\newtheorem{theorem}{Theorem}[section]
\newtheorem{lemma}[theorem]{Lemma}
\newtheorem{cor}[theorem]{Corollary}
\theoremstyle{definition}
\newtheorem{hypothesis}[theorem]{Hypothesis}
\newtheorem{remark}[theorem]{Remark}
\newtheorem{defn}[theorem]{Definition}
\newtheorem{example}[theorem]{Example}
\numberwithin{equation}{theorem}
\newcommand{\AAA}{\mathbb{A}}
\newcommand{\FF}{\mathbb{F}}
\newcommand{\GG}{\mathbb{G}}
\newcommand{\QQ}{\mathbb{Q}}
\newcommand{\ZZ}{\mathbb{Z}}
\newcommand{\calE}{\mathcal{E}}
\newcommand{\calF}{\mathcal{F}}
\newcommand{\calG}{\mathcal{G}}
\newcommand{\calH}{\mathcal{H}}
\newcommand{\calI}{\mathcal{I}}
\newcommand{\calO}{\mathcal{O}}
\newcommand{\frako}{\mathfrak{o}}
\newcommand{\be}{\mathbf{e}}
\newcommand{\bv}{\mathbf{v}}
\newcommand{\dual}{\vee}
\newcommand{\stacktag}[1]{\cite[\href{https://stacks.math.columbia.edu/tag/#1}{Tag #1}]{stacks-project}}
\DeclareMathOperator{\FIsoc}{\mathbf{F-Isoc}}
\DeclareMathOperator{\FaIsoc}{\mathbf{F}^a\mathbf{-Isoc}}
\DeclareMathOperator{\Aut}{Aut}
\DeclareMathOperator{\coker}{coker}
\DeclareMathOperator{\cts}{cts}
\DeclareMathOperator{\Cts}{Cts}
\DeclareMathOperator{\End}{End}
\DeclareMathOperator{\Ext}{Ext}
\DeclareMathOperator{\FEt}{\mathbf{FEt}}
\DeclareMathOperator{\Frac}{Frac}
\DeclareMathOperator{\Gal}{Gal}
\DeclareMathOperator{\GL}{GL}
\DeclareMathOperator{\Hom}{Hom}
\DeclareMathOperator{\Isoc}{\mathbf{Isoc}}
\DeclareMathOperator{\perf}{perf}
\DeclareMathOperator{\PhiIsoc}{\mathbf{\Phi}-\mathbf{Isoc}}
\DeclareMathOperator{\PhiaIsoc}{\mathbf{\Phi}^\mathit{a}-\mathbf{Isoc}}
\DeclareMathOperator{\PhibIsoc}{\mathbf{\Phi}^\mathit{b}-\mathbf{Isoc}}
\DeclareMathOperator{\rank}{rank}
\DeclareMathOperator{\rig}{rig}
\DeclareMathOperator{\Spec}{Spec}
\DeclareMathOperator{\Spf}{Spf}
\begin{document}

\title{Drinfeld's lemma for $F$-isocrystals, I}
\author{Kiran S. Kedlaya}
\address{Department of Mathematics, University of California San Diego, La Jolla, CA 92093, USA}
\email{kedlaya@ucsd.edu}
\date{30 Jan 2024}
\thanks{The author was supported by NSF grants DMS-1802161 and DMS-2053473, the UCSD Warschawski Professorship, and a Simons Foundation Fellowship for the 2023--24 academic year. 
The author also benefited from the hospitality of MSRI (NSF grant DMS-1928930) during January 2023, CIRM (Luminy) during February 2023, HRIM (Bonn) during June--August 2023, and IAS (Princeton) during September--December 2023. Thanks to Daxin Xu for helpful feedback.}

\begin{abstract}
We prove that in either the convergent or overconvergent setting, an absolutely irreducible $F$-isocrystal on the absolute product of two or more smooth schemes over perfect fields of characteristic $p$, further equipped with actions of the partial Frobenius maps, is an external product of $F$-isocrystals over the multiplicands. The corresponding statement for lisse $\overline{\QQ}_\ell$-sheaves, for $\ell \neq p$ a prime, is a consequence of Drinfeld's lemma on the fundamental groups of absolute products of schemes in characteristic $p$.
The latter plays a key role in V. Lafforgue's approach to the Langlands correspondence for reductive groups with $\ell$-adic coefficients; the $p$-adic analogue will be considered in subsequent work with Daxin Xu.
\end{abstract}

\maketitle

The term ``Drinfeld's lemma'' is used to refer to various statements in a line of inquiry 
arising from Drinfeld's geometric approach to the Langlands correspondence for the group $\GL(2)$ over a global function field of characteristic $p>0$ \cite{drinfeld}. This line of inquiry concerns the geometry of schemes in characteristic $p$, and specifically the effect of taking an absolute product and then forming a (categorical) quotient by the action of some ``partial Frobenius'' maps. For comprehensive introductions to this topic, see \cite{lau} or \cite[Lecture~4]{kedlaya-aws}; the latter includes some related statements for perfectoid spaces (with some details deferred to \cite{ckz} and \cite{kedlaya-simpleconn}).

To give a concrete example of a statement in this circle of ideas, let $X_1, X_2$ be two connected $\FF_p$-schemes and set $X := X_1 \times_{\FF_p} X_2$. The scheme $X$ carries two endomorphisms $\varphi_1, \varphi_2$ obtained by base extension of the absolute Frobenius maps on $X_1, X_2$, respectively. We then have a canonical isomorphism
\begin{equation} \label{eq:drinfeld isom}
\pi_1(X/\varphi_2, \overline{x}) \to \pi_1(X_1, \overline{x}) \times \pi_1(X_2, \overline{x})
\end{equation}
of (profinite) \'etale fundamental groups, where $X/\varphi_2$ denotes the categorical quotient (so that finite \'etale covers of $X/\varphi_2$ correspond to finite \'etale covers of $X$ equipped with an isomorphism with their $\varphi_2$-pullback) and $\overline{x} \to X/\varphi_2$ is a geometric basepoint. (For a proof at this level of generality, see \cite{kedlaya-aws}; see also \cite{lau} for more background.) A key special case is when $X_2$ is itself a geometric point, in which case the conclusion is that $\pi_1(X_1, \overline{x}) \cong \pi_1(X/\varphi_2, \overline{x})$.

The statement given above immediately implies some corollaries for categories of lisse \'etale $\overline{\QQ}_{\ell}$-sheaves, for any prime $\ell \neq p$, using the identification of such sheaves with representations of the \'etale fundamental group(oid); notably,
every absolutely irreducible object on $X/\varphi_2$ is an external product of absolutely irreducible objects on $X_1$ and $X_2$.
With a bit more effort, one can upgrade these to statements about constructible sheaves, as in
\cite[\S 8]{lafforgue-v}; these can then be used to construct \emph{excursion operators} which
play a key role in V. Lafforgue's extension of Drinfeld's construction to reductive groups \cite{lafforgue-v}. Analogous constructions involving perfectoid spaces lead to similar results for the local Langlands correspondence in mixed characteristic \cite{fargues-scholze}.

The purpose of the present paper is to introduce some analogues of the aforementioned statements with lisse  $\overline{\QQ}_{\ell}$-sheaves replaced by corresponding coefficient objects in $p$-adic cohomology. One subtlety is that there are two different, but closely related, categories that can stand in for the category of lisse sheaves: the category of \emph{convergent $F$-isocrystals}, which is easier to construct but does not fit into a six-functors formalism, and the smaller but subtler category of \emph{overconvergent $F$-isocrystals}. The rich interplay between these two categories is a defining feature both of the general theory and of our work here in particular: our overall strategy is to first prove results in the convergent category, where objects are more directly related to representations of fundamental groups, and then transfer knowledge to the overconvergent category.

Our main results are formulated for the categories of \emph{convergent $\Phi$-isocrystals} and \emph{overconvergent $\Phi$-isocrystals} on a product of smooth schemes over perfect fields of characteristic $p$; here $\Phi$ refers to a collection of commuting semilinear actions of the Frobenius maps on the individual factors. As an aid to navigation, we highlight some key points here.
\begin{itemize}
\item
Restriction of convergent $\Phi$-isocrystals to an open dense subscheme in each factor is fully faithful (Lemma~\ref{L:fully faithful}), as is restriction from overconvergent to convergent $\Phi$-isocrystals on a single scheme
(Lemma~\ref{L:fully faithful2}).
\item
One can view a $\Phi$-isocrystal as an $F$-isocrystal by retaining only the product of the Frobenius maps (which we call the \emph{diagonal Frobenius}). One can then refine various standard properties of the diagonal Frobenius: for example, the stratification by diagonal Newton polygons has a product structure
(Theorem~\ref{T:total Newton stratification}),
and a convergent $\Phi$-isocrystal with constant diagonal Newton polygon admits a slope filtration
(Theorem~\ref{T:total slope filtration}).
\item
When one factor is a geometric point, one has an analogue of the Dieudonn\'e--Manin classification theorem to separate slopes in that factor: see Theorem~\ref{T:relative DM} in the convergent case and Theorem~\ref{T:relative DM overconvergent} in the overconvergent case. 
Notably, when $X_2$ is a geometric point,
the pullback functor from $F$-isocrystals over $X_1$ to $X/\varphi_2$ is \emph{not} an equivalence of categories: the essential image of the pullback functor consists of those objects for which $\varphi_2$ acts with all slopes equal to 0.
\item
By repeatedly applying the previous result, we obtain a Dieudonn\'e--Manin decomposition for $\Phi$-isocrystals over a product of geometric points (Corollary~\ref{C:relative DM}). This allows us to construct a \emph{joint Newton polygon stratification} and to produce slope filtrations when the joint Newton polygon is constant (Corollary~\ref{C:partial slope filtration}).
\item
Our main result is that irreducible $\Phi$-isocrystal appears as a constituent of an external product of irreducible $\Phi$-isocrystals, in both the convergent case (Theorem~\ref{T:convergent decomposition}) and the overconvergent case
(Theorem~\ref{T:product overconvergent}).
\end{itemize}

We conclude with a note on coming attractions.
Our motivation for considering Drinfeld's lemma for $F$-isocrystals is the desire to extend  Abe's proof of the Langlands correspondence for $\mathrm{GL}(n)$ with $p$-adic coefficients \cite{abe-companion}, based on the work of L. Lafforgue \cite{lafforgue-l},
to handle reductive groups in the style of \cite{lafforgue-v}. However, this requires some additional preparation, including a reformulation in the language of Tannakian fundamental groups more in the style of \eqref{eq:drinfeld isom};
the latter can be found in a joint paper with Daxin Xu \cite{kedlaya-xu}.

\section{Local Drinfeld's lemma for schemes}
\label{sec:local Drinfeld schemes}

We first state the ``local'' version of Drinfeld's lemma for schemes, in which we consider a product with a geometric point. Here we follow \cite{lau} and \cite[Lecture~4]{kedlaya-aws},
except that the latter treats only finite \'etale coverings and not open immersions.

\begin{defn}
Throughout \S\ref{sec:local Drinfeld schemes},
let $X$ be a scheme over $\FF_p$.
Let $k$ be an algebraically closed field of characteristic $p$.
Put $X_k := X \times_{\FF_p} k$ and let $\varphi_k\colon X_k \to X_k$ be the product of the identity map on $X$ with the absolute Frobenius map on $\Spec k$. 

Let $\FEt(X)$ denote the category of finite \'etale schemes over $X$. We use similar notation for categorical quotients; for example, $\FEt(X_k/\varphi_k)$ will denote the category of finite \'etale schemes over $X_k$ equipped with isomorphisms with their $\varphi_k$-pullbacks.
\end{defn}

\begin{lemma} \label{L:components of product}
The base extension functor
$\FEt(X) \to \FEt(X_k/\varphi_k)$ is an equivalence of categories, with quasi-inverse given by taking $\varphi_k$-invariants.
\end{lemma}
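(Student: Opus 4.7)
The functor sends $Y \in \FEt(X)$ to $Y_k := Y \times_{\FF_p} k$, together with the canonical isomorphism $\varphi_k^* Y_k \cong Y_k$ coming from the fact that $Y$ is pulled back from $\Spec \FF_p$ (on which $\Frob$ acts as the identity). By Zariski gluing one reduces to $X = \Spec R$ affine, and the statement becomes: the base extension functor $B \mapsto (B \otimes_{\FF_p} k,\, 1 \otimes \Frob)$ from $\FEt(R)$ to the category of pairs $(A, \sigma)$---with $A$ finite \'etale over $R_k := R \otimes_{\FF_p} k$ and $\sigma$ a $(1 \otimes \Frob)$-semilinear ring automorphism---is an equivalence with quasi-inverse $(A, \sigma) \mapsto A^\sigma$.

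Full faithfulness is the easier half: any $\varphi_k$-equivariant ring homomorphism $B' \otimes_{\FF_p} k \to B \otimes_{\FF_p} k$ must carry $(1 \otimes \Frob)$-invariants to $(1 \otimes \Frob)$-invariants. Since $(B \otimes_{\FF_p} k)^{1 \otimes \Frob} = B \otimes_{\FF_p} k^{\Frob = 1} = B \otimes_{\FF_p} \FF_p = B$, and similarly for $B'$, the homomorphism is uniquely determined by its restriction $B' \to B$, yielding the bijection $\Hom_R(B', B) \cong \Hom^{\varphi_k}(B'_k, B_k)$.

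For essential surjectivity, given $(A, \sigma)$, I would show that $B := A^\sigma$ is finite \'etale over $R$ and that $B \otimes_R R_k \to A$ is an isomorphism, by reducing to classical finite Galois descent along the cover $\Spec(R \otimes_{\FF_p} \FF_{p^n}) \to \Spec R$. First, standard noetherian approximation reduces to $R$ of finite type over $\FF_p$. Then combine two descent inputs: (i) the invariance of the \'etale fundamental group under the extension of algebraically closed base fields $\overline{\FF_p} \hookrightarrow k$ descends $A$ (together with $\sigma$) to some $A'' \in \FEt(R \otimes_{\FF_p} \overline{\FF_p})$ equipped with a $(1 \otimes \Frob_{\overline{\FF_p}})$-semilinear automorphism $\sigma''$; (ii) compatibility of finite \'etale algebras with the filtered colimit $\overline{\FF_p} = \mathrm{colim}_n \FF_{p^n}$ descends $(A'', \sigma'')$ further to $(A', \sigma')$ over $R \otimes_{\FF_p} \FF_{p^n}$ for some $n$. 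Since $A'$ has finite automorphism group over $R \otimes_{\FF_p} \FF_{p^n}$, by enlarging $n$ if necessary we can arrange $(\sigma')^n = \mathrm{id}_{A'}$; then $\sigma'$ generates a cyclic action of $\Gal(\FF_{p^n}/\FF_p)$ on $A'$ by $R$-algebra automorphisms compatible with the Galois action on $\FF_{p^n}$---i.e., a Galois descent datum. Classical Galois descent then produces $B := (A')^{\sigma'} \in \FEt(R)$ with $B \otimes_{\FF_p} \FF_{p^n} \cong A'$, and base-changing up $\FF_{p^n} \hookrightarrow k$ gives $B \otimes_{\FF_p} k \cong A$ equivariantly; finally, $(A')^{\sigma'} = A^\sigma$ because $\sigma$ acts on $B \otimes_{\FF_p} k = A$ as $\mathrm{id}_B \otimes \Frob_k$, whose invariants are $B \otimes_{\FF_p} \FF_p = B$.

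The main obstacle is orchestrating the two descent steps (i) and (ii) and checking that $\sigma$ descends compatibly at each stage; the $\pi_1$-invariance under $\overline{\FF_p} \hookrightarrow k$ and the filtered-colimit presentation of $\FEt(R \otimes \overline{\FF_p})$ are standard in isolation, but combining them with the descent of $\sigma$ to finite order on some $\FF_{p^n}$ is the key conceptual step. Once one is in the finite Galois setting, the remainder is classical.
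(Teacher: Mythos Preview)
Your full-faithfulness argument is fine, but the essential-surjectivity argument has a real gap at step~(i). You invoke ``invariance of the \'etale fundamental group under the extension of algebraically closed base fields $\overline{\FF_p} \hookrightarrow k$'' to descend $A$ from $R \otimes_{\FF_p} k$ to $R \otimes_{\FF_p} \overline{\FF_p}$. That invariance is \emph{false} in characteristic $p$ for non-proper schemes: already for $R = \FF_p[t]$ (so $R \otimes_{\FF_p} \overline{\FF_p} \cong \AAA^1_{\overline{\FF_p}}$), the Artin--Schreier cover $y^p - y = ct$ with $c \in k \setminus \overline{\FF_p}$ is a finite \'etale cover of $\AAA^1_k$ that does not descend to $\AAA^1_{\overline{\FF_p}}$. (What is true in general is that $\pi_1(X_K) \to \pi_1(X_{\overline{k}})$ is surjective, and an isomorphism on the prime-to-$p$ quotient; the full isomorphism requires properness.) Your later steps (ii) and the finite Galois descent are sound in isolation, but they rest on (i), and the limit argument you sketch cannot replace (i) either: writing $k$ as a colimit of finitely generated $\FF_p$-subalgebras $k_0$ does descend $A$, but those $k_0$ are not finite fields, and specializing $k_0$ to a closed point changes $A$ rather than recovering it.

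The point you are missing is that one must use the $\varphi_k$-structure $\sigma$ \emph{in the descent step itself}, not merely carry it along. In the example above, the cover $y^p-y=ct$ admits a $\varphi_k$-structure only when $c \in \FF_p$, which is precisely when it descends. The paper simply cites \cite[Lemma~4.2.6]{kedlaya-aws}; the argument there proceeds by first establishing descent for \emph{coherent sheaves} equipped with a $\varphi_k$-structure (Lemma~4.2.2 of that reference, also invoked in the present paper's Lemma~\ref{L:open immersion Drinfeld1}), essentially a Lang-type statement that a $(1\otimes\Frob_k)$-semilinear automorphism of a finite projective $R_k$-module admits an $R$-basis of invariants. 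Once one has this for the underlying module and algebra structure, the finite-\'etale property descends by fpqc descent. Your steps (ii) and the Galois descent can in fact be repurposed to give an alternative proof of that module-level statement, but only after one has landed over a finite field---and getting there requires the $\sigma$-equivariant descent you have not supplied.
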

\begin{proof}
See \cite[Lemma~4.2.6]{kedlaya-aws}.
\end{proof}

\begin{lemma} \label{L:open immersion Drinfeld1}
Let $U_k \to X_k$ be a quasicompact open immersion whose image is stable under the action of $\varphi_k$ (meaning that $U_k \times_{X_k,\varphi_k} X_k \to X_k$
factors through $U_k \to X_k$ via an isomorphism). Then 
$U_k = U \times_{\FF_p} k$ for some open subset $U$ of $X$.
\end{lemma}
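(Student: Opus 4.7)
The plan is to realize $U_k$ as $\pi^{-1}(U)$ for some open $U \subseteq X$, where $\pi \colon X_k \to X$ is the projection. Since $\pi$ is fpqc (a base change of $\Spec k \to \Spec \FF_p$), openness of subsets of $X$ can be checked after pullback along $\pi$; hence it suffices to show that $U_k$ is $\pi$-saturated, meaning $U_k = \pi^{-1}(\pi(U_k))$, and then take $U := \pi(U_k)$.

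Saturation reduces to a fiberwise statement: for each $x \in X$, the intersection $U_k \cap F_x$ of $U_k$ with the fiber $F_x := \Spec(\kappa(x) \otimes_{\FF_p} k)$ is either empty or all of $F_x$. Since $\varphi_k$ acts trivially on the first factor of $X_k$, the fiber $F_x$ is $\varphi_k$-stable and $U_k \cap F_x$ is a $\varphi_k$-stable quasicompact open subset of $F_x$. Thus the proof reduces to the following fiber claim: for any field $L$ of characteristic $p$, the only $\varphi := 1 \otimes \Frob_k$-stable quasicompact open subsets of $\Spec(L \otimes_{\FF_p} k)$ are $\emptyset$ and the whole spectrum.

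For the fiber claim, I exploit algebraic closedness of $k$ through the Artin--Schreier exact sequence $0 \to \FF_p \to k \xrightarrow{\Frob_k - 1} k \to 0$; tensoring over $\FF_p$ with $L$ yields $0 \to L \to R \xrightarrow{\varphi - 1} R \to 0$, where $R := L \otimes_{\FF_p} k$. So $R^{\varphi} = L$ and $\varphi - 1$ is surjective on $R$. Let $I \subseteq R$ be the $\varphi$-stable radical ideal cutting out the closed complement (finitely generated by quasicompactness). Since $L$ is a field, $I \cap L$ is either $L$ (giving $I = R$, i.e., the open equals all of $F_x$) or $0$, in which case we must argue $I = 0$. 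For this remaining case, applying the snake lemma to $0 \to I \to R \to R/I \to 0$ with $\varphi - 1$ acting, combined with its surjectivity on $R$, produces an exact sequence $0 \to L \to (R/I)^{\varphi} \to I/(\varphi - 1)I \to 0$; a spreading-out argument descending $I$ to $L \otimes_{\FF_p} k_0$ for a finitely generated subfield $k_0 \subseteq k$ (where $\Frob$-orbits on $\Spec(L \otimes_{\FF_p} k_0)$ have controllable structure) then forces $I = 0$.

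The main obstacle is the last step of the fiber claim: ruling out $\varphi$-stable nonzero ideals $I$ with $I \cap L = 0$ requires carefully combining the snake-lemma output with the fact that $R/I$ is an $L$-algebra generated by the image of $k$, on which $\varphi$ acts as Frobenius. The saturation reduction and the Artin--Schreier setup are formal; the heart of the argument lies in translating the cohomological/Artin--Schreier information back into concrete control over $\varphi$-stable ideals, where finite generation of $I$ from quasicompactness of $U_k$ plays an essential role.
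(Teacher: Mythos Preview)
Your reduction to fibers is correct and is a genuinely different organizational principle from the paper's approach: the paper instead reduces $X$ to finite type over $\FF_p$ (by spreading out the generators $f_i$ and the matrices witnessing $\varphi_k$-stability), then compactifies $X$ projectively and invokes a descent result for $\varphi_k$-equivariant coherent sheaves \cite[Lemma~4.2.2]{kedlaya-aws}. Your saturation argument via fpqc descent of open subsets is clean, and the fiberwise formulation is appealing.

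The gap is in your proof of the fiber claim. The snake-lemma exact sequence $0 \to L \to (R/I)^{\varphi} \to I/(\varphi-1)I \to 0$ is correct but you do not explain how it forces $I=0$; nothing in it visibly bounds the size of $I$ or of $(R/I)^\varphi$. Your spreading-out step is also problematic: descending to a finitely generated subfield $k_0 \subset k$ destroys the algebraic closedness that made $\varphi-1$ surjective (Artin--Schreier) in the first place, so the only tool you have set up no longer applies. The phrase ``$\Frob$-orbits have controllable structure'' is not an argument. Finally, note that the radical ideal $I$ need not be finitely generated even though $J = (f_1,\dots,f_n)$ is, so some care is needed about which ideal you are spreading out.

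More structurally: your fiber claim is exactly the special case $X = \Spec L$ of the lemma itself, and nothing about $L$ being a field makes that case easier than the general case. To prove it you would most naturally spread out in the $L$-direction (not the $k$-direction) to reduce to $L$ of finite type over $\FF_p$ --- but that is precisely the paper's reduction, after which one still needs the compactification and coherent descent input. So your fiberwise reorganization, while valid, does not bypass the substantive step; it only relocates it.
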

\begin{proof}
Since $U$ is uniquely determined by $U_k$ if it exists, we may assume at once that $X = \Spec R$ is affine.
Since $U_k$ is quasicompact, its complement is the zero locus of some finitely generated ideal $I = (f_1,\dots,f_n)$ in $R_k := R \otimes_{\FF_p} k$. The fact that $U_k$ is stable under the action of $\varphi_k$ means that there exist a positive integer $m$ and some $n \times n$ matrices $A,B$ over $R_k$ such that
\[
\varphi_k(f_j)^{p^m} = \sum_i A_{ij} f_i, \qquad
f_j^{p^m} = \sum_i B_{ij} \varphi_k(f_i).
\]
Since we can describe $A,B,f_i, \varphi_k(f_i)$ using finitely many elements of $R_k$, we can
find a finitely generated $\FF_p$-subalgebra $R_0$ of $R$ for which $U_k$ arises by pullback from
$\Spec R_0$ to $\Spec R$. That is, we may assume that $X$ is of finite type over $\FF_p$.

Since $X$ is affine and of finite type over $\FF_p$, we may choose a projective compactification $Y$ of $X$.
Let $Z_k$ be the complement of $X_k$ in $Y_k$, viewed as a reduced closed subscheme.
Let $\calI$ be the ideal sheaf cutting out $Z_k$ in $Y_k$; then $\calI$ is isomorphic to its $\varphi_k$-inverse image, so \cite[Lemma~4.2.2]{kedlaya-aws} implies that $\calI$ is the pullback of a coherent sheaf on $Y$.
This immediately yields the desired result.
\end{proof}

\begin{remark}
We do not know if Lemma~\ref{L:open immersion Drinfeld1} remains true without the hypothesis that $U_k \to X_k$ is quasicompact.
This is automatic if $X$ is of finite type over $\FF_p$; in this case, one further deduces a comparison for constructible sheaves as in \cite[\S 8]{lafforgue-v}.
\end{remark}

\section{Products over $\FF_p$}

We continue with some observations about products of schemes over $\FF_p$.

\begin{lemma}  \label{L:geometrically integral product}
For $i=1,\dots,n$, let $X_i$ be a normal integral scheme over $\overline{\FF}_p$. Then
$X_1 \times_{\overline{\FF}_p} \cdots \times_{\overline{\FF}_p} X_n$ is normal and integral.
\end{lemma}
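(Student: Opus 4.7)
The plan is to induct on $n$. Associativity of fiber products lets me write
\[
X_1 \times_{\overline{\FF}_p} \cdots \times_{\overline{\FF}_p} X_n = X_1 \times_{\overline{\FF}_p} (X_2 \times_{\overline{\FF}_p} \cdots \times_{\overline{\FF}_p} X_n),
\]
so once the case $n = 2$ is in hand, the inductive hypothesis applied to $X_2, \dots, X_n$ reduces us to the two-factor case. I therefore focus on showing that if $X_1$ and $X_2$ are normal integral $\overline{\FF}_p$-schemes, then $X := X_1 \times_{\overline{\FF}_p} X_2$ is again normal and integral.

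The key observation is that both hypotheses become geometric in nature once one exploits the fact that $\overline{\FF}_p$ is algebraically closed (and in particular perfect). Specifically, an integral scheme over an algebraically closed field is automatically geometrically integral, since geometric reducedness follows from perfectness and geometric irreducibility follows from the fact that the algebraic closure of $k$ inside the function field is $k$ itself when $k = \overline{k}$. Similarly, a normal scheme over a perfect field is geometrically normal, as one sees from Serre's $R_1 + S_2$ criterion together with the fact that residue field extensions are separable over a perfect base.

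It then suffices to invoke the standard facts (both recorded in the Stacks project) that over any field $k$, the fiber product of two geometrically integral (resp.\ geometrically normal) $k$-schemes is again geometrically integral (resp.\ geometrically normal). Applied to $X_1$ and $X_2$ over $\overline{\FF}_p$, this yields that $X$ is geometrically integral and geometrically normal, and therefore in particular integral and normal.

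There is no substantive obstacle here: the argument is essentially bookkeeping once normality and integrality are recognized as automatically geometric over the algebraically closed base $\overline{\FF}_p$. The only mild task is locating the correct references; an alternative self-contained route would verify Serre's criterion directly on $X$ via flatness of the projections and a computation of local rings, but invoking the existing lemmas is considerably cleaner.
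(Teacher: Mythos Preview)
Your proposal is correct and follows essentially the same approach as the paper. The paper's proof consists solely of citing two Stacks Project tags (\stacktag{09P9} for normality and \stacktag{06DF} for integrality); your write-up simply unpacks what those citations are doing, namely that over the algebraically closed base $\overline{\FF}_p$ the properties ``integral'' and ``normal'' are automatically geometric, after which the standard product lemmas apply.
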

\begin{proof}
Apply \stacktag{09P9} and \stacktag{06DF}.
\end{proof}

\begin{cor} \label{C:geometrically integral product}
For $i=1,\dots,n$, let $X_i$ be a normal integral scheme over $\overline{\FF}_p$
and put $X := X_1 \times_{\FF_p} \cdots \times_{\FF_p} X_n$.
\begin{enumerate}
\item[(a)]
The map
\[
\pi_0(X) \to \pi_0(\overline{\FF}_p \times_{\FF_p} \cdots \times_{\FF_p} \overline{\FF}_p)
\]
is a homeomorphism. In particular, using the action of $\widehat{\ZZ}^n \cong \Gal(\overline{\FF}_p/\FF_p)^n$ on $\overline{\FF}_p \times_{\FF_p} \cdots \times_{\FF_p} \overline{\FF}_p$, we may equip $\pi_0(X)$ with the structure of a principal homogeneous space for the cokernel of the diagonal map
$\Delta\colon \widehat{\ZZ} \to \widehat{\ZZ}^n$.
\item[(b)]
Each connected component of $X$ is integral and normal.
\end{enumerate}
\end{cor}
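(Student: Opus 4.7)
The plan is to analyze the canonical surjection $f\colon X \to Y_n := \overline{\FF}_p \times_{\FF_p} \cdots \times_{\FF_p} \overline{\FF}_p$ ($n$ factors) and identify its fibers with the connected components of $X$. Writing $\overline{\FF}_p = \varinjlim_m \FF_{p^m}$ expresses $Y_n$ as the inverse limit of the finite \'etale $\FF_p$-schemes $Y_n^{(m)} := \Spec(\FF_{p^m}^{\otimes_{\FF_p} n})$, so the underlying space $|Y_n|$ is profinite: compact, Hausdorff, totally disconnected, with every point closed and having residue field $\overline{\FF}_p$. First I would describe the fibers: for $y \in Y_n$ corresponding to an $n$-tuple $(\sigma_1,\dots,\sigma_n) \in \Gal(\overline{\FF}_p/\FF_p)^n$, the fiber $X_y := X \times_{Y_n} \Spec \overline{\FF}_p$ is $T := X_1 \times_{\overline{\FF}_p} \cdots \times_{\overline{\FF}_p} X_n$ with its $\overline{\FF}_p$-structure maps twisted by the $\sigma_i$; since each $\sigma_i$ is an automorphism of $\overline{\FF}_p$, these twists are absorbed into ring isomorphisms, so $X_y$ is abstractly isomorphic to $T$ as a scheme. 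By Lemma~\ref{L:geometrically integral product}, $T$ is integral and normal; hence so is every fiber of $f$.

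Next, each connected fiber $X_y$ lies in a single component of $X$, while for $x \in X$ with $y := f(x)$, the component of $x$ sits inside $\bigcap_V f^{-1}(V) = f^{-1}(y) = X_y$ as $V$ ranges over clopen neighborhoods of $y$ in the profinite space $Y_n$ (using that $\bigcap_V V = \{y\}$ there). This produces the set-theoretic bijection $\pi_0(X) \leftrightarrow |Y_n|$ and also proves part (b): since $\{y\}$ is closed in $Y_n$, $X_y \hookrightarrow X$ is a closed immersion, and as $X_y \cong T$ is integral and normal, it coincides with the reduced closed subscheme structure on the component.

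For part (a), to upgrade the bijection to a homeomorphism, I would exploit the factorization $X \to Y_n \to Y_n^{(m)}$: since $Y_n^{(m)}$ is finite discrete, the preimages of its points decompose $X$ into a finite disjoint union $X = \bigsqcup_{z \in Y_n^{(m)}} X_z^{(m)}$ of clopens, which refines as $m$ grows. Clopens of $|Y_n|$ are exactly pullbacks of subsets of $Y_n^{(m)}$ for varying $m$; I would argue that these correspond bijectively to the clopens of $X$ that are unions of fibers of $f$ (i.e., the clopens of $\pi_0(X)$), and since clopens form a basis for $|Y_n|$, this matching delivers the homeomorphism. Finally, the coordinatewise $\widehat{\ZZ}^n = \Gal(\overline{\FF}_p/\FF_p)^n$ action on $Y_n$ restricts along the diagonal to the absolute Frobenius of the $\FF_p$-scheme $Y_n$, which is the identity on $|Y_n|$; fixing the basepoint $(\id,\dots,\id)$ then identifies $|Y_n|$ with $\widehat{\ZZ}^n/\Delta\widehat{\ZZ}$ acting on itself by translation.

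The main obstacle will be the topological step in part (a): if the $X_i$ are not quasi-compact, then $X$ need not be quasi-compact either, and one must verify carefully that every clopen of $X$ which is a union of fibers of $f$ descends to a clopen of some finite $Y_n^{(m)}$. I expect to handle this by a tube-lemma-style argument exploiting the compatibility of the decompositions $X = \bigsqcup_z X_z^{(m)}$ as $m$ varies, together with the fact that the clopen structure on $|Y_n|$ is determined entirely by these finite quotients.
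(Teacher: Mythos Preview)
Your approach is exactly the natural elaboration of the paper's one-line proof (``This follows at once from Lemma~\ref{L:geometrically integral product}''), and your treatment of (b) and of the set-theoretic bijection in (a) is correct.

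The one genuine gap is in upgrading the bijection to a homeomorphism. Even granting that every clopen of $X$ descends to some $Y_n^{(m)}$, matching the Boolean algebras of clopens only yields a homeomorphism if you \emph{also} know that clopens generate the topology on $\pi_0(X)$, and this is precisely what is not automatic without quasi-compactness. Your tube-lemma proposal runs into the same obstruction: the fiber $X_y \cong T$ need not be quasi-compact, so there is no compactness to force $\bigcap_m X^{(m)}_{z_m} \subseteq U$ down to a finite stage. A clean fix is to first treat the case where each $X_i$ is affine: then $X$ is affine, $\pi_0(X)$ is quasi-compact as a quotient of $|X|$, and a continuous bijection from a quasi-compact space to the Hausdorff space $|Y_n|$ is automatically a homeomorphism. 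For general $X_i$, observe that for any product-of-affines open $U = U_1 \times_{\FF_p} \cdots \times_{\FF_p} U_n \subseteq X$ the map $\pi_0(U) \to \pi_0(X)$ is a bijection (both sides being identified with $|Y_n|$ by your fiber argument), while the affine case gives a homeomorphism $\pi_0(U) \cong |Y_n|$. Given an open $W \subseteq \pi_0(X)$, its preimage in $|U|$ is open, hence its image in $\pi_0(U) \cong |Y_n|$ is open; but that image is exactly $g(W)$, so $g$ is open and therefore a homeomorphism.
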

\begin{proof}
This follows at once from Lemma~\ref{L:geometrically integral product}.
\end{proof}

\begin{remark} \label{R:compare geometric components}
In Corollary~\ref{C:geometrically integral product}, the action of $\widehat{\ZZ}^n$ on $\pi_0(X)$ induces isomorphisms
between the various connected components. However, the resulting isomorphism between any two given components is only 
well-defined up to composition with a power of Frobenius.
\end{remark}

In connection with the previous discussion, we mention the following lemma which we will need later.

\begin{lemma} \label{L:isomorphic representations}
Let $\ell$ be a prime.
Let $E$ be a finite extension of $\QQ_\ell$.
Let $\Cts(\widehat{\ZZ}^n, E)$ be the set of continuous maps of $\widehat{\ZZ}^n$ to $E$, viewed as a topological ring carrying the compact-open topology.
Let $G$ be a profinite group. Let $\rho\colon G \to \GL_m(\Cts(\widehat{\ZZ}^n, E))$ be a continuous homomorphism.
For each $x \in \widehat{\ZZ}^n$, let $\rho_x\colon G \to \GL_m(E)$ be the composition of $\rho$ with the evaluation-at-$x$ map on
$\Cts(\widehat{\ZZ}^n, E)$. 
Suppose that there exist isomorphisms of $\rho$ with its pullbacks along the translation map on $\widehat{\ZZ}^n$ by each of the generators.
Then the representations $\rho_x$ are pairwise isomorphic for all $x \in \widehat{\ZZ}$;
moreover, there exists a matrix $h \in \GL_m(\Cts(\widehat{\ZZ}^n, E))$ such that $\rho$ has values in $h^{-1} \GL_m(E) h$.
\end{lemma}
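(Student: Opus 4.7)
The plan is to construct the matrix $h$ by approximating modulo powers of a uniformizer $\pi$ of $\frako_E$. The first conclusion then follows automatically, since having $h(x)\rho_x(\gamma)h(x)^{-1}$ independent of $x$ gives $\rho_x \cong \rho_y$ via $h(y)^{-1}h(x)$.

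As preparation, since $G$ is profinite and $\rho$ is continuous for the compact-open topology, the image $\rho(G)$ is compact in $\GL_m(\Cts(\widehat{\ZZ}^n, E))$; in particular, the entries of $\rho(\gamma)(x)$ and $\rho(\gamma)(x)^{-1}$ are uniformly bounded, and the $\Cts(\widehat{\ZZ}^n,\frako_E)$-module generated by the $\rho(G)$-orbit of the standard lattice in $\Cts(\widehat{\ZZ}^n, E)^m$ furnishes a $\rho(G)$-stable lattice. Conjugating by a matrix sending the standard lattice to this one---to be absorbed into the final $h$---I may assume $\rho$ takes values in $\GL_m(\Cts(\widehat{\ZZ}^n, \frako_E))$. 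Now fix $k \geq 1$: the ring $\Cts(\widehat{\ZZ}^n, \frako_E/\pi^k) = \varinjlim_N \Map((\ZZ/N)^n, \frako_E/\pi^k)$ is discrete, so by compactness of $G$ the reduction $\bar\rho^{(k)}$ lands in $\GL_m(\Map((\ZZ/N_k)^n, \frako_E/\pi^k))$ for some $N_k$; that is, $\bar\rho^{(k)}_x$ depends only on $x \bmod N_k$. Reducing the hypothesized isomorphisms mod $\pi^k$ then shows $\bar\rho^{(k)}_x \cong \bar\rho^{(k)}_{x+e_i}$, and transitivity of the $\ZZ^n$-action on $(\ZZ/N_k)^n$ gives that all $\bar\rho^{(k)}_x$ are mutually isomorphic.

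I next inductively build $h^{(k)} \in \GL_m(\Cts(\widehat{\ZZ}^n, \frako_E))$ with $\rho^{(k)} := h^{(k)}\rho(h^{(k)})^{-1}$ independent of $x$ modulo $\pi^k$ and $h^{(k+1)} \equiv h^{(k)} \pmod{\pi^k}$, and take $h := \lim_k h^{(k)}$. The base case follows from the previous paragraph: choose $h^{(1)}$ piecewise on each class in $(\ZZ/N_1)^n$ so as to conjugate to a fixed representative $\sigma$. For the induction, write $\rho^{(k)}_x \equiv \sigma + \pi^k \delta_x \pmod{\pi^{k+1}}$; rewriting the hypothesis through the conjugated data $\tilde h_i(x) := h^{(k)}(x+e_i) h_i(x) h^{(k)}(x)^{-1}$ (which centralize $\sigma$ modulo $\pi^k$) realizes $\delta_x$ as a $1$-cocycle under the $\ZZ^n$-action on $M_m(\frako_E/\pi)$ by $\mathrm{Ad}(\sigma)$. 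Finding a coboundary then produces a correction $h^{(k+1)} = (1 + \pi^k u) h^{(k)}$ with the required property.

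\textbf{Main obstacle.} The crucial step is the inductive lift: showing that $\delta_x$ is a coboundary for the transitive $\ZZ^n$-action on $(\ZZ/N_{k+1})^n$ with coefficients in the finite-dimensional adjoint representation of $\sigma$. I expect the vanishing to follow from the combination of (i) the hypothesis providing actual isomorphisms $h_i$ (not merely abstract isomorphism classes), which supplies the coboundary in each translation direction, and (ii) the commutation of the $h_i$'s among themselves up to centralizer-valued terms, which enforces the global consistency needed to define $u$ coherently on all of $\widehat{\ZZ}^n$. Properly set up, this is a finite-dimensional linear algebra problem over the residue field.
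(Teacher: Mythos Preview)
Your approach is genuinely different from the paper's. The paper proceeds by induction on $m$ (the dimension of the representation), using Brauer--Nesbitt to pin down the semisimplification, a Schur/compactness argument when the $\rho_x$ are irreducible, and an $\Ext^1$ argument when they are reducible. Your approach is a successive-approximation scheme modulo powers of $\pi$, essentially a deformation-theoretic lifting argument.

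The gap is exactly where you flagged it, and it is real. Unpack the inductive step: after conjugating by $h^{(k)}$, the conjugated translation isomorphisms $\tilde h_i(x)$ centralize $\sigma$ modulo $\pi^k$; writing $\tilde h_i(x) \equiv C_i(x) + \pi^k W_i(x)$ with $C_i(x)\in Z(\sigma)$, the intertwining relation at level $k+1$ yields
\[
[\delta_{x+e_i}] \;=\; \mathrm{Ad}(C_i(x))\,[\delta_x] \qquad \text{in } H^1(G,\mathrm{ad}\,\sigma),
\]
since the commutator term $[W_i(x),\sigma(\cdot)]$ is already a coboundary. When $\sigma$ is absolutely irreducible this is fine: $Z(\sigma)$ is scalars, $\mathrm{Ad}(C_i(x))$ acts trivially, and you get $[\delta_x]=[\delta_0]$ for all $x$, whence the required coboundary. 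But when $\sigma$ is reducible, $\mathrm{Ad}(C_i(x))$ can act nontrivially on $H^1(G,\mathrm{ad}\,\sigma)$ (e.g.\ for $\sigma=\chi_1\oplus\chi_2$ the diagonal element $(a,d)$ scales the off-diagonal piece $H^1(G,\chi_1\chi_2^{-1})$ by $a/d$), so nothing forces $[\delta_x-\delta_0]=0$. Your point (ii), that the $h_i$ commute up to centralizer, is automatic and gives no extra constraint at this level. Concretely, the obstruction is that an element of $Z(\sigma \bmod \pi^k)$ produced by composing the $\tilde h_i$ need not lift to $Z(\rho_0^{(k)} \bmod \pi^{k+1})$, so an isomorphism $\rho_x^{(k)}\cong\rho_0^{(k)}$ need not be realizable by a matrix congruent to $1$ modulo $\pi^k$. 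Thus a given $h^{(k)}$ can simply be a dead end, admitting no lift to $h^{(k+1)}$.

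This is precisely why the paper splits off the reducible case and treats it by a separate argument about $\Ext^1$: it chooses a $G$-stable filtration preserved by the $\iota_i$, applies the inductive hypothesis to the graded pieces, and then shows the induced continuous $\ZZ^n$-equivariant map $\widehat{\ZZ}^n\to\Ext^1_{\cts}(V_2,V_1)$ into a Hausdorff $E$-vector space must be constant. If you want to salvage the $\pi$-adic approximation idea, you will likely need to feed in a similar dichotomy rather than hope for a uniform cohomological vanishing.
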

\begin{proof}
The character of $\rho$ corresponds to a function $G \to \Cts(\widehat{\ZZ}^n, E)$ which is continuous and $\ZZ^n$-equivariant, and hence factors through $E$;
by the Brauer--Nesbitt theorem \cite[Corollary~XVII.3.8]{lang}, the representations $\rho_x$ must have pairwise isomorphic semisimplifications.

We now proceed by induction on $m$,
with the base case being when the $\rho_x$ are all irreducible.
In this case, we already know that the $\rho_x$ are pairwise isomorphic.
By Schur's lemma, $F := \End(\rho_0)$ is a division algebra containing $E$.
Define the quotient $Q := (E^m - \{0\})/ F^\times$. For each $x \in \widehat{\ZZ}^n$, the isomorphisms between $\rho_x$ and $\rho_0$
form an $F^\times$-torsor; we then obtain a unique element $f_x \in Q$ by taking the standard basis of $E^m$ (viewed as the target of $\rho_x$) and collecting its images under all isomorphisms with $\rho_0$.
By continuity of the matrix entries, we can find a neighborhood $U$ of $0$ in $\widehat{\ZZ}^n$
such that $E^m$ admits a lattice $T$ stable under $\rho_x$ for all $x \in U$ (compare \cite[Proposition~1.15]{bckl}).
Using this lattice, we may deduce that the $f_x$ define a continuous map $U \to Q$;
after possibly shrinking $U$ further, we may lift to obtain an element $h \in \GL_m(\Cts(U, E))$ such that $\rho|_U$ has values in $h^{-1} \GL_m(E) h$. By translation, we may obtain a similar conclusion with $U$ taken to be a neighborhood of any fixed element of $\widehat{\ZZ}^n$; by compactness, we can cover $\widehat{\ZZ}^n$ with finitely many such neighborhoods. We thus obtain an element $h \in \GL_m(\Cts(U, E))$ such that $\rho|_U$ has values in $h^{-1} \GL_m(R) h$ where $R$ is the ring of locally constant functions $\widehat{\ZZ}^n \to E$; once again using the isomorphisms of $\rho_0$ with $\rho_x$, we may further adjust $h$ 
to ensure that $\rho|_U$ has values in $h^{-1} \GL_m(E) h$.

For the induction step, suppose that the $\rho_x$ are all reducible. 
Let $\iota_1,\dots,\iota_n$ be the specified isomorphisms of $\rho$ with its pullbacks along translations by the standard generators of $\widehat{\ZZ}^n$.
On account of the induction hypothesis,
it suffices to verify the following:
if there is a short exact sequence
\[
0 \to V_1 \to E^m \to V_2 \to 0
\]
which is stable under the action of $\rho_x$ for all $x$ and preserved by $\iota_1,\dots,\iota_n$, 
and such that the induced maps $\rho_i\colon G \to \Cts(\widehat{\ZZ}^n, \GL(V_i))$ factor through
the subgroup of constant maps in $\Cts(\widehat{\ZZ}^n, \GL(V_i))$, then the induced
map $\widehat{\ZZ}^n \to \Ext^1_{\cts}(V_2, V_1)$ must be constant.
To check this, let $T_1, T_2$ be $G$-stable lattices in $V_1, V_2$, so that $\Ext^1_{\cts}(V_2, V_1) = \Ext^1_{\cts}(T_2, T_1) \otimes_{\frako_E} E$.
Note that $\Ext^1_{\cts}(T_2, T_1)$ is torsion-free (because $H^1_{\cts}(G, \frako_E) = \Hom_{\cts}(G, \frako_E)$ is)
and Hausdorff (because $\Hom(T_2, T_1)$ is complete).
Hence the map $\widehat{\ZZ}^n \to \Ext^1_{\cts}(V_2, V_1)$ is a continuous, $\ZZ^n$-equivariant map into a Hausdorff topological $E$-vector space, and hence must be constant.
\end{proof}

\section{Drinfeld's lemma for schemes}
\label{sec:Drinfeld schemes}

We continue with the global version of Drinfeld's lemma for schemes.

\begin{defn}
Throughout \S\ref{sec:Drinfeld schemes},
for $i=1,\dots,n$, let $X_i$ be a connected scheme over $\FF_p$.
Put $X = X_1 \times_{\FF_p} \cdots \times_{\FF_p} X_n$.
For $i=1,\dots,n$, let $\varphi_i\colon X \to X$ be the map that acts by Frobenius on the $i$-th factor and the identity on the other factors; note that these maps commute pairwise.
\end{defn}

\begin{theorem} \label{T:Drinfeld}
Suppose that $X_1,\dots,X_n$ are connected and qcqs, and fix a geometric point $\overline{x} \to X$. 
Then we have a canonical isomorphism of topological groups
\[
\pi_1(X/\Phi, \overline{x}) \cong \pi_1(X_1/\varphi_1, \overline{x}) \times_{\pi_1(\FF_p/\varphi, \overline{x})} \cdots \times_{\pi_1(\FF_p/\varphi, \overline{x})}
\pi_1(X_n/\varphi_n, \overline{x})
\]
where $\Phi$ denotes the group of automorphisms of $X$ generated by $\varphi_1,\dots,\varphi_n$.
\end{theorem}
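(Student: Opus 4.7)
The plan is to prove the theorem by reducing to the local Drinfeld's lemma (Lemma~\ref{L:components of product}) using the standard strategy for such statements (see \cite{lau} and \cite[Lecture~4]{kedlaya-aws}). First I construct the comparison map $\alpha$ from $\pi_1(X/\Phi)$ to the fiber product by functoriality: for each $i$, the structure map $X \to X_i$ intertwines $\varphi_i$ on $X$ with the absolute Frobenius on $X_i$ and is compatible with $\varphi_j$ for $j \neq i$ (which acts trivially on the $i$-th factor), yielding pullback functors $\FEt(X_i/\varphi_i) \to \FEt(X/\Phi)$ and hence group homomorphisms $\pi_1(X/\Phi) \to \pi_1(X_i/\varphi_i)$; the compositions to $\pi_1(\Spec \FF_p/\Frob)$ coincide because absolute Frobenius is natural in the scheme, so $\alpha$ lands in the claimed fiber product.

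Next I reduce to the case $n = 2$ by induction. Grouping $X' := X_1 \times_{\FF_p} \cdots \times_{\FF_p} X_{n-1}$ with its partial Frobenius group $\Phi'$ and applying the $n = 2$ result to $X = X' \times_{\FF_p} X_n$ yields an isomorphism $\pi_1(X/\Phi) \cong \pi_1(X'/\Phi') \times_{\pi_1(\Spec \FF_p/\Frob)} \pi_1(X_n/\varphi_n)$; the inductive hypothesis then unfolds $\pi_1(X'/\Phi')$ into the remaining factors.

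For the $n = 2$ case, fix a geometric point $\bar x_2 \colon \Spec \overline{\FF}_p \to X_2$. The fiber $X \times_{X_2} \bar x_2$ is canonically $X_1 \times_{\FF_p} \overline{\FF}_p$, which is exactly the setup of Lemma~\ref{L:components of product} with $X$ there equal to $X_1$ and $k = \overline{\FF}_p$; the restriction of $\varphi_2$ to this fiber is precisely the $\varphi_k$ of that lemma. Applying the lemma gives $\FEt(X_1) \simeq \FEt((X_1 \times_{\FF_p} \overline{\FF}_p)/\varphi_2)$, and carrying along the $\varphi_1$-structure (which restricts to the absolute Frobenius on the $X_1$-factor) upgrades this to $\FEt(X_1/\varphi_1) \simeq \FEt((X_1 \times_{\FF_p} \overline{\FF}_p)/\Phi)$. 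Restriction $Y \mapsto Y \times_{X_2} \bar x_2$ thus realizes $\pi_1(X/\Phi) \to \pi_1(X_1/\varphi_1)$ at the categorical level, and symmetrically for $X_2$. In the reverse direction, the external product $(Y_1, Y_2) \mapsto Y_1 \times_{\FF_p} Y_2$ sends compatible pairs of equivariant covers to objects of $\FEt(X/\Phi)$.

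The main obstacle is essential surjectivity: showing that every $\Phi$-equivariant finite étale cover $Y \to X$ is captured (up to isomorphism) by the pair of restrictions $Y_1 := Y \times_{X_2} \bar x_2$ and $Y_2 := Y \times_{X_1} \bar x_1$, properly descended via Lemma~\ref{L:components of product}. This requires comparing $Y$ to the external product $Y_1 \times_{\FF_p} Y_2$ as $\Phi$-equivariant covers, tracking connected components using Corollary~\ref{C:geometrically integral product} (so that connectedness of the $Y_i$ over $\FF_p$ translates correctly through the $\FF_p$-product), and matching Galois groups modulo the base compatibility imposed by the $\pi_1(\Spec \FF_p/\Frob)$ fiber product.
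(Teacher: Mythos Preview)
The paper does not give a self-contained proof here: it simply cites \cite[Theorem~4.2.12, Remark~4.2.15]{kedlaya-aws} and \cite[Lemme~8.11]{lafforgue-v}. Your outline follows exactly the strategy of those references (construct the comparison map by functoriality, reduce to $n=2$ by induction, and invoke the local case of Lemma~\ref{L:components of product} on geometric fibers), so in that sense you are on the same track as the cited literature.

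That said, your proposal is not yet a proof: you explicitly flag essential surjectivity as ``the main obstacle'' and then describe in words what must be checked without carrying it out. This is precisely the nontrivial content of Drinfeld's lemma. Knowing the restrictions $Y_1$ and $Y_2$ to the two geometric fibers does not by itself reconstruct $Y$; one must show that the $\Phi$-equivariant structure forces $Y$ to be (a union of components of) $Y_1 \times_{\FF_p} Y_2$. The standard way to close this gap is to first treat the case where one factor is a geometric point (which you have, via Lemma~\ref{L:components of product}), and then for general $X_2$ argue fiberwise: for each geometric point $\bar x_2 \to X_2$, the restriction of $Y$ to $X_1 \times_{\FF_p} \bar x_2$ descends to a cover of $X_1$, and one must show this cover is \emph{independent of $\bar x_2$} up to isomorphism and that the resulting family is locally constant. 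This uses the qcqs hypothesis (to get finiteness/spreading-out) and a specialization argument for finite \'etale covers. Your final paragraph gestures at this but does not supply it; without it, the argument is incomplete. If you intend to write out the proof rather than cite it, that step is where the work lies.
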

\begin{proof}
See \cite[Theorem~4.2.12, Remark~4.2.15]{kedlaya-aws}.
Alternatively, see \cite[Lemme~8.11]{lafforgue-v} for a reformulation that also applies when the $X_i$ are not required to be qcqs.
\end{proof}

We formulate a corollary in terms of $\ell$-adic sheaves that will serve as a model for our $p$-adic results (Theorems~\ref{T:convergent decomposition} and~\ref{T:product overconvergent}).
\begin{lemma} \label{L:external product decomposition}
Let $\ell$ be a prime.
Let $G_1,\dots,G_n$ be profinite groups and put $G := G_1 \times \cdots \times G_n$.
Let $\rho\colon G \to \GL(V)$ be a continuous irreducible linear representation  on a finite-dimensional $\overline{\QQ}_\ell$-vector space. Then there exist continuous irreducible linear representations $\rho_i\colon G_i \to \GL(V_i)$ on finite-dimensional $\overline{\QQ}_\ell$-vector spaces for which $\rho \cong \rho_1 \boxtimes \cdots \boxtimes \rho_n$.
\end{lemma}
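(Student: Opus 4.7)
The plan is to proceed by induction on $n$, which reduces immediately (by grouping $G_2 \times \cdots \times G_n$ into a single profinite factor) to the case $n=2$. For $n=2$ I would run the standard external-tensor-product argument: restrict $\rho$ to $G_1$, show that $V|_{G_1}$ is isotypic, and then invoke Schur's lemma over the algebraically closed field $\overline{\QQ}_\ell$ to extract a canonical tensor factorization.

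In more detail: by finite-dimensionality, $V$ contains a simple $G_1$-subrepresentation $V_1$ (any nonzero $G_1$-stable subspace of minimal dimension works), and the resulting representation $\rho_1 \colon G_1 \to \GL(V_1)$ is continuous since $V_1 \subseteq V$ is a $G_1$-stable subspace. Because $G_1$ and $G_2$ commute inside $G$, for each $g_2 \in G_2$ the operator $\rho(g_2) \in \GL(V)$ is $G_1$-equivariant, so $\rho(g_2)(V_1)$ is again simple and $G_1$-isomorphic to $V_1$. The subspace $\sum_{g_2 \in G_2} \rho(g_2)(V_1)$ is $G$-stable and nonzero, hence equals $V$ by irreducibility; consequently $V|_{G_1}$ is $V_1$-isotypic. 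Setting $V_2 := \Hom_{G_1}(V_1, V)$ with $G_2$-action $g_2 \cdot f := \rho(g_2) \circ f$, Schur's lemma applied to the absolutely irreducible $V_1$ gives $\End_{G_1}(V_1) = \overline{\QQ}_\ell$, and the evaluation map $V_1 \otimes_{\overline{\QQ}_\ell} V_2 \to V$, $v \otimes f \mapsto f(v)$, is then an isomorphism that intertwines $\rho_1 \boxtimes \rho_2$ with $\rho$. Irreducibility of $\rho_2$ is immediate, since a proper nonzero $G_2$-invariant subspace of $V_2$ would tensor up to a proper nonzero $G$-invariant subspace of $V$; continuity of $\rho_2$ follows from continuity of $\rho|_{\{1\} \times G_2} \colon G_2 \to \GL(V)$ combined with the identification of the centralizer of $\rho(G_1)$ in $\GL(V)$ with $1 \otimes \GL(V_2)$ (again Schur).

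I do not anticipate a genuine obstacle; this is the paradigmatic $\ell$-adic analogue of what Theorems~\ref{T:convergent decomposition} and~\ref{T:product overconvergent} are meant to achieve, and the whole argument rests on Schur's lemma being available over $\overline{\QQ}_\ell$. The only points requiring a moment of care are (i)~that the decomposition is honestly defined over $\overline{\QQ}_\ell$ rather than some fortuitous extension, which is ensured by algebraic closedness, and (ii)~that the $G_2$-action on $V_2$ inherits continuity from $\rho$, which follows from finite-dimensionality. Neither presents a real difficulty.
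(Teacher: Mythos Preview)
Your proof is correct and follows essentially the same approach as the paper: both reduce to $n=2$, extract an irreducible $G_1$-subrepresentation $V_1$, form the $G_2$-representation $\Hom_{G_1}(V_1,V) = (V_1^\dual \otimes V)^{G_1}$, and conclude via Schur's lemma over $\overline{\QQ}_\ell$. The only cosmetic difference is that you first establish $V_1$-isotypicity of $V|_{G_1}$ so that the evaluation map $V_1 \otimes V_2 \to V$ is visibly an isomorphism, whereas the paper instead selects an irreducible $G_2$-subrepresentation $V_2$ of the Hom space and checks that $V_1 \otimes V_2$ is itself irreducible by computing its endomorphism algebra.
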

\begin{proof}
It suffices to treat the case $n=2$, as the general case then follows by induction.
Let $\rho_1\colon G_1 \to \GL(V_1)$ be an irreducible subrepresentation of $\rho|_{G_1}$.
Then $W := (V_1^\dual \otimes V)^{G_1}$ is a finite-dimensional $\overline{\QQ}_\ell$-vector space carrying a continuous action of $G_2$ via the action on $V$ (i.e., the action on $V_1$ is taken to be trivial). Let $\rho_2\colon G_2 \to \GL(V_2)$ be an irreducible subrepresentation of $W$. 
Then $(V_2^\dual \otimes W)^{G_2} \neq 0$ and so $(V_1^\dual \otimes V_2^\dual \otimes V)^G \neq 0$.
That is, there exists a nonzero $G$-equivariant map $V_1 \otimes V_2 \to V$, which must be surjective because $V$ is irreducible.
In fact $V_1 \otimes V_2$ is also irreducible: by Schur's lemma,
\[
((V_1 \otimes V_2)^\dual \otimes (V_1 \otimes V_2))^G =(V_1^\dual \otimes V_1)^{G_1} \otimes_{\overline{\QQ}_\ell} (V_2^\dual \otimes V_2)^{G_2} =
\overline{\QQ}_\ell \otimes_{\overline{\QQ}_\ell} \overline{\QQ}_\ell  = \overline{\QQ}_\ell.
\]
Hence $V_1 \otimes V_2 \to V$ must be an isomorphism, as claimed.
\end{proof}

\begin{cor} \label{C:external product decomposition lisse}
Suppose that $X_1,\dots,X_n$ are qcqs.
Let $\ell \neq p$ be a prime.
Let $\calE$ be an irreducible object in the category of lisse $\overline{\QQ}_\ell$-sheaves on $X$ equipped with 
isomorphisms $F_i\colon \varphi_i^* \calE \cong \calE$ for $i=1,\dots,n$ that commute pairwise in the
sense that $(\varphi_j^* F_i) \circ F_j$ and $(\varphi_i^* F_j) \circ F_i$ coincide as morphisms
from $\varphi_i^* \varphi_j^* \calE \cong \varphi_j^* \varphi_i^* \calE$ to $\calE$.
Then there exist irreducible lisse $\overline{\QQ}_\ell$-sheaves $\calE_i$ on $X_i$ such that $\calE \cong \calE_1 \boxtimes \cdots \boxtimes \calE_n$.
\end{cor}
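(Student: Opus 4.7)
My plan is to translate the statement into a question about continuous $\overline{\QQ}_\ell$-representations of $\pi_1(X/\Phi, \overline{x})$, apply Theorem~\ref{T:Drinfeld} to identify this with a fiber product, and combine Lemma~\ref{L:external product decomposition} with a Clifford-theoretic extension to produce the tensor decomposition.

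First, the standard equivalence between lisse $\ell$-adic sheaves and continuous $\ell$-adic representations of fundamental groups (obtained by reducing modulo powers of $\ell$ and invoking Lemma~\ref{L:components of product} levelwise, then extending scalars to $\overline{\QQ}_\ell$) identifies $(\calE, F_1, \dots, F_n)$ with an irreducible continuous representation $\rho\colon G \to \GL(V)$ on a finite-dimensional $\overline{\QQ}_\ell$-vector space, where $G := \pi_1(X/\Phi, \overline{x})$. In the same way, lisse $\overline{\QQ}_\ell$-sheaves on $X_i$ correspond to continuous representations of $G_i := \pi_1(X_i/\varphi_i, \overline{x})$, and external products on the sheaf side correspond to tensor products restricted along the natural map $G \to \prod_i G_i$. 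Theorem~\ref{T:Drinfeld} then gives $G \cong G_1 \times_H \cdots \times_H G_n$ over $H := \pi_1(\FF_p/\varphi, \overline{x})$; setting $K_i := \ker(G_i \to H)$, the kernel of the common projection $G \to H$ is the direct product $K := K_1 \times \cdots \times K_n$, and we have a short exact sequence $1 \to K \to G \to H \to 1$.

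The core step is to decompose the irreducible $\rho$ as an external product. Restrict $\rho$ to the normal subgroup $K$: by Clifford theory, $\rho|_K$ is a sum of $H$-conjugates of a single irreducible $K$-representation $W$. Applying Lemma~\ref{L:external product decomposition} to the direct product $K = K_1 \times \cdots \times K_n$ factors $W \cong W_1 \boxtimes \cdots \boxtimes W_n$ with each $W_i$ an irreducible $K_i$-representation; uniqueness of external factorization forces each $W_i$ to inherit the $H$-stability of $W$. Each $W_i$ then extends to an irreducible continuous $G_i$-representation $V_i$, since the cohomological obstruction to such an extension lies in $H^2(H, \overline{\QQ}_\ell^\times)$, which vanishes because $H$ is pro-cyclic. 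The external product $V_1 \boxtimes \cdots \boxtimes V_n$, viewed as a $G$-representation via $G \hookrightarrow G_1 \times \cdots \times G_n$, agrees with $\rho$ up to a twist by a character of $H$; this twist can be absorbed into any single $V_i$ by pulling back a character of $H$ along $G_i \to H$. Reversing the Galois correspondence yields lisse $\overline{\QQ}_\ell$-sheaves $\calE_i$ on $X_i$ with $\calE \cong \calE_1 \boxtimes \cdots \boxtimes \calE_n$.

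The main obstacle is the Clifford-theoretic extension step: in particular, the case where the $H$-orbit of $W$ is nontrivial forces one to induce from the preimage in $G$ of the stabilizer in $H$, and tracking the character twists needed to pin down the decomposition requires care. Lemma~\ref{L:isomorphic representations}, formulated precisely to rigidify families of representations parameterized by $\widehat{\ZZ}^n$, supplies the toolkit needed to organize these extensions coherently across the fiber product structure and to verify that the final assembly recovers $\rho$ on the nose.
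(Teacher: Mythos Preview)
Your route is substantially more elaborate than the paper's, which dispatches the corollary in one line: Theorem~\ref{T:Drinfeld} presents the relevant fundamental group as a product of the $\pi_1(X_i)$ (the fiber-product formulation unwinds to a genuine direct product, as in the two-factor case \eqref{eq:drinfeld isom} and its iterates), and then Lemma~\ref{L:external product decomposition} applies immediately. No Clifford theory is needed.

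Your Clifford-theoretic approach, besides being unnecessary, has a genuine gap. The argument you actually write out only covers the case where the irreducible $K$-constituent $W$ is $H$-stable: factor $W \cong W_1 \boxtimes \cdots \boxtimes W_n$ by Lemma~\ref{L:external product decomposition}, extend each $W_i$ to $G_i$ using $H^2(H,\overline{\QQ}_\ell^\times)=0$, and absorb a character of $H$. When the $H$-orbit of $W$ is nontrivial you say only that this ``requires care'' and invoke Lemma~\ref{L:isomorphic representations}. But that lemma asserts that a translation-equivariant continuous family of representations over $\widehat{\ZZ}^n$ is globally conjugate to a constant one; it does not furnish a tensor factorization of the induced representation $\mathrm{Ind}_{G'}^G \rho'$ into pieces defined on the individual $G_i$, and you give no indication of how to extract one. (A naive attempt---inducing each $W_i$ from the stabilizer up to $G_i$---overcounts the dimension by a factor of $[H:H']^{\,n-1}$.) There is also a mismatch in your dictionary: lisse $\overline{\QQ}_\ell$-sheaves on $X_i$ correspond to representations of $\pi_1(X_i)$, not of $\pi_1(X_i/\varphi_i)$; sorting out that discrepancy is exactly the bookkeeping that the direct-product form of Drinfeld's lemma absorbs for free. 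The remedy in every case is to recognize that direct-product structure up front, after which Lemma~\ref{L:external product decomposition} finishes the job without any Clifford-theoretic maneuver.
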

\begin{proof}
Via Theorem~\ref{T:Drinfeld}, this reduces to Lemma~\ref{L:external product decomposition}.
\end{proof}

\begin{lemma} \label{L:closed subspace Drinfeld1}
Suppose that $n=2$ and that 
$X_i$ is projective over $\FF_p$. Let $Z \to X$ be a closed immersion with $Z$ integral,
whose image is stable under the action of $\varphi_i$ for $i=1,2$
(meaning that $Z \times_{X,\varphi_i} X \to X$ factors through $Z \to X$ via an isomorphism).
Then there exist integral closed subschemes $Z_i$ of $Z$ such that $Z = Z_1 \times_{\FF_p} Z_2$.
\end{lemma}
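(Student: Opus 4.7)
The plan is to take $Z_1, Z_2$ to be the scheme-theoretic images of $Z$ under the two projections $p_i\colon X \to X_i$; since $Z$ is integral, each $Z_i$ is an integral closed subscheme of $X_i$, and there is a canonical closed immersion $Z \hookrightarrow Z_1 \times_{\FF_p} Z_2$. The substance of the lemma is to upgrade this to an equality.

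The key step is a generic-fiber analysis. Let $\eta_1$ be the generic point of $Z_1$ with residue field $K_1 := \kappa(\eta_1)$, and regard $Z_{\eta_1} := Z \times_{Z_1} \Spec K_1$ as a closed subscheme of $X_{2, K_1}$. Passing to a separable closure $\overline{K_1}$, the $\varphi_1$-stability of $Z$ yields that $Z_{\overline{K_1}} \subseteq X_{2, \overline{K_1}}$ is stable under the endomorphism $\id_{X_2} \times \Spec \Frob_{\overline{K_1}}$, which is precisely the partial Frobenius $\varphi_{\overline{K_1}}$ of Section~\ref{sec:local Drinfeld schemes}. The open complement of $Z_{\overline{K_1}}$ in $X_{2, \overline{K_1}}$ is quasicompact since $X_2$ is projective hence noetherian, so Lemma~\ref{L:open immersion Drinfeld1} descends it to an open $V \subseteq X_2$. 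Setting $W := (X_2 \setminus V)_{\mathrm{red}}$ and using the dominance of $Z \to Z_2$ (together with $\varphi_2$-stability to track the scheme structure on the fiber), one identifies $W$ with $Z_2$ and, via faithfully flat descent, promotes this to the scheme-theoretic equality $Z_{\eta_1} = Z_2 \times_{\FF_p} K_1$.

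Having matched the generic fibers of $Z \to Z_1$ and $Z_1 \times_{\FF_p} Z_2 \to Z_1$, both $Z$ and $Z_1 \times_{\FF_p} Z_2$ are realized as the schematic closures of their (matching) generic fibers inside $Z_1 \times_{\FF_p} X_2$. It remains to check that $Z_1 \times_{\FF_p} Z_2$ is integral; the joint $\varphi_1, \varphi_2$-stability forces the constant fields $F_i := \kappa(\eta_i) \cap \overline{\FF}_p$ to have coprime degrees over $\FF_p$ (by a Galois-theoretic calculation using that $\varphi_i$ acts as Frobenius on $F_i$ and as the identity on $F_{3-i}$), which together with the regularity of the extensions $\kappa(\eta_i)/F_i$ implies that $\kappa(\eta_1) \otimes_{\FF_p} \kappa(\eta_2)$ is a domain. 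The equality $Z = Z_1 \times_{\FF_p} Z_2$ then follows.

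The principal obstacle is the joint handling of the two partial Frobenii under base change to the non-perfect field $K_1$: one must deploy $\varphi_1$-stability to apply Lemma~\ref{L:open immersion Drinfeld1} and $\varphi_2$-stability to control both the scheme structure of the generic fiber and the constant-field compatibility, while managing inseparability issues via a careful sequence of reductions and faithfully flat descent.
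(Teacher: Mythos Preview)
Your approach and the paper's share the same opening move: take $Z_i$ to be the image of $Z$ in $X_i$ and apply Lemma~\ref{L:open immersion Drinfeld1} to the complement of the fibre of $Z$ over a geometric generic point $\overline{\eta}_1$ of $Z_1$. They diverge sharply thereafter. Having obtained a reduced closed $Z_2' \subseteq X_2$ with $(Z_{\overline{\eta}_1})_{\mathrm{red}} = \overline{\eta}_1 \times_{\FF_p} Z_2'$, the paper (after replacing each $X_i$ by $Z_i$) simply observes that $p_2(Z) \subseteq Z_2'$, whence $Z_2' = X_2$; thus $Z$ contains the dense subset $\eta_1 \times_{\FF_p} X_2$, and since $Z$ is closed and reduced and $X$ is reduced (product of reduced schemes over the perfect field $\FF_p$), this forces $Z = X$. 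No schematic-closure bookkeeping, no descent, and no separate integrality analysis of $Z_1 \times_{\FF_p} Z_2$ are needed.

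Your route is correct in outline but carries avoidable weight, and has one slip: Lemma~\ref{L:open immersion Drinfeld1} is stated for $k$ algebraically closed, while $K_1 = \kappa(\eta_1)$ is typically imperfect, so ``separable closure'' should be ``algebraic closure''. The descent back to $K_1$ then reduces to the fact that two reduced closed subschemes of $(X_2)_{K_1}$ with the same support coincide; your appeal to $\varphi_2$-stability and faithfully flat descent at this point is unclear and, as far as I can see, unnecessary. Your constant-field argument is correct and pleasant---the induced automorphism $\varphi_1^*$ of the compositum $F_1 F_2 \subseteq \kappa(Z)$, being Frobenius on $F_1$ and identity on $F_2$, does force $\gcd([F_1{:}\FF_p],[F_2{:}\FF_p]) = 1$---but the paper's one-line density argument makes it superfluous.
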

\begin{proof}
The projection $X \to X_i$ is a closed map, so the image of $Z$ in $X_i$ is closed; by replacing the $X_i$ with suitable closed subschemes, we may assume that $Z$ surjects onto $X_1$ and $X_2$. We may also assume that $X_1$ and $X_2$ are themselves integral; under these conditions, we show that $Z = X$.

Let $\overline{\eta}_1$ be a geometric point of $X_1$ lying over the generic point $\eta_1$.
By Lemma~\ref{L:open immersion Drinfeld1}, $Z \times_X (\overline{\eta}_1 \times_{\FF_p} X_2)$ has the form
$\overline{\eta}_1 \times_{\FF_p} Z_2$ for some reduced closed subscheme $Z_2$ of $X_2$. Since the image of $Z$
in $X_2$ is contained in $Z_2$, in order for $Z \to X_2$ to be surjective
we must have $Z_2 = X_2$; that is, $Z$ contains
$\eta_1 \times_{\FF_p} X_2$. Since the latter is dense in $X$, we deduce that $Z = X$ as desired.
\end{proof}

\begin{theorem} \label{T:stable open}
Let $U \to X$ be a quasicompact open immersion whose image is stable under the action of $\varphi_i$ for $i=1,\dots,n$ (meaning that $U \times_{X,\varphi_i} X \to X$ factors through $U \to X$ via an isomorphism).
Then $U$ is covered by subschemes of the form $U_1 \times_{\FF_p} \cdots \times_{\FF_p} U_n$
where $U_i$ is an open affine subscheme of $X_i$.
\end{theorem}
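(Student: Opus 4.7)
Fix $u \in U$ with images $x_i \in X_i$ under the projections $\pi_i\colon X \to X_i$; the plan is to construct an open affine box $u \in U_1 \times_{\FF_p} \cdots \times_{\FF_p} U_n \subset U$. Since $\varphi_i$ is the base change of the absolute Frobenius on $X_i$ (which is the identity on underlying topological spaces), one checks that $\varphi_i^{-1}(W_1 \times_{\FF_p} \cdots \times_{\FF_p} W_n) = W_1 \times_{\FF_p} \cdots \times_{\FF_p} W_n$ as subschemes, for any opens $W_i \subset X_i$. Replacing each $X_i$ by an affine open neighborhood of $x_i$, I may therefore assume each $X_i = \Spec R_i$ is affine. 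Then, as in the proof of Lemma~\ref{L:open immersion Drinfeld1}, the finitely generated ideal cutting out $X \setminus U$ together with the relations witnessing $\varphi_i$-stability involve only finitely many elements of each $R_i$; descending, I may further assume each $X_i$ is affine of finite type over $\FF_p$, fix projective compactifications $X_i \hookrightarrow Y_i$, and set $Y := Y_1 \times_{\FF_p} \cdots \times_{\FF_p} Y_n$, a projective (hence Noetherian) scheme carrying extensions of each $\varphi_i$.

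The complement $Z^* := Y \setminus U$ is closed and satisfies $\varphi_i^{-1}(Z^*) = Z^*$ in $Y$. Since $Y$ is Noetherian, $Z^*$ has finitely many irreducible components $\{Z^*_\alpha\}$, and the group $\Phi = \langle \varphi_1, \ldots, \varphi_n \rangle \cong \ZZ^n$ permutes them; hence some $N \ge 1$ has the property that every $Z^*_\alpha$ is stable under each $\varphi_i^N$ simultaneously. The hard part is an iterated, $n$-factor generalization of Lemma~\ref{L:closed subspace Drinfeld1}: each such $Z^*_\alpha$ is topologically of \emph{box form}, meaning $|Z^*_\alpha| = \bigcap_{i=1}^n \pi_i^{-1}(|Z_{i,\alpha}|)$ where $Z_{i,\alpha} := \pi_i(Z^*_\alpha) \subset Y_i$ is the integral closed image (closed by properness of $\pi_i$). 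The proof parallels that of Lemma~\ref{L:closed subspace Drinfeld1}: after reducing to the case where $Z^*_\alpha$ surjects onto each $Y_i$ (otherwise the box conclusion for that $\alpha$ is trivial), slice at a geometric generic point of $Y_1$, apply a variant of Lemma~\ref{L:open immersion Drinfeld1} in which $\varphi_k^N$-stability forces the slice to descend to $X_2 \times_{\FF_p} \FF_{p^N}$ (rather than to $X_2$ itself), and conclude via a dimension-and-surjectivity argument together with induction on $n$ that $Z^*_\alpha = Y$.

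Granted the box form, the remainder is combinatorial. For each $\alpha$, since $u \notin Z^*_\alpha$, choose $i_\alpha \in \{1, \ldots, n\}$ with $x_{i_\alpha} \notin Z_{i_\alpha, \alpha}$; for each $i$, set $V_i := \bigcap_{\alpha : i_\alpha = i}(Y_i \setminus Z_{i, \alpha})$, a finite intersection of opens in $Y_i$ containing $x_i$. Then $V_1 \times_{\FF_p} \cdots \times_{\FF_p} V_n$ is disjoint from every $Z^*_\alpha$ and hence from $Z^*$, so it is contained in $U$. Shrink each $V_i \cap X_i$ to an open affine neighborhood $U_i$ of $x_i$ in $X_i$; then $U_1 \times_{\FF_p} \cdots \times_{\FF_p} U_n$ is the sought box containing $u$, and pulling back through the finite-type and affine reductions yields a corresponding box neighborhood in the original $X$. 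Since $u$ was arbitrary, $U$ is covered by such boxes, completing the proof.
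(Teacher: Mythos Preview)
Your approach is essentially the paper's: reduce to affine finite type, projectively compactify, show each irreducible component of the complement is a box via Lemma~\ref{L:closed subspace Drinfeld1}, and read off a box cover of $U$. Two organizational differences are worth noting. First, you work with all $n$ factors at once and invoke an $n$-factor version of Lemma~\ref{L:closed subspace Drinfeld1}, whereas the paper reduces to $n=2$ at the outset. Second, you are more explicit than the paper about the fact that the $\varphi_i$ may genuinely permute the irreducible components of $Z^*$ (the partial Frobenius $\varphi_i$ is a universal homeomorphism on $|Y|$ but \emph{not} the identity there), forcing you to pass to $\varphi_i^N$; the paper's one-line application of Lemma~\ref{L:closed subspace Drinfeld1} glosses over this, so your care here is a genuine improvement.

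That said, your sketch of the $n$-factor lemma has a soft spot. After slicing at $\overline{\eta}_1$ and descending, the resulting closed set $Z' \subset (Y_2 \times_{\FF_p} \cdots \times_{\FF_p} Y_n) \times_{\FF_p} \FF_{p^N}$ need not be integral, so your inductive hypothesis does not apply to it directly; the phrase ``dimension-and-surjectivity argument together with induction on $n$'' is doing unexplained work, and ``otherwise the box conclusion is trivial'' is not accurate (it is a reduction, not a triviality). The clean way to organize the induction---implicit in the paper's ``by induction reduce to $n=2$''---is at the level of the lemma itself: view $Y = Y' \times_{\FF_p} Y_n$ with $Y' = Y_1 \times_{\FF_p} \cdots \times_{\FF_p} Y_{n-1}$, apply the $n=2$ case (with $\varphi_{Y'}^N = (\varphi_1 \cdots \varphi_{n-1})^N$ and $\varphi_n^N$) to write $Z^*_\alpha = W' \times_{\FF_p} W_n$, check that $W'$ is integral and $\varphi_i^N$-stable for $i<n$, and then apply the $(n-1)$-factor case to $W'$. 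This avoids the non-integrality of slices and the $\FF_{p^N}$-bookkeeping entirely. With that fix, your argument is complete and matches the paper's.
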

\begin{proof}
By induction, we may reduce to the case $n=2$. We may also assume that $X_1, X_2$ are affine.
As in the proof of Lemma~\ref{L:open immersion Drinfeld1}, we may further assume that 
$X_1, X_2$ are of finite type over $\FF_p$.
We may now choose projective compactifications $Y_1, Y_2$ of $X_1, X_2$ and view $U$ as an open subscheme
of $Y := Y_1 \times_{\FF_p} Y_2$. By Lemma~\ref{L:closed subspace Drinfeld1}, each irreducible component of
$Y \setminus U$ is of the form $Z_1 \times_{\FF_p} Z_2$ where $Z_i$ is a closed subset of $X_i$; this proves the claim.
\end{proof}

\section{Convergent $\Phi$-isocrystals}

We next introduce the category of convergent $\Phi$-isocrystals on a product of $\FF_p$-schemes, each smooth over some perfect field. The reader is invited to keep in mind the case where all of these base fields are finite (or even $\FF_p$), as this is the most crucial case for applications (see Remark~\ref{R:finite base field});
however, for certain arguments it will be convenient to reduce to the case where these fields are all algebraically closed.

\begin{defn} \label{D:product notation}
Hereafter, for $i=1,\dots,n$, let $k_i$ be a perfect field of characteristic $p$,
let $K_i$ be the fraction field of the Witt ring over $k_i$,
and put $K := K_1 \widehat{\otimes}_{\QQ_p} \cdots \widehat{\otimes}_{\QQ_p} K_n$.
(That is, take the $p$-adic completion of $W(k_1) \otimes_{\ZZ_p} \cdots \otimes_{\ZZ_p} W(k_n)$, then invert $p$ to obtain $K$. Note that $K$ is not a field if $i > 1$.)

Let $X_i$ be a smooth scheme over $k_i$ and put
\[
X := X_1 \times_{\FF_p} \cdots \times_{\FF_p} X_n.
\]
Let $\varphi_i\colon X \to X$ be the map that acts by Frobenius on the $i$-th factor and the identity on the other factors; note that these maps commute pairwise.
\end{defn}

\begin{remark} \label{R:finite base field}
Beware that in general $X$ is not noetherian, which creates some complications in what follows. However, in the crucial use case of the Langlands correspondence, the base fields $k_i$ are finite,  in which case this and many other technical difficulties evaporate. See \cite{kedlaya-xu} for a more detailed treatment of this case.
\end{remark}

\begin{defn} \label{D:phi-isocrystal}
For $a$ a positive integer,
we define the category $\PhiaIsoc(X)$ of \emph{convergent $\Phi^a$-isocrystals} on $X$
using local liftings as in \cite[\S 2]{kedlaya-isocrystals}; 
it is a stack for the Zariski topology and the \'etale topology on each $X_i$.

To make this explicit, assume that each $X_i$ is affine; we can then choose a smooth affine formal scheme $P_i$ over $W(k_i)$
with special fiber $X_i$, together with a Frobenius lift $\sigma_i$ on $P_i$. Let $P$ be the product
$P_1 \times_{\ZZ_p} \cdots \times_{\ZZ_p} P_n$.
We define the module of continuous differentials $\Omega^1_P$
as the external product $\Omega_{P_1/W(k_1)} \boxtimes \cdots \boxtimes \Omega_{P_n/W(k_n)}$; in particular, it is a finite projective $\Gamma(P, \calO)$-module.

An object of $\PhiaIsoc(X)$ may then be interpreted as a finite projective module $\calE$ over $\Gamma(P, \calO)[p^{-1}]$
equipped with an integrable $K$-linear connection
$\nabla\colon \calE \to \calE \otimes_{\Gamma(P,\calO)} \Omega^1_P$
and horizontal isomorphisms $F_i\colon (\sigma_i^a)^* \calE  \cong \calE$ that commute pairwise in the sense that 
for $1 \leq i,j \leq n$, $((\sigma_j^a)^* F_i) \circ F_j$ and $((\sigma_i^a)^* F_j) \circ F_i$ coincide as morphisms from
$(\sigma_i^a)^* (\sigma_j^a)^* \calE \cong (\sigma_j^a)^* (\sigma_i^a)^* \calE$ to $\calE$.

Using similar arguments to the classical case, one can show that the definition of $\PhiaIsoc(X)$ is independent of the choice of $X$; more precisely, one defines a site of formal lifts and shows that the pullback functor from crystals on this site to $\PhiaIsoc(X)$ as we have defined it is an equivalence.

The category $\PhiaIsoc(X)$ is a $\QQ_{p^a}$-linear tensor category, where $\QQ_{p^a}$ denotes the unramified
extension of $\QQ_p$ with residue field of degree $a$ over $\FF_p$.
\end{defn}

\begin{remark} \label{R:reduce to first power}
For $b$ divisible by $a$, the restriction functor from
$\PhiaIsoc(X)$ to $\PhibIsoc(X)$ admits a left and right adjoint given by
\[
\calE \mapsto \bigoplus_{i_1,\dots,i_n=0}^{b/a-1} (\sigma_1^{ai_1})^* \cdots (\sigma_n^{ai_n})^* \calE.
\]
We will use these functors at various times to make reductions in both directions, i.e., to force $a=1$ or to force $a$ to be conveniently large.
\end{remark}

\begin{defn} \label{D:absolutely irreducible}
In connection with Remark~\ref{R:reduce to first power}, we say that an object $\calE \in \PhiaIsoc(X)$ is \emph{absolutely irreducible}
if its restriction to $\PhibIsoc(X)$ remains irreducible for any multiple $b$ of $a$.

Note that if $\calE$ is irreducible but not absolutely irreducible, then $H^0(X, \calE^\dual \otimes \calE)$ is a division algebra over $\QQ_{p^a}$, which is split by $\QQ_{p^b}$ for some multiple $b$ of $a$ by local class field theory.
For this reason, it will be sufficient for our purposes to consider $\Phi^a$-isocrystals for varying $a$, without introducing ramified extension of $\QQ_p$ into the coefficients.
\end{defn}

\begin{lemma} \label{L:Fitting ideals}
Suppose that $X_1,\dots,X_n$ are geometrically connected and affine and let $\calO$ be the unit object of $\PhiaIsoc(X)$.
Then the only nonzero submodule of $\Gamma(P, \calO)$ stable under
the Frobenius and connection structures is $\calO$ itself.
\end{lemma}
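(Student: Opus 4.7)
The plan is to reduce the claim to a statement about $\varphi_i^a$-invariant clopen subsets of $X$, which can then be handled via the results of the preceding sections.

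First, I would consider a nonzero submodule $M \subseteq \Gamma(P, \calO)[p^{-1}]$ stable under the connection and Frobenius structures, and aim to show $M = \calO$. Since $M$ is a finite projective submodule of the rank-$1$ unit $\calO$, its rank is at most $1$, and the annihilator $I$ of the cokernel $\calO/M$ is a horizontal ideal of $\Gamma(P, \calO)[p^{-1}]$. Because $P$ is smooth over $W(k_i)$ and we have inverted $p$ (putting us in characteristic $0$), such horizontal ideals should correspond to idempotents: the quotient $\Gamma(P, \calO)[p^{-1}]/I$, carrying an induced integrable connection, will be \'etale over $\Gamma(P, \calO)[p^{-1}]$, forcing $V(I)$ to be a clopen subscheme. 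Thus $M = e \cdot \calO$ for some idempotent $e \in \Gamma(P, \calO)[p^{-1}]$.

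Second, since $e$ is power-bounded, it lies in the integral ring $\Gamma(P, \calO)$; by $p$-adic completeness and the standard lifting of idempotents, $e$ will correspond to an idempotent of $\Gamma(X, \calO_X)$, hence to a clopen subset $U \subseteq X$. The Frobenius stability of $M$ under each $F_i \colon (\sigma_i^a)^* M \cong M$ then translates to $\sigma_i^a(e) = e$, i.e., $\varphi_i^a$-invariance of $U$.

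Finally, I would invoke the structure of connected components of $X$ (cf.\ Corollary~\ref{C:geometrically integral product}) together with the geometric connectedness of each $X_i$: $\pi_0(X)$ is controlled by a Galois-theoretic quotient of $\widehat{\ZZ}^n$ on which the $\varphi_i^a$ act by translations, and the geometric connectedness hypothesis should force any $\varphi_i^a$-invariant clopen subset to be $\emptyset$ or all of $X$. Since $M \neq 0$, we obtain $U = X$ and hence $M = \calO$.

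The main obstacle will be the first step: establishing that connection-stability forces $M$ to be idempotent-generated. This relies on the characteristic-$0$ nature of $\Gamma(P, \calO)[p^{-1}]$ (obtained by inverting $p$) and the smoothness of $P$ to identify horizontal ideals with clopen loci, an identification which can fail in positive characteristic. The final step, while straightforward for $a = 1$, may require additional care for larger $a$ to ensure the appropriate orbit structure on $\pi_0(X)$.
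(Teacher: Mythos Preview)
Your first step has a real gap. The integrable connection on $\calO$ is $K$-linear, where $K = K_1 \widehat{\otimes}_{\QQ_p} \cdots \widehat{\otimes}_{\QQ_p} K_n$; when the $k_i$ are not all finite, $K$ is far from being a field, and the principle ``a finite module with integrable connection is locally free'' (which underlies your claim that horizontal ideals are idempotent-generated) breaks down over such a base. Horizontality only controls variation in the directions of $\Omega^1_P = \boxtimes_i \Omega^1_{P_i/W(k_i)}$, so a horizontal ideal of $\Gamma(P,\calO)[p^{-1}]$ is at best the extension of an ideal of $K$, and ideals of $K$ are not in general generated by idempotents. The extreme case makes this vivid: if every $X_i = \Spec k_i$, the connection is zero and your Step~1 asserts that \emph{every} ideal of $K \cong W(k_1 \otimes_{\FF_p} \cdots \otimes_{\FF_p} k_n)[p^{-1}]$ is idempotent-generated, which fails as soon as two of the $k_i$ have positive transcendence degree over $\overline{\FF}_p$ (since $k_1 \otimes_{\FF_p} k_2$ then has positive Krull dimension). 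You also assume without justification that $M$ is finite projective; the lemma concerns arbitrary submodules.

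The paper circumvents exactly this difficulty by slicing. For each index $i$ it chooses a field $k \supseteq k_i$ admitting a $k$-point of the product of the remaining factors, and pulls $M$ back along the resulting map $X_i \times_{k_i} k \to X$. This lands in a noetherian rigid-analytic situation over the \emph{field} $\Frac W(k)$, where the image of $M$ is automatically coherent and the standard ``coherent sheaf with integrable connection is locally free'' result, combined with the geometric connectedness of $X_i$, forces that image to be $0$ or the whole unit object. Running over all $i$ and all such points shows that a nonzero $M$ is supported at every point of $\Spec \Gamma(P,\calO)[p^{-1}]$, whence $M = \calO$. Your idempotent strategy would go through when every $k_i$ is finite (so that $K$ is a finite \'etale $\QQ_p$-algebra and hence a finite product of fields), but not in the stated generality.
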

\begin{proof}
Using Remark~\ref{R:reduce to first power}, we reduce to the case $a=1$.
Let $M$ be a submodule of $\Gamma(P, \calO)$ stable under
the Frobenius and connection structures of $\calO$.
Choose a field extension $k$ of $k_1$ for which there exists a $k$-valued point
$x \to X_2 \times_{\FF_p} \cdots \times_{\FF_p} X_n$. We can then
pull back $\calE$ along the map $X_1 \times_{k_1} k \to X_1 \times_{\FF_p} \cdots \times_{\FF_p} X_n$ induced by
the projection $X_1 \times_{k_1} k \to X_1$ and the map $X_1 \times_{k_1} k \to \Spec k \cong x \to X_2 \times_{\FF_p} \cdots \times_{\FF_p} X_n$; using the Frobenius action coming from the composition of the actions of the $\varphi_i$,
we obtain a submodule of the unit object in $\FIsoc(X_1 \times_{k_1} k)$.
Since $X_1$ is geometrically connected, so is $X_1 \times_{k_1} k$, so the resulting object is either zero or the unit object (because a coherent sheaf on a rigid analytic space admitting an integrable connection is locally free; see for example \cite[Lemma~3.3.3]{kedlaya-goodformal2}). 

A similar argument applies with $X_1$ replaced by $X_i$ for $i=2,\dots,n$. Using these facts, we see that if $M$ is nonzero, then it is supported at every point of $\Spec \Gamma(P, \calO)[p^{-1}]$, and thus must be equal to $\calO$.
\end{proof}

\begin{cor} 
The category $\PhiaIsoc(X)$ is abelian.
\end{cor}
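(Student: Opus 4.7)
My plan is to reduce the assertion to showing that for any morphism $f\colon \calE \to \calF$ in $\PhiaIsoc(X)$, the kernel and cokernel formed in the underlying category of modules over $R := \Gamma(P, \calO)[p^{-1}]$ are again finite projective. Once this is known, they inherit integrable connections and commuting Frobenius structures from $\calE$ and $\calF$, giving objects of $\PhiaIsoc(X)$, and the remaining abelian-category axioms (e.g.\ that every monomorphism is the kernel of its cokernel) transfer from the ambient category of finite projective $R$-modules. Since being abelian is local and $\PhiaIsoc(X)$ is a Zariski stack on each $X_i$, I will first shrink each $X_i$ to an affine open and then arrange it to be geometrically connected (after enlarging $k_i$ to the algebraic closure of $k_i$ in $\Gamma(X_i, \calO_{X_i})$ and shrinking if necessary). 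This places us in the setting of Lemma~\ref{L:Fitting ideals}.

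Given such an $f$, I will set $C := \coker(f)$ as an $R$-module; it is finitely presented and carries the induced connection and commuting Frobenius structures from $\calF$. The key observation is that each Fitting ideal $\mathrm{Fitt}_j(C) \subseteq R$ is stable under every Frobenius lift $\sigma_i^a$ (because $f$ is Frobenius-equivariant and Fitting ideals commute with ring base change) and under every derivation arising from $\Omega^1_P$ (a standard consequence of the Leibniz rule applied to the minors of a local presentation matrix for $C$). Lemma~\ref{L:Fitting ideals} then forces each $\mathrm{Fitt}_j(C)$ to equal either $0$ or $R$, so $C$ has locally constant rank, which combined with finite presentation makes $C$ a finite projective $R$-module.

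Once $C$ is finite projective, $\mathrm{im}(f) = \ker(\calF \twoheadrightarrow C)$ is the kernel of a surjection between finite projectives, hence a direct summand of $\calF$; the same reasoning applied to $\calE \twoheadrightarrow \mathrm{im}(f)$ shows that $\ker(f)$ is a direct summand of $\calE$. All three objects then inherit natural $\Phi^a$-isocrystal structures and satisfy the required universal properties. The main obstacle will be justifying the differential stability of the Fitting ideals: while this is standard over noetherian bases, some care is warranted because $P$ need not be noetherian in general. However, since Fitting ideals and differentiation are both local operations depending only on a chosen presentation, the classical Leibniz-rule computation should carry over without essential change.
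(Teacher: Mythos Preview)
Your proposal is correct and follows essentially the same approach as the paper: reduce to the affine, geometrically connected setting, observe that the Fitting ideals of the cokernel are stable under the Frobenius and connection structures, invoke Lemma~\ref{L:Fitting ideals} to force each Fitting ideal to be $0$ or the unit ideal, and conclude that the cokernel (hence image and kernel) is finite projective. The paper's proof is a two-sentence sketch of exactly this; you have simply filled in the details (the reduction to geometrically connected $X_i$, the base-change and Leibniz-rule justifications for stability of Fitting ideals) that the paper leaves implicit.
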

\begin{proof}
Given a morphism $f\colon \calE_1 \to \calE_2$ in $\PhiaIsoc(X)$, by applying Lemma~\ref{L:Fitting ideals} to the Fitting ideals of the cokernel of $f$, we see that this cokernel is a projective module over $\Gamma(P, \calO)[p^{-1}]$.
This implies in turn that the image of $f$ is projective, as then is the kernel.
\end{proof}

\begin{lemma} \label{L:mod p restrict to point}
Let $R$ and $S$ be reduced $\FF_p$-algebras and choose $g \in S$ not a zero divisor. Then
an element $f \in R \otimes_{\FF_p} S[g^{-1}]$ belongs to $R \otimes_{\FF_p} S$ if and only if
for every morphism $R \to k$ with $k$ an algebraically closed field, the image of $f$ in
$k \otimes_{\FF_p} S[g^{-1}]$ belongs to $k \otimes_{\FF_p} S$.
\end{lemma}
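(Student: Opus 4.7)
The forward direction is immediate, so I focus on the converse. My plan is to reduce the problem to the following elementary fact: if $R$ is a reduced $\FF_p$-algebra and $N$ is any $\FF_p$-vector space, then an element $x \in R \otimes_{\FF_p} N$ vanishes if and only if its image in $k \otimes_{\FF_p} N$ vanishes for every homomorphism $\rho\colon R \to k$ with $k$ algebraically closed.

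To carry out the reduction, I would write $f = a/g^m$ with $a \in R \otimes_{\FF_p} S$ and $m \geq 0$. Because $g$ is a non-zero-divisor in $S$ and $R$ is flat over the field $\FF_p$, multiplication by $1 \otimes g$ is injective on $R \otimes_{\FF_p} S$, so the canonical map $R \otimes_{\FF_p} S \to R \otimes_{\FF_p} S[g^{-1}]$ is an injection. The condition $f \in R \otimes_{\FF_p} S$ is therefore equivalent to $a \in (1 \otimes g^m)(R \otimes_{\FF_p} S)$, which is to say that the image $\bar a$ of $a$ in $R \otimes_{\FF_p} (S/g^m S)$ vanishes. The same argument with $R$ replaced by $k$ shows that the hypothesis at $\rho\colon R \to k$ translates into the vanishing of the image of $\bar a$ in $k \otimes_{\FF_p} (S/g^m S)$. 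Thus the claim reduces, taking $N := S/g^m S$, to the elementary fact above.

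To prove that fact, I would pick an $\FF_p$-basis $\{n_\alpha\}$ of $N$ (available since every module over the field $\FF_p$ is free), so that $R \otimes_{\FF_p} N \cong \bigoplus_\alpha R$ and $k \otimes_{\FF_p} N \cong \bigoplus_\alpha k$, with the map induced by $\rho$ applied componentwise. Writing $x = \sum_\alpha r_\alpha \otimes n_\alpha$, the vanishing of all images amounts to $\rho(r_\alpha) = 0$ for every $\rho$ and every $\alpha$. Since $R$ is reduced, $\bigcap_{\mathfrak{p} \in \Spec R} \mathfrak{p} = 0$; and each prime $\mathfrak{p}$ is realized as $\ker \rho$ for some $\rho$ by taking $k$ to be an algebraic closure of $\Frac(R/\mathfrak{p})$. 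Hence $r_\alpha = 0$ for every $\alpha$, and $x = 0$.

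I do not anticipate a serious obstacle. The only potential stumbling block is that $S/g^m S$ is typically not reduced, but this is irrelevant because the argument only uses its underlying $\FF_p$-vector space structure; the reducedness hypothesis is consumed entirely on the $R$ side to pass from $\bigcap_\mathfrak{p} \mathfrak{p}$ to $0$.
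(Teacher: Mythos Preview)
Your proof is correct and follows essentially the same approach as the paper: both reduce to the vanishing of an element in $R \otimes_{\FF_p} N$ for a suitable $\FF_p$-vector space $N$, then pick a basis and invoke that a reduced ring injects into a product of (algebraically closed) fields. The only cosmetic difference is that the paper takes $N = S[g^{-1}]/S$ directly, whereas you first clear denominators to work with $N = S/g^m S$; the underlying mechanism is identical.
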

\begin{proof}
Choose a basis for the $\FF_p$-vector space $S[g^{-1}]/S$, then write the image of $f$ in $R \otimes_{\FF_p} (S[g^{-1}]/S)$
in terms of this basis. By considering individual basis vectors separately, we reduce to the observation that $R$ injects into a product of fields.
\end{proof}

\begin{lemma} \label{L:restrict to point}
Suppose that $X_1,\dots,X_n$ are affine and 
put $X' := X_1 \times_{\FF_p} \cdots \times_{\FF_p} X_{n-1}$.
Let $U$ be an open dense subscheme of $X_n$, let $Q_n$ be the open formal subscheme of $P_n$ supported on $U_n$,
and let $Q$ be the open formal subscheme of $P$ supported on
$X' \times_{\FF_p} U_n$.
Let $M$ be a finite projective $\Gamma(P,\calO)[p^{-1}]$-module.
Then an element $\bv$ of $M \otimes_{\Gamma(P,\calO)} \Gamma(Q, \calO)$ belongs to $M$ if and only if
for every geometric point $\overline{x} \to X'$, the image of $\bv$ in
$M \otimes_{\Gamma(P, \calO)} \Gamma(W(\overline{x}) \times_{\ZZ_p} Q_n, \calO)$ belongs to
$M \otimes_{\Gamma(P, \calO)} \Gamma(W(\overline{x}) \times_{\ZZ_p} P_n, \calO)$.
\end{lemma}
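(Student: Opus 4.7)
Set $A := \Gamma(P, \calO)$, $B := \Gamma(Q, \calO)$, and, for each geometric point $\overline{x} \to X'$, $A^{\overline{x}} := \Gamma(W(\overline{x}) \times_{\ZZ_p} P_n, \calO)$ and $B^{\overline{x}} := \Gamma(W(\overline{x}) \times_{\ZZ_p} Q_n, \calO)$; only the ``if'' direction requires proof. Since $M$ is a direct summand of a free $A[p^{-1}]$-module, checking membership coordinatewise reduces to the case $M = A[p^{-1}]$. Covering $U$ by principal open subschemes $D(g) \subset X_n$ and using that $\Gamma(Q,\calO)$ is the equalizer of its restrictions along such a cover, combined with the fact that the finite projective $M$ preserves this equalizer, reduces further to the case $U = D(g)$, in which $B$ is the $p$-adic completion of $A[g^{-1}]$. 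After multiplying $\bv$ by a sufficiently high power of $p$ I may assume $\bv \in B$; the fiberwise hypothesis is unchanged.

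The first key step is an ``integral improvement'' of the fiberwise hypothesis: $g$ is a nonzerodivisor in $\Gamma(P_n,\calO)/p^k$ for every $k \geq 1$ (modulo $p$ because $\Gamma(X_n,\calO_{X_n})$ is reduced with $D(g)$ dense, and to higher $k$ by induction using $p$-torsion-freeness of $\Gamma(P_n,\calO)$). Tensoring with the flat $\ZZ_p$-module $W(\overline{x})$ yields that the cokernel of $A^{\overline{x}} \to B^{\overline{x}}$ is $p$-torsion-free, so
\[
A^{\overline{x}}[p^{-1}] \cap B^{\overline{x}} = A^{\overline{x}}.
\]
The hypothesis $\bv|_{\overline{x}} \in A^{\overline{x}}[p^{-1}]$, together with $\bv|_{\overline{x}} \in B^{\overline{x}}$, therefore gives the integral statement $\bv|_{\overline{x}} \in A^{\overline{x}}$ for every $\overline{x}$.

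An induction on $k$ now shows that $\bv$ admits an $A$-approximant modulo $p^k$. The identifications
\[
A/p = \Gamma(X',\calO) \otimes_{\FF_p} \Gamma(X_n,\calO), \qquad B/p = \Gamma(X',\calO) \otimes_{\FF_p} \Gamma(U,\calO),
\]
in which both $\FF_p$-algebra factors are reduced (smoothness of each $X_i$ over the perfect $k_i$, together with perfectness of $\FF_p$ and iterated preservation of reducedness under $\otimes_{\FF_p}$ of geometrically reduced algebras), place us in the setting of Lemma~\ref{L:mod p restrict to point} with $R = \Gamma(X',\calO)$ and $S = \Gamma(X_n,\calO)$. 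Its hypothesis is precisely the mod-$p$ shadow of the integral fiberwise statement just obtained, so $\bv \equiv a_0 \pmod{pB}$ for some $a_0 \in A$. Writing $\bv - a_0 = p\bv_1$ with $\bv_1 \in B$, one checks that $\bv_1$ inherits the same integral fiberwise condition (since both $\bv|_{\overline{x}}$ and $a_0|_{\overline{x}}$ lie in $A^{\overline{x}}$, and the $p$-torsion-freeness argument above applies again); iterating produces $a_0, a_1, \dots \in A$ with $\bv \equiv \sum_{j=0}^{k-1} p^j a_j \pmod{p^k B}$ for every $k$. The partial sums converge in the $p$-adically complete $A$ to an element that agrees with $\bv$ modulo every power of $p$, hence equals $\bv$ by $p$-adic separation of $B$. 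Undoing the initial scaling yields $\bv \in A[p^{-1}]$.

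The main technical obstacle is the integral improvement step: promoting the fiberwise $p$-inverted hypothesis to an integral one is what allows Lemma~\ref{L:mod p restrict to point} to be applied at the mod-$p$ level and, crucially, to be applied again to the successive remainders $\bv_k$; everything else is bookkeeping with $p$-adic completions.
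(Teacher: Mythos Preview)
Your proof is correct and follows essentially the same route as the paper: reduce to $M = A[p^{-1}]$ via the direct-summand trick, then check membership modulo successive powers of $p$ by invoking Lemma~\ref{L:mod p restrict to point} at each stage. The paper compresses all of this into two sentences; you have unpacked the two steps it elides, namely the integral improvement $A^{\overline{x}}[p^{-1}] \cap B^{\overline{x}} = A^{\overline{x}}$ (needed so that the fiberwise hypothesis actually descends to a mod-$p$ statement) and the explicit successive-approximation argument producing the $a_j$. Your reduction to $U = D(g)$ is not strictly necessary---Lemma~\ref{L:mod p restrict to point} works just as well with $S[g^{-1}]$ replaced by $\Gamma(U,\calO_{X_n})$ for any dense open $U$, since the proof only uses a basis of the cokernel of $S \hookrightarrow \Gamma(U,\calO)$---but it does no harm.
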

\begin{proof}
By writing $M$ as a direct summand of a finite free module, we reduce to the case $M = \Gamma(P, \calO)[p^{-1}]$.
We may then check the claim modulo successive powers of $p$, which reduces to the corresponding mod-$p$ statement:
an element $f$ of $\Gamma(X' \times_{\FF_p} U_n, \calO) = \Gamma(X', \calO) \otimes_{\FF_p} \Gamma(U_n, \calO)$ belongs to $\Gamma(X, \calO)
= \Gamma(X', \calO) \otimes_{\FF_p} \Gamma(X_n, \calO)$ if and only if 
for every geometric point $\overline{x} \to X'$, the image of $f$ in $\kappa(\overline{x}) \otimes_{\FF_p} \Gamma(U, \calO)$
belongs to $\kappa(\overline{x}) \otimes_{\FF_p} \Gamma(X, \calO)$.
This is an instance of Lemma~\ref{L:mod p restrict to point}.
\end{proof}

The following is analogous to \cite[Theorem~5.3]{kedlaya-isocrystals}.
\begin{lemma} \label{L:fully faithful}
For $i=1,\dots,n$, let $U_i \subseteq X_i$ be an open immersion with dense image and put
$U := U_1 \times_{\FF_p} \cdots \times_{\FF_p} U_n$. Then
the restriction functor $\PhiaIsoc(X) \to \PhiaIsoc(U)$ is fully faithful.
\end{lemma}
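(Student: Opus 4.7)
My plan is to use the internal-Hom formalism to reduce the statement to a claim about global sections, and then to reduce this via Lemma~\ref{L:restrict to point} to the classical single-factor fully faithfulness theorem \cite[Theorem~5.3]{kedlaya-isocrystals} by fibering at geometric points of the complementary factors. Since $\PhiaIsoc(X)$ is a rigid tensor category,
\[
\Hom_{\PhiaIsoc(X)}(\calE_1, \calE_2) = H^0(X, \calE_1^\dual \otimes \calE_2),
\]
where $H^0$ denotes horizontal and $(F_1,\dots,F_n)$-invariant global sections, and the same formula holds over $U$. It thus suffices to prove that for every $\calE \in \PhiaIsoc(X)$, the restriction $H^0(X, \calE) \to H^0(U, \calE|_U)$ is bijective. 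Injectivity is routine ($U$ is dense in the reduced scheme $X$ and $\calE$ is projective). By the stack property we may assume each $X_i$ is affine.

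For surjectivity, factor $U \hookrightarrow X$ as a composition of $n$ one-factor restrictions (replacing $X_n$ by $U_n$, then $X_{n-1}$ by $U_{n-1}$, and so on). Since a composition of fully faithful functors is fully faithful, it suffices to treat the case $U_i = X_i$ for $i < n$ with $U_n \subsetneq X_n$ a dense open. Set $X' := X_1 \times_{\FF_p} \cdots \times_{\FF_p} X_{n-1}$, and let $Q_n \subseteq P_n$ and $Q \subseteq P$ be the open formal subschemes supported on $U_n$ and $X' \times_{\FF_p} U_n$, respectively. Given $\bv \in \calE \otimes_{\Gamma(P, \calO)} \Gamma(Q, \calO)$ horizontal and $(F_1,\dots,F_n)$-invariant, I want $\bv \in \calE$. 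By Lemma~\ref{L:restrict to point}, it suffices to verify, for each geometric point $\overline{x} \to X'$, that the image $\bv_{\overline{x}}$ in $\calE \otimes_{\Gamma(P, \calO)} \Gamma(W(\overline{x}) \widehat{\otimes}_{\ZZ_p} Q_n, \calO)$ extends to $\calE \otimes_{\Gamma(P, \calO)} \Gamma(W(\overline{x}) \widehat{\otimes}_{\ZZ_p} P_n, \calO)$.

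The $\overline{x}$-fiber of $\calE$, equipped with the $P_n$-component of $\nabla$ and the Frobenius structure $F_n$, is a convergent $F^a$-isocrystal on the smooth scheme $X_n \times_{k_n} \kappa(\overline{x})$ over the algebraically closed (hence perfect) field $\kappa(\overline{x})$. The specialized section $\bv_{\overline{x}}$ remains horizontal in the $P_n$-direction (since the $P_n$-component of $\nabla \bv$ vanishes) and $F_n$-invariant, so the classical single-variable theorem \cite[Theorem~5.3]{kedlaya-isocrystals} (appealing to Remark~\ref{R:reduce to first power} if one prefers to reduce from $F^a$ to $F$) provides the required extension of $\bv_{\overline{x}}$ from $U_n$ to $X_n$ over $\overline{x}$.

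The main obstacle, modest but worth explicit attention, is verifying that the fibering step genuinely produces a convergent $F^a$-isocrystal together with a horizontal Frobenius-invariant section, so that the classical result applies; this is essentially a bookkeeping exercise in Definition~\ref{D:phi-isocrystal}. Once it is in place, Lemma~\ref{L:restrict to point} and the cited theorem combine to give the conclusion.
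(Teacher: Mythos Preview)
Your proof is correct and follows essentially the same route as the paper: reduce to a single-factor restriction, pass to $H^0$ via internal Homs, assume affine, and use Lemma~\ref{L:restrict to point} to fiber at geometric points of $X'$ so that \cite[Theorem~5.3]{kedlaya-isocrystals} applies. One cosmetic slip: the fiber over $\overline{x}$ is $\kappa(\overline{x}) \times_{\FF_p} X_n$ rather than $X_n \times_{k_n} \kappa(\overline{x})$ (there is no map $k_n \to \kappa(\overline{x})$ in general), but this does not affect the argument.
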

\begin{proof}
It is sufficient to treat the case where $U_i = X_i$ for $i=1,\dots,n-1$, as the functor in question
is a composition of functors of this form.
We may assume that the $X_i$ are affine.
Using Remark~\ref{R:reduce to first power}, we reduce to the case $a=1$.
Using internal Homs, we further reduce to checking that for $\calE \in \PhiIsoc(X)$, 
the map $H^0(X, \calE) \to H^0(U, \calE)$ is surjective.
To check that a given element of $H^0(U, \calE)$ extends to $X$, 
by Lemma~\ref{L:restrict to point} it suffices to check this
after pullback from $X_1 \times_{\FF_p} \cdots \times_{\FF_p} X_{n-1}$ to an arbitrary geometric point;
we may thus reduce directly to \cite[Theorem~5.3]{kedlaya-isocrystals}.
\end{proof}

\begin{remark}
One can also give a proof of Lemma~\ref{L:fully faithful} that uses only the diagonal Frobenius, together with arc-descent as in the proof of 
Lemma~\ref{L:katz-filtration} below. We will record the details elsewhere; the proof above is needed to serve as a model for 
Lemma~\ref{L:fully faithful2} later.
\end{remark}

\section{External products}

We next introduce the fundamental construction of convergent $\Phi$-isocrystals via external products, and illustrate some examples of objects that do not arise in this fashion.

\begin{defn}
For $S = \{i_1 < \cdots < i_m\}$ a subset of $\{1,\dots,n\}$,
we may define a \emph{pullback functor} $\PhiaIsoc(X_{i_1} \times_{\FF_p} \cdots \times_{\FF_p} X_{i_m}) \to \PhiaIsoc(X)$.
To simplify notation, we only write this out in the case where $S = \{1,\dots,n-1\}$. 
Put $X' := X_1 \times_{\FF_p} \cdots \times_{\FF_p} X_{n-1}$, work locally as in Definition~\ref{D:phi-isocrystal},
and put $P' := P_1 \times_{\ZZ_p} \cdots \times_{\ZZ_p} P_{n-1}$.
Given $\calE' \in \PhiaIsoc(X')$, define its pullback to be the module $\calE' \otimes_{\Gamma(P', \calO)} \Gamma(P, \calO)$
and define the action of $\sigma_n$ and differentiation with respect to vector fields on $P_n$ so as to act trivially on $\calE'$.
\end{defn}

\begin{defn} \label{D:external product}
Let 
\[
S' = \{i_1 < \cdots < i_{m'}\} \sqcup S'' = \{j_1 < \cdots < j_{m''}\}
\]
be a partition of $\{1,\dots,n\}$ and put
\[
X' = X_{i_1} \times_{\FF_p} \cdots \times_{\FF_p} X_{i_{m'}}, \qquad
X'' = X_{j_1} \times_{\FF_p} \cdots \times_{\FF_p} X_{j_{m''}}.
\]
For $\calE' \in \PhiaIsoc(X'), \calE'' \in \PhiaIsoc(X'')$,
define the \emph{external product} (or \emph{box product}) $\calE' \boxtimes \calE'' \in \PhiaIsoc(X)$
as the tensor product $\tilde{\calE}' \otimes \tilde{\calE}''$ where
$\tilde{\calE}', \tilde{\calE}''$ are the respective pullbacks of $\calE', \calE''$.
\end{defn}

\begin{lemma} \label{L:same product}
With notation as in Definition~\ref{D:external product},
take $\calE'_1, \calE'_2 \in \PhiaIsoc(X')$ and
$\calE''_1, \calE''_2 \in \PhiaIsoc(X'')$,
all absolutely irreducible. If
$\Hom(\calE'_1 \boxtimes \calE''_1, \calE'_2 \boxtimes \calE''_2) \neq 0$, then $\calE'_1 \cong \calE'_2, \calE''_1 \cong \calE''_2$.
\end{lemma}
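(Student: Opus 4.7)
The plan is to convert the Hom-nonvanishing into the nonvanishing of a global section of an external product, restrict to a geometric fiber of $X''$, and then apply Schur's lemma for absolutely irreducible objects (together with a symmetric argument for $X'$). Using dualizability in $\PhiaIsoc$, one has
\[
\Hom(\calE'_1 \boxtimes \calE''_1, \calE'_2 \boxtimes \calE''_2) = H^0(X, \calF \boxtimes \calG),
\]
where $\calF := \calE'_1{}^\dual \otimes \calE'_2$ in $\PhiaIsoc(X')$ and $\calG := \calE''_1{}^\dual \otimes \calE''_2$ in $\PhiaIsoc(X'')$, so the hypothesis yields a nonzero section $s$.

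Next I would select a geometric point $\overline{x}'' \to X''$ at which $s|_{X' \times \overline{x}''}$ is still nonzero. This uses the mod-$p$ separation principle underlying Lemmas~\ref{L:mod p restrict to point} and~\ref{L:restrict to point}: in a local trivialization, $s$ corresponds to a tuple of elements of $\Gamma(P', \calO) \widehat{\otimes}_{\ZZ_p} \Gamma(P'', \calO)[p^{-1}]$, and after clearing a suitable power of $p$ some coordinate has a nonzero reduction in $\Gamma(X', \calO) \otimes_{\FF_p} \Gamma(X'', \calO)$, which by Lemma~\ref{L:mod p restrict to point} has nonzero image at some geometric point of $X''$.

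The restriction $s|_{X' \times \overline{x}''}$ then represents a nonzero morphism $\calE'_1 \otimes W_1 \to \calE'_2 \otimes W_2$ in $\PhiaIsoc(X' \times \overline{x}'')$, where $W_i := (\calE''_i)|_{\overline{x}''}$ is a finite-dimensional $\Phi^a$-module on the geometric point. Decomposing $W_1$ and $W_2$ by their slopes under the $X''$-Frobenii (Dieudonn\'e--Manin over an algebraically closed residue field), the morphism decomposes isotypically; on any slope where it is nonzero, trivializing the corresponding simple isoclinic factor reduces the morphism to a nonzero map between finite direct sums of copies of $\calE'_1$ and $\calE'_2$ on $X' \times \overline{x}''$. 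Schur's lemma together with stability of Hom groups of absolutely irreducible $\Phi^a$-isocrystals under the coefficient extension $K' \hookrightarrow K' \widehat{\otimes}_{\QQ_p} K''_{\overline{x}''}$ then forces $\calE'_1 \cong \calE'_2$ on $X'$. The symmetric argument applied to a geometric fiber $\overline{x}' \to X'$ yields $\calE''_1 \cong \calE''_2$.

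I expect the main obstacle to lie in this descent step: verifying that Hom groups between absolutely irreducible $\Phi^a$-isocrystals on $X'$ cannot acquire new classes when one extends the coefficient field from $K'$ to $K' \widehat{\otimes}_{\QQ_p} K''_{\overline{x}''}$. Morally this is a Schur-type statement (finite-dimensional $\QQ_{p^a}$-vector spaces stable under a Galois-like action), but it requires unwinding the interaction between the Frobenii on both sides of the completed tensor product; a clean proof likely proceeds by first extracting the slope-zero part of the coefficient extension and then applying classical Schur over $\QQ_{p^a}$.
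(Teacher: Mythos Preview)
Your approach is essentially the paper's: rewrite the Hom as $H^0(X, \calF \boxtimes \calG)$ with $\calF = \calE'^{\dual}_1 \otimes \calE'_2$ and $\calG = \calE''^{\dual}_1 \otimes \calE''_2$, then restrict to a geometric fiber $\overline{x}'' \to X''$ where the section stays nonzero. The paper's proof is only three sentences and asserts that this restriction already yields a nonzero element of $H^0(X', \calF)$ (after enlarging the $k_i$). Your version is more explicit, but you introduce an unnecessary difficulty in the final step.

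The descent obstacle you worry about is self-inflicted. When you ``trivialize the corresponding simple isoclinic factor'' of $W_1, W_2$, you discard the $\varphi_{\overline{x}''}^a$-equivariance of the restricted section, and are then left trying to compare $\Hom$ over $X'$ with $\Hom$ over $X' \times_{\FF_p} \overline{x}''$ without the extra Frobenius. If instead you retain the full $\Phi$-structure, the descent is immediate: the restricted section lies in $H^0_{\PhiaIsoc(X' \times_{\FF_p} \overline{x}'')}(\calF \boxtimes \calG|_{\overline{x}''})$, and its $\varphi_{\overline{x}''}^a$-invariance forces it first into $\calF$ tensored with the $\varphi^a$-fixed part of $\calG|_{\overline{x}''}$ (a finite-dimensional $\QQ_{p^a}$-vector space $V$, by Dieudonn\'e--Manin), and then, since $\varphi_{\overline{x}''}^a$ acts on the pullback of $\calF$ only through the coefficient ring $W(\kappa(\overline{x}''))$, into $H^0_{\PhiaIsoc(X')}(\calF) \otimes_{\QQ_{p^a}} V$. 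This last step is elementary (it is the $i=0$ case of what later appears as Lemma~\ref{L:cohomology pullback}: the $\sigma^a$-invariants of $W(L)[p^{-1}]$ are $\QQ_{p^a}$). No Schur-type argument about coefficient stability is needed; the partial Frobenius on the point factor does the work for you.

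Your route via trivialization can be pushed through (the $\Hom$ over $X' \times_{\FF_p} \overline{x}''$ with only the $X'$-Frobenii is indeed $\Hom_{\PhiaIsoc(X')} \otimes_{\QQ_{p^a}} W(\kappa(\overline{x}''))[p^{-1}]$, by flatness, since $\varphi_1^a,\dots$ act trivially on the second tensor factor), but it is a detour around a door that the $\Phi$-structure already opens.
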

\begin{proof}
By symmetry it suffices to check that $H^0(X', \calE^{\prime \dual}_1 \otimes \calE'_2) \neq 0$, which we may do after enlarging $k_1,\dots,k_n$.
We start with a nonzero element of $H^0(X, (\calE^{\prime \dual}_1 \otimes \calE'_2) \boxtimes (\calE^{\prime \prime \dual}_1 \otimes \calE''_2))$.
After enlarging $k_1,\dots,k_n$ suitably, we may pull back from $X''$ to a geometric point to obtain a nonzero element of
$H^0(X', \calE^{\prime \dual}_1 \otimes \calE'_2)$.
\end{proof}

Many of the following results are motivated by the principle that ``all objects on $\PhiaIsoc(X)$ are derived from external products.'' However, this principle is subject to qualifications illustrated by the following two examples
(compare Lemma~\ref{L:external product decomposition} and Definition~\ref{D:absolutely irreducible}).

\begin{example}
Take $n=2$ and $X_i := \Spec k_i$.  Let $\calE \in \PhiIsoc(X)$
be free of rank 2 on the basis $\be_1, \be_2$ with the actions of $\varphi_1, \varphi_2$ being given by
\begin{gather*}
\varphi_1(\be_1) = \be_1, \quad \varphi_1(\be_2) = p \be_2, \quad \varphi_2(\be_1) = \be_1, \quad \varphi_2(\be_2) = p \be_2,
\end{gather*}
Then $\calE$ cannot be factored as $\calE_1 \boxtimes \calE_2$ with $\calE_i \in \FIsoc(X_i)$; however, it can be written as an extension of two objects of this form.
\end{example}

\begin{example}
Take $n = 2$, suppose that $k_i$ is algebraically closed, and take $X_i := \Spec k_i$. Let $\calE \in \PhiIsoc(X)$
be free of rank 2 on the basis $\be_1, \be_2$ with the actions of $\varphi_1$ and $\varphi_2$ being given by
\[
\varphi_1(\be_1) = \be_2, \quad \varphi_1(\be_2) = p \be_1, \quad \varphi_2(\be_1) = \be_2, \quad \varphi_2(\be_2) = p \be_1.
\]
Then $\calE$ cannot be factored as $\calE_1 \boxtimes \calE_2$ with $\calE_i \in \FIsoc(X_i)$. 
For $a$ even,
the image of $\calE$ in $\PhiaIsoc(X)$ does factor as $\calE_1 \boxtimes \calE_2$ with $\calE_i \in \FaIsoc(X_i)$,
but \emph{not uniquely}; contrast with Lemma~\ref{L:same product}.
\end{example}

\section{Stratification by the diagonal Newton polygon}
\label{sec:diagonal NP}

We next use the natural functor $\PhiaIsoc(X) \to \FaIsoc(X)$, obtained by forgetting the partial Frobenius actions and retaining only their composition, to obtain some structural results about $\Phi$-isocrystals.
We start with a form of the Grothendieck--Katz specialization theorem \cite[Theorem~2.3.1]{katz-slope}.

\begin{defn}
For $S$ a perfect ring of characteristic $p$, let $\FIsoc(S)$
be the category of finite projective $W(S)[p^{-1}]$-modules equipped with  isomorphisms with their $\varphi^a$-pullback.
We may then globalize to define $\FIsoc(X)$ whenever $X$ is a perfect $\FF_p$-scheme.

For $M \in \FIsoc(X)$, define the Newton polygon as a function of $x \in \Spec X$ by viewing the pullback of $M$ to $x$ as an object of $\FaIsoc(x)$ and taking its Newton polygon.
\end{defn}

\begin{lemma} \label{L:grothendieck-katz}
For $S$ a perfect scheme of characteristic $p$ and $M \in \FIsoc(S)$, the Newton polygon function of $M$ takes finitely many values, and each level set is a locally closed subspace of $\Spec S$
whose Zariski closure is the zero set of some finitely generated ideal of $S$.
\end{lemma}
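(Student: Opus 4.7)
Assume $S = \Spec R$ with $R$ a perfect $\FF_p$-algebra.

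For finiteness, choose a $W(R)$-lattice $M_0 \subseteq M$ and $C \in \ZZ_{\geq 0}$ so that both $p^C F^a$ and $p^C (F^a)^{-1}$ preserve $M_0$. Every slope of $M_x$ at every point then lies in $[-C,C]$ and has denominator at most $\rank M$, so only finitely many Newton polygons can arise.

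For the topological content, I would adapt Katz's classical Hodge-polygon strategy \cite[Theorem~2.3.1]{katz-slope} to the perfect-ring setting. Via the exterior-power dictionary ($\NP(M_x) \succeq P$ iff the minimum slope of $\wedge^i M_x$ is $\geq P(i)$ for each $i$) and a twist $F^a \rightsquigarrow p^{-s}(F^a)^r$ on $\wedge^i M$ to shift slopes, the problem reduces to showing: for any $N \in \FIsoc(R)$, the set $\{x : \text{all slopes of } N_x \geq 0\}$ is the vanishing locus of a finitely generated ideal of $R$. Choose a $W(R)$-lattice $N_0 \subseteq N$ with a basis, write $F^a$ as a matrix $A$ with entries in $W(R)[1/p]$, and form the iterates $A^{(k)} := A \cdot \varphi^a(A) \cdots \varphi^{a(k-1)}(A)$. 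Mazur's inequality gives that the Hodge polygon of $A^{(k)}_x$ lies on or below $k \cdot \NP(N_x)$ pointwise, and Katz's limit theorem that the normalized Hodge polygons converge to $\NP(N)$; combined with the denominator bound on slopes, these yield a uniform $k_0$ (depending only on $\rank N$ and the $p$-adic denominators of $A$) such that the Hodge polygon of $A^{(k_0)}_x$ equals $k_0 \cdot \NP(N_x)$ at every $x$. The condition ``all slopes $\geq 0$'' then becomes ``every entry of $A^{(k_0)}(x)$ lies in $W(\kappa(x))$''; writing each entry as $a p^{-N}$ with $a \in W(R)$, this is the vanishing at $x$ of the first $N$ Witt components of $a$, a finite list of equations in $R$.

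This produces, for each polygon $P$ that arises, a finitely generated ideal $I^P \subseteq R$ with $V(I^P) = \{\NP(M_x) \succeq P\}$. Each level set $\{\NP = P\} = V(I^P) \setminus \bigcup_{P' \succ P} V(I^{P'})$ is then locally closed, as finitely many $P'$ appear by the first clause. For its Zariski closure to be $V(J^P)$ with $J^P$ finitely generated, I would spread out the data of $M$ to a finitely generated (hence noetherian) $\FF_p$-subalgebra $R_0 \subseteq R$, apply Katz's theorem on $\Spec R_0$ to obtain a f.g. ideal $J_0^P \subseteq R_0$ whose zero set is the closure of the noetherian $P$-stratum, and take $J^P := J_0^P R$; that $V(J^P)$ coincides with $\overline{\{\NP = P\}}$ in $\Spec R$ follows from base-change invariance of Newton polygons (a consequence of Dieudonn\'e--Manin, since slopes over an algebraically closed residue field are unchanged by further extension) together with a specialization-lifting argument for $\Spec R \to \Spec R_0$.

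The main obstacle is twofold. First, the uniformity of $k_0$ across all $x$---the technical core of Katz's argument---relies on the fact that Hodge polygons of iterates, normalized by $k$, take values in the finite set of possible Newton polygons (by the denominator bound) and must therefore stabilize in a number of iterations bounded in terms of $\rank N$ and $v_p$ of the entries of $A$ alone. Second, matching the Zariski closure in $\Spec R$ with the pullback of the noetherian closure requires lifting specializations along $\Spec R \to \Spec R_0$; this is handled by enlarging $R_0$ to include the generic points of each irreducible component of $V(J_0^P)$ together with the perfect closures of the relevant residue fields, which remain a finitely generated $\FF_p$-algebra since only finitely many polygons are involved.
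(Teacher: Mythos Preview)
Your overall strategy---reduce via exterior powers and twists to ``all slopes $\geq 0$,'' then express that as the vanishing of finitely many Witt coordinates---is the same as Katz's and the paper's. But your execution has two genuine gaps.

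First, your justification for the uniform $k_0$ is wrong. You claim that the normalized Hodge polygons of $A^{(k)}$ ``take values in the finite set of possible Newton polygons (by the denominator bound).'' They do not: the Hodge slopes of $A^{(k)}$ are integers, so after dividing by $k$ they are multiples of $1/k$, not rationals with denominator bounded by $\rank N$. Convergence of $\tfrac{1}{k}\mathrm{Hodge}(A^{(k)})$ to $\NP$ plus discreteness of the target does \emph{not} force equality at any finite stage, and certainly not at a stage bounded independently of $x$. The correct uniformity is Katz's Sharp Slope Estimate \cite[Theorem~1.6.1]{katz-slope}, which bounds the gap between $\tfrac{1}{k}\mathrm{Hodge}(A^{(k)})$ and $\NP$ explicitly in terms of $\rank N$ and the $p$-adic size of $A, A^{-1}$; this is what makes the condition on a single iterate equivalent to the Newton condition. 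You should invoke that estimate directly rather than the stabilization heuristic.

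Second, when you ``choose a $W(R)$-lattice $N_0 \subseteq N$ with a basis'' and form a global matrix $A$, you are assuming $N$ is free over $W(R)[1/p]$. Over a general perfect ring this fails, and your iterate argument does not make sense without a global matrix. The paper handles this by a different device: it writes $M$ as a summand of a free module $F$, puts a \emph{new} Frobenius on $F$ (trivial times a large power of $p$), and chooses the Frobenius on the complement $M'$ so that the Newton polygon of $M \oplus M'$ is controlled entirely by that of $M$ in the slope range of interest. Then one cites \cite[Theorem~2.3.1]{katz-slope} directly on the free module $F$. This trick is both shorter and avoids re-deriving Katz's estimate.

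Your spread-out argument for the Zariski closure is also shaky: you propose descending to a finitely generated $\FF_p$-subalgebra $R_0 \subseteq R$, but $R_0$ is then not perfect, so $W(R_0)$ is badly behaved and ``apply Katz's theorem on $\Spec R_0$'' does not parse in the framework you have set up. The paper sidesteps this entirely: once each set $\{\text{all slopes} \geq \lambda\}$ is $V(I_\lambda)$ with $I_\lambda$ finitely generated, the level sets and their closures are handled by finite Boolean combinations of such $V(I_\lambda)$ (using that only finitely many Newton polygons occur).
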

\begin{proof}
We may assume at once that $M$ has constant rank and (as per Remark~\ref{R:reduce to first power}) that $a=1$.
Fix a finitely generated $W(S)$-submodule $M_0$ of $M$ which generates $M$ over $W(S)[p^{-1}]$.
For some nonnegative integer $n$, the action of $p^n \varphi$ carries $M_0$ into $M_0$; for any such $n$,
the Newton slopes are bounded below by $-n$. Similarly, for some $n$, the action of $p^n \varphi^{-1}$ carries $M_0$ into $M_0$; for any such $n$, the Newton slopes are bounded above by $n$. Since the denominators of these slopes
are bounded by $\rank(M)!$ (because the Newton polygon at every point has integral vertices), 
there are only finitely many possible values for the Newton polygon.

With this, it suffices to check that for each $\lambda \in \QQ$, the set of $x \in X$ at which the Newton polygon has all slopes $\geq \lambda$ is the zero set of some finitely generated ideal. (Namely, one may then apply this logic to the
exterior powers of $M$.) In the case where $M$ is free, we may take $M_0$ to be free
and then apply \cite[Theorem~2.3.1]{katz-slope}.
In the general case, we choose a surjection $F \to M$ with $F$ a free module and an associated splitting
$F \cong M \oplus M'$; equip $F$ with the trivial action of $\varphi$ (determined by some basis) multiplied by $p^n$
(for $n$ suitably large); equip $M'$ with the action given by $M' \to F \stackrel{\varphi}{\to} F \to M'$;
then apply the previous case to $M \oplus M'$. By taking $n$ suitably large, we ensure that the set we compute
is determined solely by the action of $\varphi$ on $M$.
\end{proof}

We will apply this via the following lemma. Recall that the notation of Definition~\ref{D:product notation} remains in effect.
\begin{lemma} \label{L:completed perfect closure}
Suppose that $X_i$ is affine for $i=1,\dots,n$.
Then the completed direct limit of $\Gamma(P, \calO)$ along $\sigma := \sigma_1 \circ \cdots \circ \sigma_n$
is isomorphic to $W(S)$ for $S := \Gamma(X^{\perf}, \calO)$.
\end{lemma}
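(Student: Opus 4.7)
I will identify $A_\infty := (\varinjlim_\sigma \Gamma(P, \calO))^{\wedge_p}$ with $W(S)$ by invoking the uniqueness of strict $p$-rings over a perfect residue algebra. Write $A := \Gamma(P, \calO)$. Since each $P_i$ is smooth affine over $W(k_i)$, $A$ is $p$-adically complete and $p$-torsion free with $A/p = \Gamma(X, \calO)$; the composite $\sigma = \sigma_1 \circ \cdots \circ \sigma_n$ is a $\ZZ_p$-algebra endomorphism of $A$ whose reduction modulo $p$ is the absolute Frobenius on $X$ (the product of the partial Frobenii on each factor).

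First I would reduce modulo $p$. Since reduction modulo $p$ commutes both with filtered colimits and with $p$-adic completion,
\[
A_\infty/p \;\cong\; \varinjlim_\sigma (A/p) \;=\; \Gamma(X, \calO)^{\perf} \;=\; \Gamma(X^{\perf}, \calO) \;=\; S,
\]
which is perfect.

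Next I would verify that $A_\infty$ is $p$-torsion free. The colimit $A' := \varinjlim_\sigma A$ is $p$-torsion free since filtered colimits preserve this property. Given $\hat{x} = (x_k \bmod p^k A')_k \in A_\infty = \varprojlim_k A'/p^k A'$ with $p\hat{x} = 0$, we have $p x_k \in p^k A'$, so $p$-torsion freeness of $A'$ yields $x_k = p^{k-1} y_k$ for some $y_k \in A'$. The compatibility $x_{k+1} \equiv x_k \pmod{p^k}$ then forces $p^{k-1} y_k \in p^k A'$, and another application of $p$-torsion freeness gives $y_k \in p A'$; hence $x_k \in p^k A'$ and $\hat{x} = 0$.

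Having established that $A_\infty$ is a $p$-adically complete, $p$-torsion free $\ZZ_p$-algebra whose reduction modulo $p$ is the perfect $\FF_p$-algebra $S$, the uniqueness of strict $p$-rings with given perfect residue ring (i.e.\ the universal property of Witt vectors) supplies a canonical isomorphism $W(S) \xrightarrow{\sim} A_\infty$ induced by Teichm\"uller lifts. The main obstacle is the $p$-torsion freeness verified in the third step: since the tensor product over $\ZZ_p$ of several smooth formal affine algebras is generally not noetherian, one cannot simply invoke flatness of $p$-adic completion, and the hands-on manipulation of compatible sequences is the technical heart of the argument.
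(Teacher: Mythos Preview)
Your argument is correct and follows the same route as the paper: show the completed direct limit is a strict $p$-ring with residue ring $S$, then invoke the universal property of Witt vectors. The paper compresses all of this into one sentence (``the completed direct limit is a strict $p$-ring with residue ring $S$''), whereas you spell out the two ingredients---the identification $A_\infty/p \cong S$ and the $p$-torsion freeness of $A_\infty$---that justify the phrase ``strict $p$-ring.'' One small expository point: the assertion that reduction modulo $p$ commutes with $p$-adic completion is not true without hypotheses; what you actually use is that $A' = \varinjlim_\sigma A$ is $p$-torsion free (which you note at the start of the second step), so it would be cleaner to record that fact before computing $A_\infty/p$.
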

\begin{proof}
The completed direct limit is a strict $p$-ring with residue ring $S$; the identification
then follows from the universal property of Witt vector rings.
\end{proof}

\begin{remark}
Although we will not use this here, we point out a refinement of Lemma~\ref{L:completed perfect closure}: the map $\Gamma(P, \calO) \to W(S)$ is module-split. This implies that it is also \emph{pure} (meaning that for every $\Gamma(P, \calO)$-module $M$, the map $M \to M \otimes_{\Gamma(P, \calO)} W(S)$ is injective), and hence an effective descent morphism for modules
\stacktag{08XA}.

To wit, each $\sigma_i$ is module-split by the easy direction of Kunz's regularity criterion,
as then is $\sigma\colon \Gamma(P, \calO) \to \Gamma(P, \calO)$. Fixing a module splitting $\tau$,
we may then view $\tau^n$ as a module splitting of $\sigma^n$ for all $n$.
Taking these together, completing, and using the identification from Lemma~\ref{L:completed perfect closure} yields a module-splitting of $\Gamma(P, \calO) \to W(S)$.
\end{remark}

For the remainder of \S\ref{sec:diagonal NP}, fix $\calE \in \PhiaIsoc(X)$.

\begin{defn}
For $x \to X$ a point, we may pull back $\calE$ to $\FaIsoc(x)$
by retaining only the action of the \emph{diagonal Frobenius} $\varphi^a := \varphi_1^a \circ \cdots \circ \varphi_n^a$.
In particular, we may associate a \emph{diagonal Newton polygon} to this pullback.
\end{defn}

\begin{theorem} \label{T:total Newton stratification}
For $x \to X$ a point, the diagonal Newton polygon of $\calE$ at $x$
depends only on the images of $x$ in $X_1,\dots,X_n$. Moreover, the level sets of the diagonal Newton polygon 
form a locally closed stratification of $|X_1| \times \cdots \times |X_n|$.
\end{theorem}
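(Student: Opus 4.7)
The plan is to combine the Grothendieck--Katz specialization theorem (applied on $X^{\perf}$ via the diagonal Frobenius) with the partial Frobenius structure (to descend Newton polygons to the product) and Theorem~\ref{T:stable open} (to verify local closedness in the product topology).

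First, by Lemma~\ref{L:completed perfect closure}, the completed direct limit of $\Gamma(P, \calO)$ along $\sigma := \sigma_1 \circ \cdots \circ \sigma_n$ is $W(S)$ for $S := \Gamma(X^{\perf}, \calO)$. Tensoring $\calE$ with $W(S)[p^{-1}]$ and retaining only the diagonal Frobenius makes $\calE$ into an object of $\FaIsoc(X^{\perf})$, to which Lemma~\ref{L:grothendieck-katz} applies: the diagonal Newton polygon takes only finitely many values on $|X^{\perf}| = |X|$, each level set being locally closed.

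Second, for descent to $|X_1| \times \cdots \times |X_n|$, fix $x \in |X|$ with images $(x_1, \ldots, x_n)$. Because Newton polygons are invariant under algebraic closure of residue fields, I may replace each $x_i$ by the geometric point $\overline{x}_i := \Spec \overline{\kappa(x_i)}$; any geometric point above $x$ factors through $\overline{X}' := \overline{x}_1 \times_{\FF_p} \cdots \times_{\FF_p} \overline{x}_n \to X$, so it suffices to show the diagonal NP is constant on $|\overline{X}'|$. By Corollary~\ref{C:geometrically integral product}, $\pi_0(\overline{X}')$ is a principal homogeneous space for $\widehat{\ZZ}^n / \Delta\widehat{\ZZ}$, with the partial Frobenii $\varphi_i$ acting through the generators; combined with the isomorphisms $F_i\colon \varphi_i^{a*}\calE \cong \calE$ (which force $\varphi_i^a$-invariance of the NP) and the transitivity, the NP is constant on $\pi_0(\overline{X}')$, hence on all of $|\overline{X}'|$ (the latter being zero-dimensional with reduced structure).

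Third, for local closedness, fix $P$ and let $\widetilde{Z}_P := \{x \in |X| : \text{NP}(x) \geq P\}$, closed in $|X|$ by step one and (by step two) saturated with respect to the projection $\pi\colon |X| \to |X_1| \times \cdots \times |X_n|$. Since $\Frob_{X_i}$ is the identity on $|X_i|$, one has $\pi \circ \varphi_i = \pi$ on topological spaces, so every saturated set is automatically $\varphi_i$-stable. By Theorem~\ref{T:stable open}, applied locally to quasicompact sub-opens, the complement $\widetilde{Z}_P^c$ is a union of subschemes $U_1 \times_{\FF_p} \cdots \times_{\FF_p} U_n$ with $U_i$ open in $X_i$; its image in the product is then a union of $|U_1| \times \cdots \times |U_n|$, which is open in the product topology. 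Hence the descended $Z_P$ is closed, and the NP level sets (finite differences of such closed sets) are locally closed.

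The main obstacle I anticipate is the descent step: for $a > 1$, the subgroup $\langle \varphi_1^a, \ldots, \varphi_n^a \rangle \subseteq \widehat{\ZZ}^n / \Delta \widehat{\ZZ}$ may have positive index, so $\varphi_i^a$-invariance does not directly yield transitivity on $\pi_0(\overline{X}')$; some additional input---perhaps exploiting the full $\Phi^a$-structure across components, or reducing to $a=1$ via the adjoint functors of Remark~\ref{R:reduce to first power}---is likely needed. A secondary technical point is handling non-quasicompact stable opens in the possibly non-Noetherian $X$ when applying Theorem~\ref{T:stable open} in the third step, which requires care when covering by quasicompact pieces.
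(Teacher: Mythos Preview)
Your proposal follows the same route as the paper: pass to the perfection via Lemma~\ref{L:completed perfect closure}, apply Lemma~\ref{L:grothendieck-katz} to obtain a locally closed stratification of $|X|$, observe invariance under the partial Frobenius maps, and invoke Theorem~\ref{T:stable open}. The paper's proof is exactly this, compressed into three sentences.

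Two remarks. Your step~2 is not needed and the paper omits it: once Theorem~\ref{T:stable open} exhibits each open Newton stratum as a union of products $U_1 \times_{\FF_p} \cdots \times_{\FF_p} U_n$, the factorization of the Newton polygon through $|X_1| \times \cdots \times |X_n|$ is automatic, since such unions are precisely the $\pi$-saturated opens. Your quasicompactness worry is also handled without extra work: after reducing to affine $X_i$ (which the paper does first), Lemma~\ref{L:grothendieck-katz} guarantees that each Newton-closed set is cut out by a finitely generated ideal, so its complement is a quasicompact open and Theorem~\ref{T:stable open} applies directly.

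Your concern about $a > 1$ is well-placed and is in fact glossed over by the paper as well: the paper's proof only asserts $\varphi_i^a$-invariance before citing Theorem~\ref{T:stable open}, which as stated requires $\varphi_i$-stability. Neither the reduction to $a=1$ via Remark~\ref{R:reduce to first power} (the diagonal Newton polygon of the induced object at $x$ mixes the polygons of $\calE$ at several partial-Frobenius translates of $x$) nor a naive adaptation of Theorem~\ref{T:stable open} to $\varphi_i^a$-stable opens is entirely straightforward, so this point deserves attention; but it is not a defect of your argument relative to the paper's.
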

\begin{proof}
We may assume that $X_i$ is affine and set notation as in Definition~\ref{D:phi-isocrystal} and Lemma~\ref{L:completed perfect closure}.
By pulling back to $W(S)$, we may apply Lemma~\ref{L:grothendieck-katz} to obtain a locally closed stratification of $X$. As this stratification must be invariant under the action of $\varphi_1^a, \dots, \varphi_n^a$, we may apply Theorem~\ref{T:stable open} to deduce the desired result.
\end{proof}

\begin{defn}
We say that $\calE$ is \emph{diagonally unit-root} if the diagonal Newton polygon of $\calE$ is constant as a function of $|X|$
with all slopes equal to $0$.
\end{defn}

\begin{lemma} \label{L:spectrum valuation ring}
For any perfect scheme $S$ of characteristic $p$ and any positive integer $a$, the formula $V \mapsto V \otimes_{\underline{\ZZ_{p^a}}} W(\calO)[p^{-1}]$ defines an equivalence of categories between $\QQ_{p^a}$-local systems (interpreted here as finite locally free modules over the pro-\'etale locally constant sheaf $\underline{\QQ_{p^a}}$) 
and the full subcategory of unit-root objects of $\FaIsoc(S)$.
\end{lemma}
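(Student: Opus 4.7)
The plan is to prove this as a standard Katz-style equivalence, suitably globalized using the pro-\'etale topology on the perfect scheme $S$. The two main inputs would be: (i) an Artin--Schreier--Witt short exact sequence of pro-\'etale sheaves
\[
0 \to \underline{\ZZ_{p^a}} \to W(\calO) \xrightarrow{\varphi^a - 1} W(\calO) \to 0
\]
on $S_{\mathrm{pro\acute{e}t}}$, where surjectivity of $\varphi^a - 1$ is witnessed pro-\'etale-locally by adjoining Artin--Schreier--Witt roots (which is legitimate precisely because $S$ is perfect); and (ii) pro-\'etale local existence of a $\varphi^a$-stable $W(\calO)$-lattice on any unit-root object, together with surjectivity of $\varphi^a - 1$ on such a lattice.

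First I would check well-definedness of the functor. Given a $\QQ_{p^a}$-local system $V$ of rank $r$, trivialized on some pro-\'etale cover of $S$, the pro-\'etale sheaf $V \otimes_{\underline{\QQ_{p^a}}} W(\calO)[p^{-1}]$ is locally free of rank $r$ over $W(\calO)[p^{-1}]$, and its Frobenius (acting only on the second factor) makes it unit-root, since locally it is a direct sum of copies of $W(\calO)[p^{-1}]$ with its standard Frobenius.

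Next I would establish full faithfulness via the internal-Hom reduction: replacing $(V_1, V_2)$ by $V_1^\dual \otimes V_2$, the claim reduces to showing that for any $\QQ_{p^a}$-local system $V$,
\[
\bigl(H^0(S_{\mathrm{pro\acute{e}t}}, V \otimes_{\underline{\QQ_{p^a}}} W(\calO)[p^{-1}])\bigr)^{\varphi^a = 1} = H^0(S_{\mathrm{pro\acute{e}t}}, V).
\]
This follows from the Artin--Schreier--Witt sequence above, which identifies $W(\calO)^{\varphi^a = 1}$ with $\underline{\ZZ_{p^a}}$ on the pro-\'etale site, combined with flatness of the locally free sheaf $V$.

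The substantive step is essential surjectivity. Given a unit-root $M \in \FaIsoc(S)$, I would form the pro-\'etale sheaf $\calV(M) := \ker(\varphi^a - 1 \colon M \to M)$ and argue that it is a $\QQ_{p^a}$-local system of rank $\rank(M)$, with the natural comparison map $\calV(M) \otimes_{\underline{\QQ_{p^a}}} W(\calO)[p^{-1}] \to M$ an isomorphism. Pro-\'etale-locally, $M$ admits a $\varphi^a$-stable $W(\calO)$-lattice $M^\circ$; passing to a further Artin--Schreier--Witt cover, one may ensure that $\varphi^a - 1$ is surjective on $M^\circ/p^n$ for every $n$. Taking inverse limits over $n$ gives $(M^\circ)^{\varphi^a=1}$ a finite free $\ZZ_{p^a}$-module of the correct rank whose scalar extension to $W(\calO)$ recovers $M^\circ$. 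Inverting $p$ and pro-\'etale-gluing produces the desired local system and the isomorphism.

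The main obstacle is the local construction of a $\varphi^a$-stable lattice on which $\varphi^a - 1$ is surjective: this is where perfectness of $S$ is essential (so that Artin--Schreier--Witt covers are pro-\'etale) and where one uses a Dieudonn\'e--Manin-style trivialization argument after an appropriate pro-\'etale base change. Once this is in place, the comparison isomorphism follows by checking ranks and using flatness of $W(\calO)[p^{-1}]$ over $\underline{\QQ_{p^a}}$ on the pro-\'etale site.
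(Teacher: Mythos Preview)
Your outline follows the classical Katz pattern and is internally coherent, but the route is genuinely different from the paper's, and the step you flag as ``the main obstacle'' is in fact left unresolved.

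The paper does not work pro-\'etale-locally at all. Instead it proceeds in two stages: first it treats directly the case where $S = \Spec R$ with $R$ a strictly henselian perfect valuation ring, building the $\varphi^a$-stable lattice over the fraction field via Katz's Basic Slope Estimate \cite[1.4.3]{katz-slope} and then extending it across $\Spec R$ using the structure theorem for modules over $W$ of a valuation ring \cite[Theorem~2.7]{kedlaya-ainf}. Second, it globalizes not by pro-\'etale descent but by \emph{arc-descent}: effective descent for finite projective $W(\calO)[p^{-1}]$-modules in the arc-topology \cite[Proposition~5.9]{ivanov}, combined with the fact that every perfect affine $\FF_p$-scheme admits an arc-cover by a product of AIC height-1 valuation rings \cite{bhatt-mathew}. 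This reduces the general statement to the valuation-ring case already handled.

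The gap in your proposal is precisely at the sentence ``Pro-\'etale-locally, $M$ admits a $\varphi^a$-stable $W(\calO)$-lattice $M^\circ$.'' You attribute this to ``a Dieudonn\'e--Manin-style trivialization argument after an appropriate pro-\'etale base change,'' but Dieudonn\'e--Manin is a statement over algebraically closed fields, and pro-\'etale covers of a general perfect scheme do not let you reduce to fields: the local rings you reach are $w$-contractible (products of strictly henselian local rings), which can have arbitrarily complicated spectra. Over such a ring it is not clear, and you have not argued, that an arbitrary unit-root object acquires a $\varphi^a$-stable lattice; the na\"{\i}ve construction $\sum_{n \geq 0} \varphi^{an}(M_0)$ need not be bounded without a uniformity argument you have not supplied. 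The arc-topology is exactly what buys the reduction to valuation rings where the lattice can be built by hand; if you want to stay pro-\'etale, you owe a separate argument that unit-root $F$-isocrystals over $w$-contractible perfect rings admit such lattices, and that argument is the real content of the lemma.
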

\begin{proof}
Suppose first that $S$ is the spectrum of a strictly henselian perfect valuation ring $R$ of characteristic $p$. 
Let $\eta$ and $s$ be the generic point and the closed point of $S$. Given an object $M$ of $\FaIsoc(S)$,
its pullback to $\FaIsoc(\eta)$ is the base extension of $M$ to $W(\Frac(R))[p^{-1}]$. If we start with a unit-root object, then
we may choose a lattice in the base extension stable under $\varphi^a$ and $\varphi^{-a}$ 
(by starting with any lattice and taking the span of its images under $\varphi^{a\ZZ}$, using \cite[Basic Slope Estimate 1.4.3]{katz-slope} to see that this span is bounded).
We may then invoke \cite[Theorem~2.7]{kedlaya-ainf} to obtain a finite free $W(R)$-module equipped with a semilinear $\varphi^a$-action;
this then defines an object of $\FaIsoc(S)$ which is the pullback of a unit-root object of $\FaIsoc(s)$.
Since $s$ is a geometric point, this proves the claim.

To deduce the general case, we first introduce the \emph{arc-topology} on the category of perfect affine $\FF_p$-schemes in the sense of \cite{bhatt-mathew}
(previously considered in \cite{rydh}).
For this topology, one has effective descent for the stack of finite projective $W(\calO)[p^{-1}]$-modules \cite[Proposition~5.9]{ivanov}. Using \cite[Example~1.3 and Proposition~3.30]{bhatt-mathew}, we see that every perfect affine $\FF_p$-scheme admits an arc-covering by a product of AIC (absolutely integrally closed) height-1 valuation rings.

By applying arc-descent once, we see that the formula in question does indeed define a functor valued in $\FaIsoc(S)$; it is then apparent that the essential image contains only unit-root objects. To obtain the claimed equivalence, we again use arc-descent to reduce to the case where $S$ is the spectrum of a product of AIC height-1 valuation rings; we then formally reduce to the case where $S$ is the product of a single AIC height-1 valuation ring (as in the proof of \cite[Theorem~6.1]{ivanov}). We then apply the first paragraph to conclude.
\end{proof}

\begin{lemma} \label{L:diagonal fully faithful}
Put $S := \Gamma(X^{\perf}, \calO)$.
Let $M$ be a finite projective $\Gamma(P, \calO)[p^{-1}]$-module
equipped with an isomorphism with its $\varphi^a$-pullback.
Suppose that $\tilde{M} := M \otimes_{\Gamma(P, \calO)} W(S)$ admits
a finite projective $W(S)$-submodule $\tilde{M}_0$ such that 
$\varphi^a$ carries $\tilde{M}_0$ into itself and
the natural map $\tilde{M}_0 \otimes_{W(S)} W(S)[p^{-1}] \to \tilde{M}$ is an isomorphism.
Then every $\bv \in \tilde{M}$ with $(\varphi^a-1)(\bv) \in M$ itself belongs to $M$.
\end{lemma}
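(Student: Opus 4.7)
The plan is to use an auxiliary extension to reduce the lemma to the special case $\bw=0$, which is the ``invariant descent'' statement $\tilde M^{\varphi^a=1}\subseteq M$. Given $\bv \in \tilde M$ with $(\varphi^a-1)\bv = \bw \in M$, form the auxiliary $F^a$-Frobenius module $\mathcal{N} := M \oplus \Gamma(P,\calO)[p^{-1}]\be$ with $\varphi^a(\be) := \be + \bw$. The pullback $\tilde{\mathcal{N}} = \tilde M \oplus W(S)[p^{-1}]\tilde\be$ contains the $\varphi^a$-fixed element $\tilde\be-\bv$, since $\varphi^a(\tilde\be-\bv) = (\tilde\be+\bw) - (\bv+\bw) = \tilde\be-\bv$. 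Choosing $N \geq 0$ with $\bv_0 := p^N\bv \in \tilde M_0$, the $W(S)$-submodule $\tilde{\mathcal{N}}_0 := \tilde M_0 \oplus W(S)(p^N\tilde\be - \bv_0)$ is a $\varphi^a$-stable lattice in $\tilde{\mathcal{N}}$, so $\mathcal{N}$ also satisfies the hypotheses of the lemma. Applying the special case to the $\varphi^a$-fixed element $p^N\tilde\be - \bv_0 \in \tilde{\mathcal{N}}$ yields $p^N\tilde\be - \bv_0 \in \mathcal{N} = M \oplus \Gamma(P,\calO)[p^{-1}]\be$; comparing $\tilde M$-components forces $p^N\bv \in M$, whence $\bv \in M$.

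For the special case $\bw=0$, take $\bv \in \tilde M^{\varphi^a=1}$ and rescale to assume $\bv \in \tilde M_0$. Since $\bv = \varphi^{ka}(\bv) \in \varphi^{ka}(\tilde M_0)$ for every $k \geq 0$, a pointwise Dieudonn\'e--Manin argument forces $\bv$ into the unit-root (slope-$0$) part of $\tilde M_0$, which at each geometric point of $X^{\mathrm{perf}}$ is the maximal $\varphi^a$-stable sublattice on which $\varphi^a$ is an isomorphism. By Lemma~\ref{L:spectrum valuation ring}, this unit-root part corresponds to a $\QQ_{p^a}$-\'etale local system $\calL$ on $X^{\mathrm{perf}}$, and $\bv$ gives a global section of $\calL$. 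Since the perfection $X^{\mathrm{perf}} \to X$ is a universal homeomorphism, the pro-\'etale topoi of $X$ and $X^{\mathrm{perf}}$ agree; matched with the analogous unit-root sub-object of $M$ inside a $\Gamma(P,\calO)$-lattice, $\bv$ descends to an element of $M^{\varphi^a=1} \subseteq M$.

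The main obstacle is rigorously identifying the unit-root ``sub-object'' globally in the non-Noetherian product setting, where the decreasing chain $\bigcap_k \varphi^{ka}(\tilde M_0)$ need not stabilize. A workable approach is to combine Lemma~\ref{L:restrict to point} (to localize to geometric points of $X$, where Dieudonn\'e--Manin applies unambiguously) with the arc-descent strategy of Lemma~\ref{L:spectrum valuation ring} (to patch the pointwise identifications globally via the pro-\'etale topology on $X^{\mathrm{perf}}$).
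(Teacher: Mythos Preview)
Your reduction in the first paragraph to the special case $\bw=0$ is correct and cleanly executed. The gap is in the second paragraph, at the sentence ``matched with the analogous unit-root sub-object of $M$ inside a $\Gamma(P,\calO)$-lattice, $\bv$ descends to an element of $M^{\varphi^a=1}$.'' No such sub-object of $M$ is given by the hypotheses, and producing one is tantamount to showing that the embedding $\calL \hookrightarrow \tilde{M}$ of the unit-root local system factors through $M$---which is exactly the statement under proof. The pro-\'etale equivalence between $X$ and $X^{\perf}$ identifies \emph{local systems} on the two spaces, but says nothing about how a specific local system sits inside $M$ versus $\tilde{M}$. Your third paragraph does not rescue this: Lemma~\ref{L:restrict to point} compares $M$ with a localization $M \otimes \Gamma(Q,\calO)$, not with $\tilde{M}$, and at any geometric point of $X$ the fibers of $M$ and $\tilde{M}$ coincide, so pointwise checking cannot distinguish them. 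Arc-descent likewise operates within the $W(S)$-world and does not descend across $\Gamma(P,\calO) \hookrightarrow W(S)$. (Note also that this lemma is used in the paper precisely to establish the full faithfulness in Theorem~\ref{T:unit-root representation}, so any appeal to unit-root descent from $X^{\perf}$ to $X$ risks circularity.)

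The paper's proof is entirely different and much more elementary: a direct $p$-adic successive approximation with no slope theory, no local systems, and no reduction to $\bw=0$. After scaling so that $\bv \in \tilde{M}_0$, it suffices to write $\bv = \bv_0 + p\bv_1$ with $\bv_0 \in M$ and $\bv_1 \in \tilde{M}_0$, since then one can iterate and use that $M$ is $p$-adically closed in $\tilde{M}$. By Lemma~\ref{L:completed perfect closure}, $W(S)$ is the $p$-completed direct limit of $\Gamma(P,\calO)$ along $\sigma$, so one always has such a decomposition with $\bv_0 \in \varphi^{-an}(M)$ for some $n\ge 0$; the single identity
\[
\bv_0 \;=\; \varphi^a(\bv_0) \;-\; (\varphi^a-1)(\bv) \;+\; p\bigl(\varphi^a(\bv_1)-\bv_1\bigr)
\]
then replaces $n$ by $n-1$, since $\varphi^a(\bv_0) \in \varphi^{-a(n-1)}(M)$, the middle term lies in $M$ by hypothesis, and the last term is absorbed into $p\tilde{M}_0$.
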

\begin{proof}
It will suffice to check the claim for $\bv \in \tilde{M}_0$.
It will further suffice to show that $\bv = \bv_0 + p\bv_1$ for some $\bv_0 \in M, \bv_1 \in \tilde{M}_0$.
Namely, we can then iterate to show that $\bv$ is a convergent (in $\tilde{M}$) sum of elements of $M$, and hence itself belongs to $M$.

To prove the claim, note that \emph{a priori} we have a similar representation where $\bv_0 \in \varphi^{-an}(M)$ for some nonnegative integer $n$; it will further suffice to show that if $n > 0$, then the same holds with $n$ replaced by $n-1$. This follows by writing
\[
\bv_0 =  \varphi^a(\bv_0) - (\varphi^a-1)(\bv) + p(\varphi^a(\bv_1) - \bv_1). \qedhere
\]
\end{proof}

\begin{theorem} \label{T:unit-root representation}
Suppose that for $i=1,\dots,n$, $k_i$ is algebraically closed and $X_i$ is connected.
Fix a geometric point $\overline{x}$ of $X$ and let $X_0$ be the connected component of $X$ over which $\overline{x}$ lies.
Then the following three categories are equivalent.
\begin{enumerate}
\item[(a)]
The full category of diagonally unit-root objects of $\PhiaIsoc(X)$.
\item[(b)]
The full category of unit-root objects of $\FaIsoc(X^{\perf})$ equipped with
commuting semilinear actions of $\varphi_1^a,\dots,\varphi_n^a$ which compose to the action of $\varphi^a$ (i.e., the ``diagonally unit-root objects of $\PhiaIsoc(X^{\perf})$'').
\item[(c)]
The category of $\QQ_{p^a}$-local systems on $X$ (or equivalently $X^{\perf}$)
equipped with commuting actions of $\varphi_1^a,\dots,\varphi_n^a$ which compose to the canonical action of $\varphi^a$.
\item[(d)]
The category of
continuous representations of $G := \pi_1(X_0, \overline{x})$
on finite-dimensional $\QQ_{p^a}$-vector spaces $V$ equipped with 
$n$ commuting $G$-equivariant automorphisms which compose to the identity.
\end{enumerate}
\end{theorem}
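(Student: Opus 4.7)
I plan to establish the chain of equivalences $(\textrm{a}) \Leftrightarrow (\textrm{b}) \Leftrightarrow (\textrm{c}) \Leftrightarrow (\textrm{d})$. The principal tools are Lemma~\ref{L:spectrum valuation ring} (bridging the étale and Witt-vector pictures on the perfection), Lemma~\ref{L:diagonal fully faithful} (controlling descent along $X^{\perf} \to X$), and the fact that each $\varphi_i$ is a universal homeomorphism (so that a partial Frobenius structure on a local system reduces to a distinguished automorphism).

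For $(\textrm{b}) \Leftrightarrow (\textrm{c})$, I would apply Lemma~\ref{L:spectrum valuation ring} to $X^{\perf}$ with the given exponent $a$; this yields an equivalence between $\QQ_{p^a}$-local systems on $X^{\perf}$ (identified with those on $X$, since perfection does not alter the étale site) and unit-root objects of $\FaIsoc(X^{\perf})$. The functor $V \mapsto V \otimes_{\underline{\ZZ_{p^a}}} W(\calO)[p^{-1}]$ is natural in endomorphisms of the scheme, so commuting partial Frobenius actions on $V$ correspond precisely to commuting partial Frobenius actions on the associated $F^a$-isocrystal, with the composition condition matching on both sides.

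For $(\textrm{c}) \Leftrightarrow (\textrm{d})$, since each $\varphi_i$ is a universal homeomorphism (being the base change of an absolute Frobenius), the pullback $\varphi_i^{a*}$ is canonically the identity functor on the étale site. The datum $F_i\colon \varphi_i^{a*}L \cong L$ thus amounts to a distinguished automorphism of $L$, compatible with the action of $\varphi_i^a$ on $\pi_0(X)$. Restriction to $X_0$ produces the $G$-representation $V$ of (d) together with $n$ commuting $G$-equivariant automorphisms composing to the identity. Conversely, given $(V, f_1, \ldots, f_n)$, one reconstructs $L$ by transporting $V$ along the $f_i$'s across the $\langle \varphi_i^a \rangle$-orbit of $X_0$ and propagating the Frobenius data; the commuting and composition relations guarantee well-definedness.

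For $(\textrm{a}) \Leftrightarrow (\textrm{b})$, I would use pullback along $X^{\perf} \to X$. Full faithfulness reduces, via internal Homs, to showing that $H^0(X, \calE) \to H^0(X^{\perf}, \calE^{\perf})$ is a bijection for diagonally unit-root $\calE$; this is exactly the content of Lemma~\ref{L:diagonal fully faithful}, in which the required $\varphi^a$-stable lattice over $W(S)$ is supplied by the unit-root hypothesis. For essential surjectivity, I would exploit Lemma~\ref{L:completed perfect closure} to view $W(S)$ as the $p$-adically completed $\sigma$-direct limit of $\Gamma(P, \calO)$, and apply iterated Frobenius descent to a $\varphi^a$-stable lattice to produce a finite projective $\Gamma(P, \calO)[p^{-1}]$-module with compatible Frobenius; the integrable connection is then recovered from the Frobenius via the standard Berthelot-type descent, while the commuting partial Frobenius iso's descend in parallel. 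The principal obstacle is this essential surjectivity step: one must carefully propagate the partial Frobenius compatibilities through the Frobenius descent so that the reconstructed $\Phi^a$-isocrystal simultaneously carries horizontality for the newly-built connection, the correct diagonal Frobenius, and all the commuting partial Frobenius structures.
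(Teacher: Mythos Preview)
Your overall architecture matches the paper's: $(\textrm{b}) \Leftrightarrow (\textrm{c})$ via Lemma~\ref{L:spectrum valuation ring}, $(\textrm{c}) \Leftrightarrow (\textrm{d})$ by analyzing partial Frobenius on local systems, and $(\textrm{a}) \Leftrightarrow (\textrm{b})$ via Lemma~\ref{L:diagonal fully faithful} plus a Katz--Crew style construction for essential surjectivity.

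There is, however, a genuine gap in your $(\textrm{c}) \Leftrightarrow (\textrm{d})$ step. By Corollary~\ref{C:geometrically integral product}, $\pi_0(X)$ is a \emph{profinite} set, a torsor for $\coker(\Delta\colon \widehat{\ZZ} \to \widehat{\ZZ}^n) \cong \widehat{\ZZ}^{n-1}$. The orbit of $X_0$ under the discrete group generated by the $\varphi_i^a$ is only the dense $\ZZ^{n-1}$-subset of $\pi_0(X)$, not all of it. Consequently, ``transporting $V$ along the $f_i$'s across the $\langle \varphi_i^a\rangle$-orbit of $X_0$'' produces the local system only on a dense set of components; you have not explained why it extends continuously to the remaining components, nor why restriction-then-transport recovers the original $L$ there. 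The paper avoids this trap by describing a $\QQ_{p^a}$-local system on $X$ as a finite projective module over $R := \Cts(\pi_0(X), \QQ_{p^a})$ with continuous $G$-action, so that the functor $(\textrm{d}) \to (\textrm{c})$ is the base extension $V \mapsto V \otimes_{\QQ_{p^a}} R$. Essential surjectivity then becomes the assertion that any such $R$-module with commuting translation-isomorphisms descends to $\QQ_{p^a}$; this is exactly what Lemma~\ref{L:isomorphic representations} supplies.

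For $(\textrm{a}) \Leftrightarrow (\textrm{b})$, the paper's execution differs slightly from yours. Rather than descending a lattice directly along the map $\Gamma(P,\calO) \to W(S)$, the paper first uses the already-established equivalence $(\textrm{b}) \Leftrightarrow (\textrm{d})$ to guarantee the existence of a $\varphi^a$-stable lattice (this is the input Lemma~\ref{L:diagonal fully faithful} needs), and then factors the functor $(\textrm{d}) \to (\textrm{b})$ through $(\textrm{a})$ by invoking \cite[Proposition~4.1.1]{katz-modular} as in \cite[Theorem~2.1]{crew-f}. Your ``Berthelot-type descent'' is the same mechanism in spirit, but you should make the dependence on Katz's construction explicit rather than leaving it as the acknowledged ``principal obstacle.''
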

\begin{proof}
By  Lemma~\ref{L:spectrum valuation ring}, the category of unit-root objects of 
$\FaIsoc(X^{\perf})$ is equivalent to the category of $\QQ_{p^a}$-local systems on $X^{\perf}$. This yields the equivalence between (b) and (c).

By Corollary~\ref{C:geometrically integral product}, $\pi_0(X)$ is a torsor for 
the group $\coker(\Delta\colon\widehat{\ZZ} \to \widehat{\ZZ}^n) \cong \widehat{\ZZ}^{n-1}$,
and a $\QQ_{p^a}$-local system on $X$ can be described
concretely as a finite projective module $M$ over the ring $R := \Cts(\pi_0(X), \QQ_{p^a})$
equipped with a continuous homomorphism $\rho\colon G \to \Aut_R(M)$.
An action of the $i$-th partial Frobenius on a unit-root $F^a$-isocrystal on $X$
then corresponds to an isomorphism of the corresponding module $M$ with its pullback under the ring automorphism of $R$ induced by translation by the $i$-th generator of $\widehat{\ZZ}^n$.
In particular, an object of (c)  corresponds to a finite projective $R$-module $M$ (now necessarily of constant rank) equipped with a continuous $G$-action plus commuting $G$-equivariant isomorphisms
with its pullbacks by the translation automorphisms which compose to the identity.
With this description, there is an evident fully faithful base extension functor from (d) to (c); 
this functor is also essentially surjective.
This yields the equivalence between (c) and (d).

As a corollary of the equivalence between (b) and (d), we deduce that
for $X$ affine and $S := \Gamma(X^{\perf}, \calO)$, every object $M$ of $\PhiaIsoc(X^{\perf})$
admits a $\varphi^a$-stable lattice, meaning a finite projective $W(S)$-submodule  $M_0$ such that the induced map $M_0 \otimes_{W(S)} W(S)[p^{-1}] \to M$ is an isomorphism and the action of $\varphi^a$ on $M$ induces an isomorphism $\varphi^a M_0 \cong M_0$. (Note however that $M_0$ is not necessarily stable under the actions of the individual $\varphi_i^a$.)

Using the previous paragraph together with Lemma~\ref{L:diagonal fully faithful}, we deduce that the base extension functor from (a) to (b) is fully faithful (whether or not $X$ is affine). To establish that this functor is also essentially surjective, it suffices to factor the equivalence from (d) to (b) through (a);
for this we apply \cite[Proposition~4.1.1]{katz-modular}
as in the proof of \cite[Theorem~2.1]{crew-f}.
\end{proof}

We next adapt Katz's construction of slope filtrations \cite[Corollary~2.6.2]{katz-slope}.
\begin{lemma} \label{L:katz-filtration}
Let $S$ be a perfect ring of characteristic $p$.
Let $M$ be a finite projective $W(S)[p^{-1}]$-module equipped with an isomorphism with its $\varphi^a$-pullback.
Suppose that the function assigning to $x \in \Spec S$ the Newton polygon of the pullback of $M$ to $x$ is constant.
Then there exist a uniquely split filtration
\[
0 = M_0 \subset \cdots \subset M_l = M
\]
of $M$ by $\varphi^a$-stable submodules and an ascending sequence $\mu_1 < \cdots < \mu_l$ of rational numbers
such that for $j=1,\dots,l$, $M_j/M_{j-1}$ is a finite projective $W(S)[p^{-1}]$-module and the Newton polygon of its pullback to any $x \in \Spec S$ has all slopes equal to $\mu_j$. Moreover, the filtration and sequence are both uniquely determined by this requirement. (We call this the \emph{slope filtration} of $M$.)
\end{lemma}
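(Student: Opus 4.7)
The plan is to adapt Katz's slope filtration construction \cite[Corollary~2.6.2]{katz-slope} to the non-noetherian perfect setting, building the filtration one step at a time and using arc-descent to reduce to a situation in which Dieudonn\'e--Manin applies directly.

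First I would reduce to producing a single step: a $\varphi^a$-stable finite projective $W(S)[p^{-1}]$-submodule $N \subseteq M$ whose pullback to every $x \in \Spec S$ has all Newton slopes equal to $\mu_1$ (the smallest slope), and such that $M/N$ is finite projective with constant Newton polygon of slopes strictly greater than $\mu_1$. Iterating the construction on $M/N$ yields the full filtration (and inductively determines $l$ and the $\mu_j$); uniqueness of the filtration then reduces to uniqueness of $N$, which I would characterize as the largest $\varphi^a$-stable submodule of constant slope $\mu_1$.

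To construct $N$ globally, I would use the arc-descent for finite projective $W(\calO)[p^{-1}]$-modules \cite[Proposition~5.9]{ivanov} together with the existence of arc-covers by products of AIC height-1 perfect valuation rings \cite[Example~1.3, Proposition~3.30]{bhatt-mathew}, as in the proof of Lemma~\ref{L:spectrum valuation ring}. After passing to such a cover, I may assume $S = R$ is a single AIC height-1 perfect valuation ring with perfect algebraically closed fraction field $K$. The Dieudonn\'e--Manin classification at the generic point gives a canonical slope decomposition of $M_K := M \otimes_{W(R)[1/p]} W(K)[1/p]$; let $V_1 \subseteq M_K$ be the slope-$\mu_1$ isoclinic summand, and set $N := M \cap V_1$. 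Since $V_1$ is canonical over $K$, so is $N$, and the local pieces produced over the arc-cover automatically satisfy the descent condition and glue to give $N$ over the original $S$.

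The main obstacle will be proving that $N$ and $M/N$ are finite projective $W(R)[p^{-1}]$-modules of the expected ranks; this is where constancy of the Newton polygon enters. By replacing $a$ with a multiple so that $\mu_1 = c/a$ for some $c \in \ZZ$, and Tate-twisting $M$ by $p^{-c}$ (shifting all slopes by $-\mu_1$), one may reduce to the case $\mu_1 = 0$, so that $N$ becomes the unit-root subobject of the twisted module. Lemma~\ref{L:spectrum valuation ring} identifies unit-root objects of $\FaIsoc(\Spec R)$ with $\QQ_{p^a}$-local systems, so the construction of $N$ amounts to extracting the slope-$0$ part as a local system; its rank matches the (constant) multiplicity of $\mu_1 = 0$ in the Newton polygon at both the generic and closed points of $R$, which guarantees that $N$ has the correct rank and that $M/N$ is finite projective with constant Newton polygon of strictly larger minimum slope. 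Arc-descent then glues the local constructions into a global filtration on the original $M$, and the induction completes the proof.
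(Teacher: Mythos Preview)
Your arc-descent reduction to an AIC height-1 valuation ring $R$ is the same as the paper's. The divergence, and the gap, is in what happens over such an $R$.

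You define $N = M \cap V_1$ via the Dieudonn\'e--Manin decomposition at the generic point and then assert that, after twisting to $\mu_1 = 0$, Lemma~\ref{L:spectrum valuation ring} lets you ``extract the slope-$0$ part as a local system.'' But that lemma only says that a unit-root object \emph{already known} to lie in $\FaIsoc(\Spec R)$ comes from a local system; it gives no mechanism for carving out a unit-root subobject of a general $M$, and in particular does not show that the intersection $M \cap V_1$ is a finite projective $W(R)[p^{-1}]$-module. The constancy of the Newton polygon tells you what the rank \emph{should} be, but not that $N$ is finitely generated, or that $M/N$ is projective, over this non-noetherian ring. This is the step where real work is needed, and your proposal does not supply it.

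The paper handles this point quite differently and more directly: over such $R$, it invokes \cite[Theorem~2.7]{kedlaya-ainf} to see that $M$ is actually \emph{free} and admits a basis in which the Hodge polygon of $\varphi^a$ equals the Newton polygon; with that adapted basis in hand, Katz's \cite[Theorem~2.4.2]{katz-slope} produces the filtration and \cite[Theorem~2.5.1]{katz-slope} gives the unique splitting. You also do not address the unique splitting at all, which is part of the statement. If you want to rescue the intersection approach, you would still need the freeness from \cite{kedlaya-ainf} (or an equivalent structural result for $W(R)[p^{-1}]$-modules) to control $M \cap V_1$, at which point the paper's route is shorter.
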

\begin{proof}
Using arc-descent as in the proof of Lemma~\ref{L:spectrum valuation ring}, we reduce to the case where $S$ is an AIC height-1 valuation ring.
By \cite[Theorem~2.7]{kedlaya-ainf}, any finite projective $W(S)[p^{-1}]$-module is free; moreover, we can choose a basis for which the Hodge polygon of $\varphi^a$ coincides with the Newton polygon. We may then apply \cite[Theorem~2.4.2]{katz-slope} to obtain the desired filtration and \cite[Theorem~2.5.1]{katz-slope} to obtain the unique splitting.
\end{proof}

\begin{theorem} \label{T:total slope filtration}
Suppose that the diagonal Newton polygon of $\calE$ is constant as a function on $|X|$. (By Theorem~\ref{T:total Newton stratification}, this function always factors through $|X_1| \times \cdots \times |X_n|$.)
Then there exist a filtration
\[
0 = \calE_0 \subset \cdots \subset \calE_l = \calE
\]
in $\PhiaIsoc(X)$ and an ascending sequence $\mu_1 < \cdots < \mu_l$ of rational numbers such that for $j=1,\dots,l$,
the diagonal Newton polygon of $\calE_j/\calE_{j-1}$ is everywhere equal to a polygon with all slopes $\mu_j$. Moreover, the filtration and sequence are both uniquely determined by this requirement. (We call this the \emph{diagonal slope filtration} of $\calE$.)
\end{theorem}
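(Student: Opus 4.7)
Working Zariski-locally (using that $\PhiaIsoc$ is a stack), I may assume each $X_i$ is affine. Choose a product formal lift $P = P_1 \times \cdots \times P_n$ with commuting Frobenius lifts $\sigma_i$ and represent $\calE$ as a finite projective $\Gamma(P,\calO)[p^{-1}]$-module $M$ equipped with integrable connection and commuting partial Frobenius isomorphisms $F_i$. Set $S := \Gamma(X^{\perf},\calO)$ and $\tilde M := M \otimes_{\Gamma(P,\calO)} W(S)[p^{-1}]$, which by Lemma~\ref{L:completed perfect closure} is the base extension to the completed perfection and carries a diagonal Frobenius $\varphi^a$. The Newton polygon of $\tilde M$ at $\tilde x \in \Spec S$ equals the diagonal Newton polygon of $\calE$ at the image of $\tilde x$ in $X$; by hypothesis this is constant on $\Spec S$. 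Hence Lemma~\ref{L:katz-filtration} yields a uniquely split slope filtration $0 = \tilde M_0 \subset \cdots \subset \tilde M_l = \tilde M$ with slopes $\mu_1 < \cdots < \mu_l$; by uniqueness of Katz's filtration, each partial Frobenius $F_i$ and the connection preserve this filtration.

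I now argue by induction on $l$, splitting off the lowest-slope piece at each step. Using Remark~\ref{R:reduce to first power} to enlarge $a$ as necessary, I twist each $F_i$ by a suitable power of $p$ so that the smallest slope $\mu_1$ becomes $0$. Then $\tilde M_1$ is diagonally unit-root and, by the equivalence (a)$\Leftrightarrow$(b) of Theorem~\ref{T:unit-root representation}, is the pullback to $X^{\perf}$ of a diagonally unit-root $\calE_1' \in \PhiaIsoc(X)$; reversing the twist gives $\calE_1 \in \PhiaIsoc(X)$ with pullback $\tilde M_1$.

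To realize $\calE_1$ as a subobject of $\calE$ in $\PhiaIsoc(X)$, I consider $\calE_1^{\dual} \otimes \calE \in \PhiaIsoc(X)$; its pullback $\tilde M_1^{\dual} \otimes \tilde M$ to $X^{\perf}$ has Newton slopes $\mu_j - \mu_1 \geq 0$ and hence admits a $\varphi^a$-stable lattice. Lemma~\ref{L:diagonal fully faithful} therefore applies: the inclusion $\tilde M_1 \hookrightarrow \tilde M$, regarded as a $\varphi^a$-invariant element of $\tilde M_1^{\dual} \otimes \tilde M$, descends to a $\Gamma(P,\calO)[p^{-1}]$-linear map $\bv\colon \calE_1 \to \calE$. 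By the purity of $\Gamma(P,\calO) \to W(S)$ (from the Remark after Lemma~\ref{L:completed perfect closure}), the commutation of $\bv$ with the remaining $F_i$ and with the connection, and the injectivity of $\bv$, all descend from $X^{\perf}$; so $\calE_1 \hookrightarrow \calE$ is an inclusion in $\PhiaIsoc(X)$. Applying the induction hypothesis to $\calE/\calE_1 \in \PhiaIsoc(X)$, whose pullback $\tilde M/\tilde M_1$ has $l-1$ constant slopes, completes the construction.

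Uniqueness of the filtration follows from uniqueness in Lemma~\ref{L:katz-filtration} after pullback, and this uniqueness also guarantees that the local filtrations glue over an affine cover to give the claimed global filtration. The hardest step is the descent in the third paragraph, which crucially exploits that $\calE_1^{\dual} \otimes \calE$ has nonnegative slopes so that Lemma~\ref{L:diagonal fully faithful} is applicable.
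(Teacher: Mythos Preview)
Your proof is correct and follows essentially the same approach as the paper: pull back to $X^{\perf}$, apply Katz's slope filtration (Lemma~\ref{L:katz-filtration}), twist to make the lowest step unit-root and descend it via Theorem~\ref{T:unit-root representation}, then descend the inclusion by applying Lemma~\ref{L:diagonal fully faithful} to $\calE_1^\dual \otimes \calE$. Your added remarks on purity and on gluing over an affine cover are useful clarifications that the paper leaves implicit.
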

\begin{proof}
We may assume at once that $X$ is affine.
We may pull back $\calE$ to obtain an object $\tilde{\calE} \in \FaIsoc(X^{\perf})$. Apply Lemma~\ref{L:katz-filtration}
and let $\tilde{\calE}_1$ be the first nonzero step of the resulting filtration of $\tilde{\calE}$; by the uniqueness of the filtration, $\tilde{\calE}_1$ is stable under the actions of the $\varphi_i^a$. 
It will
suffice to show that $\tilde{\calE}_1$ is itself the base extension of a subobject $\calE_1$ of $\calE$ in $\PhiaIsoc(X)$;
for this purpose, we may increase $a$ and then twist to reduce to the case
$\mu_1 = 0$. We may then apply Theorem~\ref{T:unit-root representation} to produce a diagonally unit-root object $\calE_1 \in \PhiaIsoc(X)$ which pulls back to $\tilde{\calE}_1$; it then remains to check that the inclusion $\tilde{\calE}_1  \to \tilde{\calE}$ descends to a morphism $\calE_1 \to \calE$. This follows by applying Lemma~\ref{L:diagonal fully faithful} to $\calE_1^\dual \otimes \calE$.
\end{proof}

\begin{remark}
In Lemma~\ref{L:katz-filtration}, using the fact that $S$ is perfect, one can similarly apply \cite[Proposition~5.4.3]{kedlaya-slope-filt} to show that the filtration splits uniquely; that is, the slope filtration refines to a \emph{slope decomposition}. However, this is not the case in Theorem~\ref{T:total slope filtration}.
\end{remark}

\section{Relative Dieudonn\'e--Manin}
\label{sec:relative DM}

In the case where one of the factors in the product is a geometric point, we obtain a relative version of the usual Dieudonn\'e--Manin decomposition theorem \cite[Corollary~14.6.4]{kedlaya-book}.

\begin{hypothesis}
Throughout \S\ref{sec:relative DM}, 
suppose that $k_n$ is algebraically closed and $X_n = \Spec k_n$,
and put $X' := X_1 \times_{\FF_p} \cdots \times_{\FF_p} X_{n-1}$.
Let $\pi_n\colon X \to X_n$ denote the projection map.
\end{hypothesis}

\begin{lemma} \label{L:cohomology pullback}
Assume that $X$ is affine.
For $\calE' \in \PhiaIsoc(X')$ and $i \geq 0$,
let $\calE$ be the pullback of $\calE'$ from $X'$ to $X$; then
\[
H^i_{\rig}(X, \calE)^{\varphi_n} \cong H^i_{\rig}(X', \calE'),
\qquad
 H^i_{\rig}(X, \calE)_{\varphi_n} = 0.
\]
\end{lemma}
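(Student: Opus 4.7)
The plan is to write the de Rham complex of $\calE$ as a completed tensor product of that of $\calE'$ with $K_n$, invoke the Artin–Schreier sequence for $K_n$, and exploit a trivial commutation of operators to get a cochain-level splitting that forces the long exact sequence in cohomology to degenerate.

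\textbf{Decomposition of the de Rham complex.} Working locally as in Definition~\ref{D:phi-isocrystal}, take $P = P' \times_{\ZZ_p} \Spf W(k_n)$. Because $X_n$ is a geometric point, $\Omega^1_{P_n/W(k_n)} = 0$, hence $\Omega^\bullet_P = \Omega^\bullet_{P'} \widehat{\otimes}_{\ZZ_p} W(k_n)$. Since $\calE$ is pulled back from $\calE'$, its connection is $\nabla_{P'} \otimes 1$ under this factorization, and the de Rham complex identifies after inverting $p$ with
\[
DR(\calE) \;=\; DR(\calE') \widehat{\otimes}_{\QQ_p} K_n,
\]
the differential being $d_{P'} \otimes 1$ and $\varphi_n$ acting as $1 \otimes \sigma$, where $\sigma$ is the canonical Witt Frobenius on $K_n$.

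\textbf{Artin–Schreier on $K_n$.} Because $k_n$ is algebraically closed, the sequence
\[
0 \to \QQ_p \to K_n \xrightarrow{\sigma - 1} K_n \to 0
\]
is exact: the kernel is $\QQ_p$ by uniqueness of Teichm\"uller lifts, and surjectivity holds already on $k_n$ (as $x^p - x = y$ is solvable by algebraic closedness) and lifts to $W(k_n)$ by successive approximation. Crucially, one produces in the process a \emph{bounded} $\QQ_p$-linear section $s\colon K_n \to K_n$ of $\sigma - 1$, with the estimate $v_p(s(y)) \geq v_p(y)$.

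\textbf{Cochain-level splitting.} Tensoring the Artin–Schreier sequence with $DR(\calE')$ over $\QQ_p$ yields a short exact sequence of cochain complexes
\[
0 \to DR(\calE') \to DR(\calE) \xrightarrow{\varphi_n - 1} DR(\calE) \to 0,
\]
and the map $1 \otimes s$ is a \emph{cochain-level} section, because $d_{P'} \otimes 1$ and $1 \otimes s$ act on disjoint tensor factors and therefore tautologically commute. Consequently the sequence splits as cochain complexes, the connecting homomorphisms in the associated long exact sequence vanish, and one obtains for every $i$ a short exact sequence
\[
0 \to H^i_\rig(X', \calE') \to H^i_\rig(X, \calE) \xrightarrow{\varphi_n - 1} H^i_\rig(X, \calE) \to 0,
\]
which delivers both halves of the lemma at once.

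\textbf{Main obstacle.} The principal technical point is ensuring that the completed tensor product $\widehat{\otimes}_{\QQ_p} K_n$ interacts correctly with taking de Rham cohomology and with passing through the Artin–Schreier sequence; this is precisely what boundedness of $s$ buys us, since a bounded right inverse to $\sigma - 1$ on $W(k_n)$ induces a bounded right inverse to $\varphi_n - 1$ on the $p$-adically completed tensor product, so exactness and the cochain splitting both survive completion.
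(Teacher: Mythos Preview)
Your argument is correct and follows essentially the same route as the paper: both establish that $\varphi_n - 1$ is termwise surjective on the de Rham complex with kernel $DR(\calE')$, then pass to cohomology. The paper packages the first step via a Schauder basis for $K_n$ over $\QQ_p$ (after reducing to $k_n$ at most countably generated) and the second via a comparison of the two spectral sequences for the double complex $\Omega^\bullet \otimes \calE \xrightarrow{\varphi_n-1} \Omega^\bullet \otimes \calE$; your bounded section of $\sigma-1$ accomplishes both at once and, as a small bonus, sidesteps the countability reduction.
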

\begin{proof}
We may assume that $k$ is at most countably generated over $\FF_p$.
With notation as in Definition~\ref{D:phi-isocrystal}, put $P' := P_1 \times_{\ZZ_p} \cdots \times_{\ZZ_p} P_{n-1}$.
Using a Schauder basis for $K_n$ over $\QQ_p$ \cite[Lemma~1.3.11]{kedlaya-book}, we see that for each $i$,
\[
\Gamma(P, \Omega^i \otimes \calE)^{\varphi_n} = \Gamma(P, \Omega^i \otimes \calE), \qquad \Gamma(P, \Omega^i \otimes \calE)_{\varphi_n} = 0.
\]
We may then deduce the claim by comparing the two spectral sequences that compute the total cohomology of
\[
\Omega^\bullet \otimes \calE \stackrel{\varphi_n-1}{\to} \Omega^\bullet \otimes \calE. \qedhere
\]
\end{proof}

\begin{theorem} \label{T:relative DM}
Every object $\calE \in \PhiaIsoc(X)$ decomposes uniquely as a direct sum
\[
\calE \cong \bigoplus_{d \in \QQ} \calE_d
\]
in which for $d = \frac{r}{s}$ in lowest terms,
$\calE_d$ is obtained by pulling back an object of $\PhiaIsoc(X')$ equipped with an
endomorphism $F_n$ such that $F_n^s = p^r$, which then gives the action of $\varphi_n$ on the pullback.
 (The latter may be recovered from $\calE_d$
as the kernel of $\varphi_n^{as} - p^r$.)
\end{theorem}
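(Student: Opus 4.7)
The plan is to adapt the classical Dieudonn\'e--Manin decomposition to this relative setting, exploiting the fact that $\varphi_n$ is $R'$-linear (though $K_n$-semilinear) because $\sigma_n$ acts trivially on $R' := \Gamma(P', \calO)[p^{-1}]$. First I would reduce to the affine case, adopting the notation of Definition~\ref{D:phi-isocrystal} so that $P = P' \times_{\ZZ_p} \Spf W(k_n)$ and $R := \Gamma(P, \calO)[p^{-1}] = R' \widehat{\otimes}_{\QQ_p} K_n$. For each $d = r/s$ in lowest terms with $s > 0$, I would set
\[
\calE_d := \ker\bigl(\varphi_n^{as} - p^r\colon \calE \to \calE\bigr),
\]
which is an $R'$-submodule, automatically stable under the connection and under $\varphi_1,\dots,\varphi_{n-1}$ since those commute with $\varphi_n$.

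The first task is to prove the direct sum decomposition $\calE = \bigoplus_d \calE_d$, with each $\calE_d$ a finite projective $R$-module. For this, I would pull back to the fiber over an arbitrary geometric point $\bar{x}' \to X'$: the pullback becomes an $F^a$-isocrystal with respect to $\varphi_n$ over (the $p$-adic completion of the Witt ring of) a product of algebraically closed fields, to which the classical Dieudonn\'e--Manin theorem \cite[Corollary~14.6.4]{kedlaya-book} applies on each connected component. Lemma~\ref{L:grothendieck-katz} applied to $\varphi_n$ after passing to $X^{\perf}$ bounds the set of $\varphi_n$-slopes that appear (denominators are bounded by $\rank \calE$) and stratifies $X$ accordingly, while Lemma~\ref{L:restrict to point} upgrades the pointwise decomposition to a global direct sum, whence each $\calE_d$ is a direct summand of $\calE$ and in particular finite projective.

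The second task is to show that each $\calE_d$ arises by pullback from an object $\calE'_d \in \PhiaIsoc(X')$ equipped with an endomorphism $F_n$ satisfying $F_n^s = p^r$. The identity $\varphi_n^{as} = p^r$ on $\calE_d$, together with the $K_n$-semilinearity of $\varphi_n$, endows $\calE_d$ with an action of the twisted $\QQ_{p^a}$-algebra $A_d := K_n\{T\}/(T\lambda - \sigma_n^a(\lambda)T,\ T^s - p^r)$, which represents the local Brauer class of invariant $d$ over $\QQ_{p^a}$ and is Morita equivalent to the central division algebra $D_d$ of that invariant. Applying this Morita equivalence in families over $R'$ should extract $\calE'_d$ as a finite projective $R' \otimes_{\QQ_{p^a}} D_d$-module; the connection and the remaining partial Frobenii transport automatically from their commutation with $\varphi_n$ on $\calE$, giving $\calE'_d$ the structure of an object of $\PhiaIsoc(X')$, and uniqueness of the decomposition follows from the kernel characterization.

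I expect the main obstacle to be the families version of the Dieudonn\'e--Manin equivalence required in the last step, since $R'$ is not in general Noetherian. A clean approach is Galois descent along the pro-cyclic extension $K_n/\QQ_{p^a}$ via a Hilbert~90 argument for the $\varphi_n$-twisted structure on $\calE_d$; alternatively, one can pass to $X^{\perf}$, invoke the structure theorem \cite[Theorem~2.7]{kedlaya-ainf} for $\varphi$-modules over $W(S)[p^{-1}]$ to produce the descent there, and then return to $X$ using Lemma~\ref{L:diagonal fully faithful}.
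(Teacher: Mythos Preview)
Your approach is quite different from the paper's, and the first task has a genuine gap. When you pull back to a geometric point $\bar{x}' \to X'$, the fiber is $\Spec(\kappa(\bar{x}') \otimes_{\FF_p} k_n)$ and the coefficient ring is $K(\bar{x}') \widehat{\otimes}_{\QQ_p} K_n$. By Corollary~\ref{C:geometrically integral product} this has $\pi_0$ a principal homogeneous space for $\widehat{\ZZ}$, and $\varphi_n = 1 \otimes \sigma_n$ acts on $\pi_0$ by translation: it \emph{permutes} the connected components rather than fixing any one of them. Hence there is no $\varphi_n$-module ``on each connected component'' to which classical Dieudonn\'e--Manin applies; the restriction of $\calE$ to a single component carries no $\varphi_n$-action at all. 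The lemmas you invoke do not repair this: Lemma~\ref{L:grothendieck-katz} is formulated for the absolute Witt-vector Frobenius on $W(S)$, not for a partial Frobenius such as $\varphi_n$, so it does not yield a ``$\varphi_n$-Newton stratification''; and Lemma~\ref{L:restrict to point} concerns extending sections across an open subset in the $X_n$-direction, which has no content here since $X_n = \Spec k_n$ is already a single point.

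The paper circumvents this by routing through the \emph{diagonal} Frobenius $\varphi = \varphi_1 \cdots \varphi_n$ rather than attacking $\varphi_n$ directly. One first reduces, via Theorem~\ref{T:total Newton stratification} and Lemma~\ref{L:fully faithful}, to the case of constant diagonal Newton polygon; Lemma~\ref{L:cohomology pullback} (vanishing of $\varphi_n$-coinvariants) then allows a reduction to $\calE$ irreducible, whence Theorem~\ref{T:total slope filtration} makes $\calE$ diagonally isoclinic. After twisting to diagonal slope $0$ and enlarging $k_1,\dots,k_{n-1}$, Theorem~\ref{T:unit-root representation} trades $\calE$ for a finite-dimensional $\QQ_p$-vector space $V$ on which the $\varphi_i^a$ act as commuting \emph{linear} automorphisms. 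Only now, on the honest finite-dimensional $K_n$-vector space $V \otimes_{\QQ_p} K_n$ with its $\sigma_n$-semilinear action, does the classical Dieudonn\'e--Manin theorem apply; the resulting decomposition of $V$ automatically carries the remaining $\Phi$-structure, so your Morita/Hilbert~90 manoeuvre is no longer needed.
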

\begin{proof}
We may assume that $X$ is affine.
Suppose first that the diagonal Newton polygon of $\calE$ is constant as a function on $|X'|$.
Using Lemma~\ref{L:cohomology pullback} to compare extensions, we may reduce to the case where $\calE$ is irreducible.
By Theorem~\ref{T:total slope filtration}, this means that the diagonal Newton slopes of $\calE$ are all equal to a single value.
By increasing $a$ and then twisting, we may reduce to the case where this single value is 0.
In this case, after enlarging $k_1,\dots,k_{n-1}$ to become algebraically closed,
Theorem~\ref{T:unit-root representation} gives rise to a finite-dimensional $\QQ_{p}$-vector space $V$
admitting an invertible linear action of $\varphi_n$. 
If we tensor this vector space over $\QQ_{p}$ with $K_n$, then we may apply the usual Dieudonn\'e--Manin
theorem \cite{manin} (see also \cite[Corollary~14.6.4]{kedlaya-book})
to decompose into eigenspaces as specified.

Now consider the general case. By Theorem~\ref{T:total Newton stratification}, there exists an open dense subspace
$U'$ of $X'$ of the form $U'_1 \times_{\FF_p} \cdots \times_{\FF_p} U'_{n-1}$ on which the diagonal Newton polygon is constant.
By Lemma~\ref{L:fully faithful}, we may extend projectors defining the direct sum decomposition from $U' \times_{\FF_p} k_n$
to $X' \times_{\FF_p} k_n = X$; we may thus reduce to the previous paragraph.
\end{proof}

\begin{cor} \label{C:relative DM}
Suppose that $k_1,\dots,k_n$ are algebraically closed and that $X_i = \Spec k_i$ for $i=1,\dots,n$. 
Then every object of $\PhiaIsoc(X)$ decomposes uniquely as a direct sum
\[
\bigoplus_{d_1,\dots,d_n \in \QQ} \calE_{d_1,\dots,d_n}
\]
in which for $d_1,\dots,d_n \in \QQ$ with least common denominator $s$, 
$\calE_{d_1,\dots,d_n}$ is obtained by pulling back a  finite-dimensional $\QQ_p$-vector space equipped
with commuting endomorphisms $F_1,\dots,F_n$ such that $F_i^s = p^{d_i s}$, which then give the actions
of $\varphi_1^a,\dots,\varphi_n^a$ on the pullback.  (This vector space may be recovered from $\calE_{d_1,\dots,d_n}$
as the joint kernel of $\varphi_i^{as} - p^{d_i s}$ for $i=1,\dots,n$.)
\end{cor}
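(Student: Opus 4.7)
The plan is to iterate Theorem~\ref{T:relative DM} once for each factor, inducting on $n$. The base case $n=1$ is the classical Dieudonn\'e--Manin decomposition (already invoked inside the proof of Theorem~\ref{T:relative DM}): for $k_1$ algebraically closed, every object of $\PhiaIsoc(\Spec k_1)$ decomposes into slope-$d_1$ pieces, each of the form $V \otimes_{\QQ_p} K_1$ with a semilinear $F_1$ satisfying $F_1^{s_1} = p^{r_1}$ on $V$, where $d_1 = r_1/s_1$ is in lowest terms.

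For the inductive step, I first apply Theorem~\ref{T:relative DM} with $X_n$ as the geometric-point factor. This yields
\[
\calE = \bigoplus_{d_n \in \QQ} \calE_{d_n},
\]
where, setting $X' := X_1 \times_{\FF_p} \cdots \times_{\FF_p} X_{n-1}$ and writing $d_n = r_n/s_n$ in lowest terms, each $\calE_{d_n}$ is the pullback of some $\calE'_{d_n} \in \PhiaIsoc(X')$ equipped with an endomorphism $F_n$ satisfying $F_n^{s_n} = p^{r_n}$. Since $X'$ is a product of $n-1$ spectra of algebraically closed fields, the inductive hypothesis applied to $\calE'_{d_n}$ produces a further decomposition into summands each pulled back from a finite-dimensional $\QQ_p$-vector space $V_{d_1,\dots,d_{n-1}}$ carrying commuting endomorphisms $F_1,\dots,F_{n-1}$ with $F_i^{s_i} = p^{r_i}$.

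The step requiring the most care is checking that the extra endomorphism $F_n$ produced by the first application respects the inductive decomposition of $\calE'_{d_n}$. This is in fact automatic: each summand of the inductive decomposition admits the intrinsic description as a joint kernel $\bigcap_{i < n} \ker(\varphi_i^{as'} - p^{d_i s'})$ for any $s'$ divisible by every $s_i$, while $F_n$, being an endomorphism in $\PhiaIsoc(X')$, commutes with every $\varphi_i^a$ for $i<n$ and therefore preserves each such joint kernel. Consequently $F_n$ restricts to an endomorphism of each $V_{d_1,\dots,d_{n-1}}$ still satisfying $F_n^{s_n} = p^{r_n}$. Finally, with $s := \mathrm{lcm}(s_1,\dots,s_n)$ one has $F_i^s = (F_i^{s_i})^{s/s_i} = p^{d_i s}$ for each $i$, and both the joint-kernel recovery $\calE_{d_1,\dots,d_n} = \bigcap_i \ker(\varphi_i^{as} - p^{d_i s})$ and the uniqueness of the decomposition follow by propagating the corresponding statements from Theorem~\ref{T:relative DM} through the induction.
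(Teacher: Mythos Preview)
Your argument is correct and is exactly the ``repeated application of Theorem~\ref{T:relative DM}'' that the paper's one-line proof alludes to; you have simply made explicit the inductive step and the reason $F_n$ preserves the summands coming from the induction hypothesis. No gap and no real difference in approach.
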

\begin{proof}
This follows by repeated application of Theorem~\ref{T:relative DM}.
\end{proof}

\section{Joint Newton slopes}
\label{sec:joint NP}

As a corollary of the relative Dieudonn\'e--Manin theorem, we may now define and study a ``Newton polygon'' that keeps track of the slopes for each of the partial Frobenius maps separately.
Throughout \S\ref{sec:joint NP}, fix $\calE \in \PhiaIsoc(X)$.

\begin{defn}
Define \emph{joint Newton slopes} of $\calE$ at a point $(x_1,\dots,x_n) \in |X_1| \times \cdots \times |X_n|$ as follows.
Choose geometric points $\overline{x}_i$ lying over $x_i$, put $\overline{X} := \overline{x}_1 \times_{\FF_p} \cdots \times_{\FF_p} \overline{x}_n$,
and apply Corollary~\ref{C:relative DM} to the pullback of $\calE$ to $\PhiaIsoc(\overline{X})$.
For $(d_1,\dots,d_n) \in \QQ^n$, we declare the \emph{multiplicity} of $(d_1,\dots,d_n)$ as a slope of $\calE$ to be
$\rank(\calE_{d_1,\dots,d_n})$. This does not depend on the choice of the $\overline{x}_i$.
\end{defn}

We have the following refinement of Theorem~\ref{T:total Newton stratification}.
\begin{theorem} \label{T:joint Newton stratification}
The joint Newton slope multiset of $\calE$, as a function on $|X|$,
is constant on each (connected) stratum of the stratification by the diagonal Newton polygon (Theorem~\ref{T:total Newton stratification}).
In particular, it also factors through $|X_1| \times \cdots \times |X_n|$.
\end{theorem}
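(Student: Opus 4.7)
The plan is to reduce, via the diagonal slope filtration and a twist, to the diagonally unit-root case, and then to read off the joint Newton slope multiset from a fixed representation-theoretic datum. Fix a point $p = (x_1,\dots,x_n)$ in a connected component of a diagonal Newton polygon stratum $S_\lambda$. I pull $\calE$ back to the perfect closure $X^{\perf}$ (retaining the commuting partial Frobenius structures) and restrict to a quasicompact affine open of the locally closed perfect subspace cut out by the condition that the Newton polygon equals $\lambda$ (available by Lemma~\ref{L:grothendieck-katz}); on this affine, the underlying $F^a$-isocrystal has constant Newton polygon. Lemma~\ref{L:katz-filtration} then produces a slope filtration, and by its uniqueness clause the filtration is stable under each partial Frobenius, so it is a filtration of $\Phi^a$-isocrystals.

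Because this filtration splits on the further pullback to $\overline{X} := \overline{x}_1 \times_{\FF_p} \cdots \times_{\FF_p} \overline{x}_n$ (by Corollary~\ref{C:relative DM}), the joint Newton slope multiset of $\calE$ at $p$ is the multiset union of the joint Newton slope multisets of the associated graded pieces. I may therefore replace $\calE$ by one graded piece and assume it is isoclinic with common diagonal slope $\mu$; passing to a multiple of $a$ via Remark~\ref{R:reduce to first power} and twisting by a suitable rank-one object, I further reduce to $\mu = 0$, i.e., $\calE$ is diagonally unit-root. At this stage I invoke the perfect-scheme variant of Theorem~\ref{T:unit-root representation}, which (after enlarging each $k_i$ to its algebraic closure, a harmless operation for the statement) describes $\calE$ on a connected component of the perfect stratum as a finite-dimensional $\QQ_{p^a}$-vector space $V$ equipped with a continuous $\pi_1$-action and with $n$ commuting, equivariant automorphisms $F_1,\dots,F_n$ whose composition is the identity. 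Pulling back to $\overline{X}$ trivializes the $\pi_1$-action, so Corollary~\ref{C:relative DM} applied to the pullback recovers precisely the joint generalized-eigenspace decomposition of $V$ under $F_1,\dots,F_n$. The multiplicities are therefore determined by the intrinsic datum $(V, F_1,\dots,F_n)$, which is independent of $p$; overlapping such local descriptions yields constancy on the whole connected component of $S_\lambda$, and constancy on $|X|$ follows since the construction factors through $|X_1|\times\cdots\times|X_n|$ by definition.

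The main obstacle is Step 1: Theorem~\ref{T:total slope filtration} as stated requires the diagonal Newton polygon to be constant on all of $X$, but a general stratum is only locally closed in $|X_1|\times\cdots\times|X_n|$ and need not inherit a smooth-scheme structure. I sidestep this by working entirely on the perfect closure, where $F^a$-isocrystals extend to arbitrary perfect subspaces and Lemma~\ref{L:katz-filtration} applies directly, with its uniqueness clause ensuring compatibility with the partial Frobenius actions. Adapting Theorem~\ref{T:unit-root representation} to this non-smooth perfect setting is routine: Lemma~\ref{L:spectrum valuation ring} already works on general perfect schemes, and the $\pi_0$-descent argument that relates the module description over $\Cts(\pi_0, \QQ_{p^a})$ to a constant $\QQ_{p^a}$-module relies on Lemma~\ref{L:isomorphic representations} and not on any smoothness hypothesis.
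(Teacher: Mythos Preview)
Your argument is correct and follows the same three-step strategy as the paper: reduce to constant diagonal Newton polygon, pass through the slope filtration and twist to the diagonally unit-root case, then read off the joint slopes from the representation-theoretic datum of Theorem~\ref{T:unit-root representation}. The paper's proof is terser---it invokes Theorems~\ref{T:total slope filtration} and~\ref{T:unit-root representation} directly rather than their perfect-scheme inputs---but your explicit passage to the perfection via Lemma~\ref{L:katz-filtration}, together with the observation that only the (b)$\Leftrightarrow$(d) portion of Theorem~\ref{T:unit-root representation} (which rests on Lemma~\ref{L:spectrum valuation ring} and the $\pi_0$-torsor description) is needed, is precisely how those results are proved and is the honest way to handle a stratum that need not be a product of smooth $k_i$-schemes.
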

\begin{proof}
We may first apply Theorem~\ref{T:total Newton stratification} to reduce to the case where the diagonal Newton polygon is constant
on $|X|$, then apply Theorem~\ref{T:total slope filtration} to further reduce to the case where $\calE$ is diagonally unit-root.
In this case, the claim follow at once from Theorem~\ref{T:unit-root representation}.
\end{proof}

\section{Partial slope filtrations}

Now that we have a meaningful notion of joint Newton slopes for a $\Phi$-isocrystal, we derive a form of the slope filtration theorem.

\begin{lemma} \label{L:split by slope}
Let $\calE_i, \calE'_i \in \FaIsoc(X_i)$ be objects which are isoclinic of respective slopes $\mu_i, \mu'_i$. Put
$\calE := \calE_1 \boxtimes \cdots \boxtimes \calE_n, \calE' := \calE'_1 \boxtimes \cdots \boxtimes \calE'_n$.
\begin{enumerate}
\item[(a)]
If $H^0(X, \calE^\dual \otimes \calE') \neq 0$, then
$\mu_i = \mu'_i$ for all $i$.
\item[(b)]
If $H^1(X, \calE^\dual \otimes \calE') \neq 0$, then $\mu_i \geq \mu'_i$ for all $i$.
\end{enumerate}
\end{lemma}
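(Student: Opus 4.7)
Set $\calF_i := \calE_i^\dual \otimes \calE'_i$ (each isoclinic of slope $\nu_i := \mu'_i - \mu_i$) and $\calG := \calF_1 \boxtimes \cdots \boxtimes \calF_n$, so the claim becomes: $H^0(X, \calG) \neq 0$ forces $\nu_i = 0$ for all $i$, while $\nu_i > 0$ for some $i$ implies $H^1(X, \calG) = 0$. For part (a), I would interpret a nonzero element of $H^0(X, \calG) = \Hom_{\PhiaIsoc(X)}(\calE, \calE')$ as a morphism $f \colon \calE \to \calE'$ nonzero in some stalk $x$, take $\overline{\kappa(x)}$ to produce a geometric point $\overline{x} \to X$ with $f|_{\overline{x}} \neq 0$, and then restrict to the product of induced geometric points $\overline{X} := \overline{x}_1 \times_{\FF_p} \cdots \times_{\FF_p} \overline{x}_n$, through which $\overline{x}$ factors. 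By Corollary~\ref{C:relative DM}, $\calE|_{\overline{X}}$ and $\calE'|_{\overline{X}}$ decompose uniquely into joint slope components, and the external product structure forces the only joint slopes appearing to be $(\mu_1, \dots, \mu_n)$ and $(\mu'_1, \dots, \mu'_n)$ respectively. Since a $\Phi$-equivariant morphism must vanish between distinct joint slope components, nonvanishing of $f|_{\overline{X}}$ forces $\mu_i = \mu'_i$ for every $i$.

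For part (b), I would prove the contrapositive: assuming without loss of generality $\nu_1 > 0$, every extension $0 \to \calG \to \calE \to \calO \to 0$ in $\PhiaIsoc(X)$ splits. Working in a local chart as in Definition~\ref{D:phi-isocrystal}, choose any module-level lift $\tilde{1} \in \calE$ of $1 \in \calO$; the obstructions to $\tilde{1}$ defining a $\PhiaIsoc$-section are
\[
\omega := \nabla \tilde{1} \in \calG \otimes_{\Gamma(P,\calO)} \Omega^1_P, \qquad u_i := (\varphi_i^a - 1)\tilde{1} \in \calG.
\]
The crucial observation is that since $\calF_1$ is isoclinic of slope $\nu_1 > 0$, the operator $\varphi_1^a$ is topologically nilpotent on $\calG$ (and on $\calG \otimes \Omega^1_P$): after enlarging $a$ to clear the denominator of $\nu_1$, a $\varphi_1^a$-stable lattice in $\calF_1$ is contracted by a positive power of $p$. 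Hence $1 - \varphi_1^a$ is invertible on these modules via the convergent series $\sum_{k \geq 0}(\varphi_1^a)^k$. Set $g := (1 - \varphi_1^a)^{-1} u_1$. Then $(1 - \varphi_1^a) g = u_1$ by construction; the commutativity $\varphi_i^a \varphi_1^a = \varphi_1^a \varphi_i^a$ applied to $\tilde{1}$ yields the cocycle identity $(1 - \varphi_i^a) u_1 = (1 - \varphi_1^a) u_i$, which together with the commutation of $(1-\varphi_1^a)^{-1}$ with $\varphi_i^a$ gives $(1 - \varphi_i^a) g = u_i$ for every $i$; and horizontality of the Frobenius isomorphism $\varphi_1^a$ yields $\nabla u_1 = -(1 - \varphi_1^a)\omega$, whence $\nabla g = -\omega$. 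Therefore $\tilde{1} + g$ is a horizontal $\Phi$-invariant lift of $1$, providing the desired splitting. Uniqueness of $g$ (any two module lifts $\tilde{1}' = \tilde{1} + h$ satisfy $g' - g = -h$) ensures the local splittings agree on overlaps, producing a global splitting.

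The main obstacle will be rigorously justifying the topological nilpotence of $\varphi_1^a$ on the external product $\calG$ in the convergent setting, which depends on producing a Frobenius-stable lattice in $\calF_1$ with a positive $p$-adic contraction estimate; this should follow from the isoclinicity hypothesis via standard manipulations (enlarging $a$, twisting so $\nu_1$ becomes an integer), but warrants care. Once this is secured, the cocycle and horizontality identities are formal consequences of the $\PhiaIsoc$-structure, and the splitting argument carries through.
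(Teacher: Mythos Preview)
Your argument is correct, and for part (b) it takes a genuinely different route from the paper.

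For part (a), both you and the paper restrict to geometric points and invoke a Dieudonn\'e--Manin-type statement; you pull back to a full product of geometric points and use Corollary~\ref{C:relative DM}, while the paper pulls back only the first $n-1$ factors and reduces to the classical $n=1$ case. These are essentially equivalent.

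For part (b), the paper argues pointwise-plus-descent: it pulls back the extension along each geometric point $\overline{x}' \to X_1 \times_{\FF_p} \cdots \times_{\FF_p} X_{n-1}$, obtains a \emph{unique} $\varphi_n$-splitting there (from the positive slope), spreads this over nilpotent thickenings, and then descends using faithful flatness of the map from $\Gamma(P,\calO)$ to the product of its completions at maximal ideals; the uniqueness forces compatibility with the remaining $\varphi_i^a$ and with the connection. Your argument works directly over $\Gamma(P,\calO)$: you produce a global $\varphi_1^a$-splitting via the convergent series $(1-\varphi_1^a)^{-1}$, then verify compatibility with $\nabla$ and the other $\varphi_i^a$ by the cocycle identities. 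Your approach is more explicit and avoids the fpqc descent step entirely, at the cost of needing a global Frobenius-contracting lattice in $\calG$; as you note, this lattice is obtained (after enlarging $a$ so that $\nu_1 \in \ZZ_{>0}$) by twisting a unit-root object and invoking the Katz--Crew description of unit-root $F$-isocrystals via $\ZZ_p$-local systems, which produces a $\varphi^a$-stable integral lattice in $\calF_1$ that tensors up to a lattice in $\calG$ on which $\varphi_1^a$ is divisible by $p^{\nu_1}$. The paper's route sidesteps this lattice construction but trades it for the descent machinery; either way works.
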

\begin{proof}
In both cases, we may reduce to the case where $\calE_i$ is the trivial object and $\mu_i = 0$ for all $i$; by permuting indices, we may reduce to proving the claimed 
assertion for $i=n$. 

To check (a), we may pull back from $X_1 \times_{\FF_p} \cdots \times_{\FF_p} X_{n-1}$ to a suitable geometric point to reduce to the case $n=1$.

To check (b), suppose first that $X = \Spec R$ is affine.
Suppose that $0 < \mu'_n$. Given an extension of $\calE'$,
after pulling back from $X_1 \times_{\FF_p} \cdots \times_{\FF_p} X_{n-1}$ to any geometric point we obtain a \emph{unique} splitting for the action of $\varphi_n$.
In particular, this splitting spreads out uniquely over nilpotent thickenings.
Since the homomorphism from $R$ to the product of its completions at maximal ideals is faithfully flat (by applying \stacktag{00MB} and \stacktag{05CZ}
as in \cite[Remark~6.4]{grubb-kedlaya-upton}) and the uniqueness gives us descent data for the resulting fpqc covering, we obtain a unique splitting of the original extension for the action of $\varphi_n$. The uniqueness implies that the splitting is also preserved by the other actions, yielding $H^1(X, \calE') = 0$.
Finally, the uniqueness of the splitting allows us to extend the argument to the case where $X$ is not necessarily affine.
\end{proof}

\begin{lemma} \label{L:diagonally unit-root decomposition}
For $d \in \ZZ$, let $\calO(d) \in \FaIsoc(X_i)$ be the object of rank $1$ with a generator $\bv$ satisfying $\varphi^a(\bv) = p^{d} \bv$.
Suppose that $\calE \in \PhiaIsoc(X)$ is diagonally unit-root.
Then after replacing $a$ by some suitable multiple, $\calE$ decomposes as a direct sum
\[
\bigoplus_{d_1,\dots,d_n} \calE_{d_1,\dots,d_n} 
\]
indexed by tuples $(d_1,\dots,d_n) \in \ZZ^n$ with $d_1 + \cdots + d_n = 0$, in which
$\calE_{d_1,\dots,d_n}$ is a successive extension of objects each of the form
\[
(\calF_1 \otimes \calO(d_1)) \boxtimes \cdots \boxtimes (\calF_n \otimes \calO(d_n))
\]
for some unit-root objects $\calF_i \in \FaIsoc(X_i)$.
\end{lemma}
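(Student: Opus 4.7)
The plan is to combine the relative Dieudonn\'e--Manin decomposition (Corollary~\ref{C:relative DM}) at a fiber with Drinfeld's lemma (Theorem~\ref{T:Drinfeld}) via the unit-root representation equivalence of Theorem~\ref{T:unit-root representation}.

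First, I would decompose $\calE$ by joint Newton slopes. Pull $\calE$ back to a product of geometric points $\overline{X} = \overline{x}_1 \times_{\FF_p} \cdots \times_{\FF_p} \overline{x}_n$; Corollary~\ref{C:relative DM} exhibits the joint Newton slopes of the partial Frobenius actions $F_1, \ldots, F_n$, with denominators bounded by $\rank(\calE)!$. After replacing $a$ by a large multiple, all joint slopes become integers. Theorem~\ref{T:joint Newton stratification} then guarantees that this slope multiset is independent of the chosen fiber. Enlarging each $k_i$ to its algebraic closure and restricting to a connected component of $X$, Theorem~\ref{T:unit-root representation}(d) translates $\calE$ into a $\pi_1$-representation equipped with commuting equivariant automorphisms $F_i$; the joint generalized eigenspace decomposition is then $\pi_1$-stable, yielding a direct sum $\calE = \bigoplus_{(d_1,\ldots,d_n)} \calE_{d_1,\ldots,d_n}$ in $\PhiaIsoc(X)$ after descent along the base change (which is valid by the canonicity of the decomposition). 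The identity $F_1 \cdots F_n = \mathrm{id}$ forces $d_1 + \cdots + d_n = 0$.

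Second, on each summand I would twist by $\calO(-d_1) \boxtimes \cdots \boxtimes \calO(-d_n)$ to produce an object $\calG$ that is diagonally unit-root and in which each individual partial Frobenius is itself unit-root. Under Theorem~\ref{T:unit-root representation}(c), $\calG$ corresponds to a $\QQ_{p^a}$-local system on $X$ whose commuting partial Frobenius actions compose to the canonical total Frobenius; equivalently, $\calG$ descends to a representation of $\pi_1(X_0/\Phi, \overline{x})$. By Theorem~\ref{T:Drinfeld} this group is a fibered product of the $\pi_1(X_i/\varphi_i, \overline{x})$ over $\pi_1(\FF_p/\varphi, \overline{x})$, and iterating Lemma~\ref{L:external product decomposition} along a Jordan--H\"older filtration produces a composition series for $\calG$ whose successive quotients are external products of unit-root representations of the individual $\pi_1(X_i/\varphi_i)$. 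By Lemma~\ref{L:spectrum valuation ring}, each such factor corresponds to a unit-root $\calF_i \in \FaIsoc(X_i)$; untwisting by $\calO(d_1) \boxtimes \cdots \boxtimes \calO(d_n)$ recovers the claimed successive extensions.

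The main obstacle is the passage from the plain-product form required by Lemma~\ref{L:external product decomposition} to the fibered product furnished by Theorem~\ref{T:Drinfeld}. I expect to handle this by enlarging $a$ once more so that $\pi_1(\FF_p/\varphi, \overline{x}) \cong \widehat{\ZZ}$ acts trivially on the finite set of irreducible constituents of $\calG$, so that the fibered product effectively becomes a plain product at the level of representations; Lemma~\ref{L:isomorphic representations} provides exactly the technical mechanism for such a reduction, converting a $\widehat{\ZZ}$-parametrized family of representations into a locally constant and ultimately constant one. Secondary technicalities include verifying descent of the decomposition from $\overline{k}_i$ back to $k_i$, and lifting the representation-theoretic Jordan--H\"older filtration to a $\Phi$-equivariant filtration in $\PhiaIsoc(X)$; both should reduce to canonicity statements following from Theorem~\ref{T:unit-root representation}.
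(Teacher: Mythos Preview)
Your overall architecture matches the paper's: invoke Theorem~\ref{T:unit-root representation} to convert $\calE$ into a representation $V$ of the fundamental group equipped with commuting equivariant automorphisms $F_1,\dots,F_n$, decompose $V$ by joint (generalized) eigenspaces of the $F_i$, twist, and then exhibit each piece as a successive extension of external products via an inductive argument of the type in Lemma~\ref{L:external product decomposition}.

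The one substantive difference is the strength of the twist. You arrange only that each $F_i$ become \emph{unit-root}; the paper extracts more from Corollary~\ref{C:relative DM}. At a geometric fiber the $F_i$ satisfy $F_i^s = p^{d_i s}$ exactly, so after replacing $a$ by $as$ the eigenvalues of each $F_i$ on $V$ are literal integer powers of $p$, and twisting by $\calO(-d_i)$ makes each $F_i$ act as the \emph{identity} on $V$. This is precisely what dissolves your fibered-product obstacle: once the $F_i$ are trivial, the residual $\widehat{\ZZ}$-direction in $\pi_1(X/\Phi)$ acts trivially on the representation, and one is effectively working with a representation of $\prod_i \pi_1(X_i)$. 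The paper then runs the induction directly (choose an absolutely irreducible $\pi_1(X_n)$-constituent $W \subset V$, form $\Hom_{\pi_1(X_n)}(W,V)$ as a representation on the remaining factors, and recurse), which is Lemma~\ref{L:external product decomposition} unrolled. Your proposed detour through Lemma~\ref{L:isomorphic representations} is not the right mechanism here---that lemma is already absorbed into the proof of Theorem~\ref{T:unit-root representation}---whereas the sharper twist resolves the issue at the source.
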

\begin{proof}
Choose a geometric point $\overline{x}$ of $X$.
By Theorem~\ref{T:unit-root representation}, $\calE$ corresponds to a continuous representation of $G := \pi_1(X_1, \overline{x}) \times \cdots \times \pi_1(X_n, \overline{x})$
on a finite-dimensional $\QQ_{p^a}$-vector space $V$ equipped with $G$-equivariant, commuting, invertible actions of $\varphi_1^a, \dots, \varphi_n^a$
which compose to the identity. 
After replacing $a$ by a suitable multiple and enlarging $k_1,\dots,k_n$ to be algebraically closed, we may apply Corollary~\ref{C:relative DM} at $\overline{x}$ to deduce that for each $i$, the eigenvalues of $\varphi_i^a$ on $V$
belong to $\QQ_{p^a}$. By separating $V$ into its joint eigenspaces and then twisting, we may reduce to the case where $\varphi_1^a, \dots, \varphi_n^a$ all fix $V$.

In this case, we prove that $\calE \cong \calF_1 \boxtimes \cdots \boxtimes \calF_n$ by induction on $n$, as follows.
We may of course assume $V \neq 0$.
By further enlarging $a$, we may reduce to the case where the irreducible constitutents of $V$ as a representation of $\pi_1(X_n, \overline{x})$
are absolutely irreducible; let $W$ be such a constituent which occurs as a subobject of $V$.
By Theorem~\ref{T:unit-root representation}, $W$ corresponds to an irreducible unit-root object $\calF_n \in \FaIsoc(X_n)$,
while $\Hom_{\pi_1(X_n, \overline{x})}(W, V)$ with its action of $\pi_1(X_1, \overline{x}) \times \cdots \times \pi_1(X_{n-1}, \overline{x})$
corresponds to a diagonally unit-root object $\calE' \in \PhiaIsoc(X_1 \times_{\FF_p} \cdots \times_{\FF_p} X_{n-1})$
such that $\calE' \boxtimes \calF_n$ maps injectively to $\calE$. Applying the induction hypothesis to $\calE'$ proves the claim.
\end{proof}

\begin{cor} \label{C:partial slope filtration}
Suppose that $\calE \in \PhiaIsoc(X)$ has constant diagonal Newton polygon. Then for $i=1,\dots,n$, $\calE$ admits a filtration
\[
0 = \calE_0 \subset \cdots \subset \calE_l = \calE
\]
in $\PhiaIsoc(X)$ and an ascending sequence $\mu_1 < \cdots < \mu_l$ of rational numbers such that for $j=1,\dots,l$,
every joint Newton slope of $\calE_j/\calE_{j-1}$ has $i$-th component equal to $\mu_j$. Moreover, the filtration and sequence are both uniquely determined by this requirement. (We call this the \emph{$i$-th partial slope filtration} of $\calE$.)
\end{cor}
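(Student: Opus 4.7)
By symmetry I take $i = n$; uniqueness will follow immediately from the canonical characterization below, so I focus on existence. The first step is to apply Theorem~\ref{T:total slope filtration} to produce the diagonal slope filtration $0 = \calE_0 \subset \cdots \subset \calE_l = \calE$, with graded pieces $\calE_{(k)} := \calE_k/\calE_{k-1}$ diagonally isoclinic of increasing slopes $\nu_1 < \cdots < \nu_l$. Within each $\calE_{(k)}$, after replacing $a$ by a suitable multiple and twisting coordinate-wise to reduce to the diagonally unit-root case, Lemma~\ref{L:diagonally unit-root decomposition} yields a decomposition $\calE_{(k)} = \bigoplus_{(d_1,\dots,d_n):\,\sum_j d_j = \nu_k} \calE_{(k),(d_1,\dots,d_n)}$ in which each summand is a successive extension of external products of isoclinic $F^a$-isocrystals with joint Newton slope exactly $(d_1,\dots,d_n)$. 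Regrouping by the $n$-th coordinate gives $\calE_{(k)} = \bigoplus_{\mu} \calE_{(k),\mu}$, a clean decomposition of each diagonal graded piece by $n$-th joint slope.

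The heart of the argument is then to glue these per-graded-piece decompositions into a filtration of $\calE$ itself. For each cutoff $\mu$, I build $\calF_k^{\leq \mu} \subset \calE_k$ inductively in $k$, requiring that it contain $\calF_{k-1}^{\leq \mu}$ and project onto $\bigoplus_{\mu' \leq \mu} \calE_{(k),\mu'}$ inside $\calE_{(k)}$. The obstruction at each step is an extension class in $\Ext^1\bigl(\bigoplus_{\mu' \leq \mu} \calE_{(k),\mu'},\, \calE_{k-1}/\calF_{k-1}^{\leq \mu}\bigr)$. Resolving both arguments via the external-product building blocks supplied by Lemma~\ref{L:diagonally unit-root decomposition} reduces this to $\Ext^1$ groups between external products of isoclinics whose $n$-th joint slopes satisfy $d_n \leq \mu < d'_n$. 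Lemma~\ref{L:split by slope}(b) then forces each such $\Ext^1$ to vanish, since the required componentwise inequality $d_j \geq d'_j$ fails in the $n$-th coordinate. Hence the inductive lift is unobstructed, and letting $\mu$ range over the finitely many $n$-th joint slope values produces the filtration $\calF_1 \subset \cdots \subset \calF_l = \calE$.

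The construction takes place after enlarging $a$, but by Theorem~\ref{T:joint Newton stratification} each $\calF_k^{\leq \mu}$ is characterized pointwise by the $n$-th joint slopes; it is therefore canonical and stable under the finer partial Frobenius structure of the original $\PhiaIsoc(X)$, so it descends. The same canonical characterization yields uniqueness. I expect the main obstacle to be precisely this vanishing of the gluing $\Ext^1$ groups across different diagonal-slope strata; the key point is that the diagonal slope filtration places lower diagonal slopes at the bottom, so that for any hypothetical extension with $d_n \leq \mu < d'_n$ the inequality in Lemma~\ref{L:split by slope}(b) fails in exactly the direction needed to make the lift unobstructed.
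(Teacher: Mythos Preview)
Your approach is essentially the same as the paper's: apply Theorem~\ref{T:total slope filtration}, decompose each diagonally isoclinic graded piece via Lemma~\ref{L:diagonally unit-root decomposition} (after enlarging $a$), and then use the $\Ext^1$-vanishing of Lemma~\ref{L:split by slope}(b) to rearrange by the $i$-th slope component. The paper phrases the last step as ``reorder the successive quotients'' while you phrase it as an inductive lifting with vanishing obstruction, but these are two descriptions of the same splitting argument; the descent back to the original $a$ via uniqueness is likewise implicit in the paper's ``we are free to enlarge $a$.''
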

\begin{proof}
By Theorem~\ref{T:joint Newton stratification}, the joint Newton polygon of $\calE$ is also constant.
By Remark~\ref{R:reduce to first power}, we are free to enlarge $a$ to ensure that the joint Newton slopes are integers.
We may then apply Lemma~\ref{L:diagonally unit-root decomposition} to the successive quotients of the diagonal slope filtration;
this yields a filtration in which each successive quotient has the property that every joint Newton slope has a fixed value in the $i$-th component,
but these values may not occur in ascending order. However, using Lemma~\ref{L:split by slope} we can reorder the successive quotients to enforce this condition.
\end{proof}

\section{Slices}
\label{sec:slices}

We finally arrive at our main structure theorem for convergent $\Phi$-isocrystals,
modeled on Corollary~\ref{C:external product decomposition lisse}.

\begin{hypothesis}
For the remainder of \S\ref{sec:slices},
put $X' := X_1 \times_{\FF_p} \cdots \times_{\FF_p} X_{n-1}$
and choose  $\calE \in \PhiaIsoc(X)$.
Apply Theorem~\ref{T:total Newton stratification} to choose
open dense subschemes $U_i$ of $X_i$ such that the diagonal Newton polygon is constant on
$U := U_1 \times_{\FF_p} \cdots \times_{\FF_p} U_n$.
\end{hypothesis}

\begin{lemma} \label{L:pullback subquotient}
Let $\pi_n\colon X \to X_n$ be the projection map.
Then the essential image of the pullback functor $\pi_n^*$ from $\FaIsoc(X_n)$ to $\PhiaIsoc(X)$
 is closed under formation of quotients (and hence subquotients).
\end{lemma}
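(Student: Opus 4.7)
The plan is to argue locally via the explicit description of $\pi_n^* \calF$. Assume each $X_i$ is affine with smooth affine formal lift $P_i$ over $W(k_i)$ and Frobenius lift $\sigma_i$, and put $P' := P_1 \times_{\ZZ_p} \cdots \times_{\ZZ_p} P_{n-1}$. For $\calF \in \FaIsoc(X_n)$, the pullback $\calE := \pi_n^* \calF$ has underlying $\Gamma(P, \calO)[p^{-1}]$-module $\calF \otimes_{\Gamma(P_n, \calO)} \Gamma(P, \calO)$; the structures are arranged so that $F_i$ (for $i < n$) acts as $\sigma_i^a$ on the second tensor factor and trivially on $\calF$, while the connection in the $P'$-direction is the canonical derivation on $\Gamma(P, \calO)$. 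The key observation is that $\calF$ embeds into $\calE$ (via $f \mapsto f \otimes 1$) precisely as the sub-$\Gamma(P_n, \calO)[p^{-1}]$-module of sections horizontal in the $P'$-direction: writing $\bv = \sum_i e_i \otimes r_i$ in a local basis $\{e_i\}$ of $\calF$, the identity $\nabla_{P'}(\bv) = 0$ forces each $r_i$ to lie in $\Gamma(P_n, \calO)[p^{-1}]$.

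Given a surjection $q\colon \calE \to \calG$ in $\PhiaIsoc(X)$ with kernel $\calH$, the aim is to establish the descent identity $\calH = \pi_n^*(\calH \cap \calF)$ as submodules of $\calE$. Once established, $\calH \cap \calF$ is stable under $F_n$ and $\nabla_n$ (each preserves both $\calH$ and $\calF$ individually), and combining the finite projectivity of $\calH$ with the faithful flatness of $\Gamma(P, \calO)[p^{-1}]$ over $\Gamma(P_n, \calO)[p^{-1}]$ shows that $\calH \cap \calF$ is finite projective over $\Gamma(P_n, \calO)[p^{-1}]$ of rank matching that of $\calH$. Then $\calF/(\calH \cap \calF)$ is a well-defined object of $\FaIsoc(X_n)$ and $\calG \cong \pi_n^*(\calF/(\calH \cap \calF))$.

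The heart of the proof is the descent identity, which reduces to showing that $\calH$ is generated as a $\Gamma(P, \calO)[p^{-1}]$-module by its intersection with $\calF$. The structural inputs are that $\calH$ is stable under $\nabla_{P'}$ and under $F_1, \dots, F_{n-1}$. Working in local coordinates $\{t_j\}$ on $P'$ (obtained after \'etale localization on each $X_i$ to reduce to the polydisk case with standard Frobenius lift $\sigma_i(t_j) = t_j^p$), I would expand a given $\bv \in \calH$ as a Taylor series $\bv = \sum_\alpha \bv_\alpha t^\alpha$ with coefficients $\bv_\alpha \in \calF$, and use iterated application of $\nabla_{P'}$ (which preserves $\calH$ by horizontal stability) together with the Frobenius stabilities (which constrain how the coordinates $t^\alpha$ transform under each $\sigma_i^a$) to force each Taylor coefficient, or a suitable $\Gamma(P, \calO)[p^{-1}]$-linear combination thereof, to lie in $\calH \cap \calF$. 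This is a form of Frobenius-linear horizontal descent analogous to the classical descent theorems underlying the definition of $\FIsoc$ via formal lifts.

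The main obstacle is executing this descent rigorously: one must ensure $p$-adic convergence of the extracted series and verify that the individual coefficients genuinely belong to $\calH \cap \calF$, not merely to $\calF$. My expectation is to first settle the polydisk case explicitly, then globalize using the stack property of $\PhiaIsoc$ on each $X_i$. Finally, closure of the essential image under subobjects, which combines with closure under quotients to give the stated closure under subquotients, follows from the quotient case by dualizing, since $\PhiaIsoc(X)$ is a rigid tensor category and $\pi_n^*$ commutes with duals.
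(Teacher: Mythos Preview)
Your approach diverges from the paper's and carries a concrete error. The ``key observation'' in your first paragraph is not correct as stated: the connection in Definition~\ref{D:phi-isocrystal} is $K$-linear with $K = K_1 \widehat{\otimes}_{\QQ_p} \cdots \widehat{\otimes}_{\QQ_p} K_n$, so the $P'$-horizontal sections of $\calE = \pi_n^*\calF$ are $\calF \otimes_{K_n} K$, not $\calF$ itself (unless $k_i = \FF_p$ for every $i<n$). The same slip recurs in your Taylor expansion, where the coefficients $\bv_\alpha$ naturally lie in $\calF \otimes_{K_n} K$ rather than in $\calF$. To isolate $\calF$ one must additionally impose invariance under $F_1,\dots,F_{n-1}$, which act through the Witt-vector Frobenii on $K_1,\dots,K_{n-1}$; so even after correcting the characterization, your descent would require a separate Frobenius-descent step from $K$ down to $K_n$. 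More seriously, the heart of the argument---forcing the individual Taylor coefficients into $\calH \cap \calF$---remains only a sketch. Iterating $\nabla_{P'}$ and the $F_i$ produces further elements of $\calH$, but these are still power series; there is no evaluation-at-zero map that respects membership in $\calH$, and you offer no concrete mechanism to bridge this. You flag this as ``the main obstacle,'' but without resolving it the proposal does not amount to a proof.

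The paper proceeds quite differently and avoids direct module-theoretic descent on $\calH$. After enlarging the base fields so that $X'$ acquires a rational point $x$, one restricts the surjection $\pi_n^*\calF \to \calG$ along $\{x\} \times_{\FF_p} X_n \cong X_n$ to obtain a candidate object of $\FaIsoc(X_n)$. Verifying that this candidate pulls back to $\calG$ is then reduced, via Lemma~\ref{L:fully faithful} and Theorem~\ref{T:total Newton stratification}, to the case of constant diagonal Newton polygon, where it follows from the representation-theoretic equivalence of Theorem~\ref{T:unit-root representation}.
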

\begin{proof}
Choose $\calE \in \FaIsoc(X_n)$ and let $\pi_n^* \calE \to \calF$ be a surjection in $\PhiaIsoc(X)$.
To prove the claim, we may do so after enlarging $k_1,\dots,k_n$; we may thus assume that there exists
a $k_n$-rational point $x \in X'$. In this case, we obtain a candidate for an object which pulls back to $\calF$ by pulling the surjection
$\pi_n^* \calE \to \calF$ back to $x \times_k X_n \cong X_n$. To check that this candidate does indeed have pullback isomorphic to $\calF$,
by Lemma~\ref{L:fully faithful} we may reduce to the case where $U_i = X_i$ for $i=1,\dots,n$.
Using Theorem~\ref{T:total Newton stratification}, we may further reduce to the case where the diagonal Newton polygon is constant.
We may then deduce the claim from Theorem~\ref{T:unit-root representation}.
\end{proof}

\begin{lemma} \label{L:pushforward}
Let $\pi_n\colon X \to X_n$ be the projection map.
\begin{enumerate}
\item[(a)]
The pullback functor $\pi_n^*$ from $\FaIsoc(X_n)$ to $\PhiaIsoc(X)$ admits a right adjoint $\pi_{n*}$.
\item[(b)]
The formation of $\pi_{n*}(\calE)$ commutes with base change from $U_n$ to any other scheme.
\end{enumerate}
\end{lemma}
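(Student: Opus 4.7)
For part (a), the key input is Lemma~\ref{L:pullback subquotient}: since $\pi_n^*$ is exact, closure of the essential image under quotients yields closure under subquotients, and this image is trivially closed under finite direct sums. Consequently the essential image is closed under finite sums of subobjects sitting inside a fixed ambient object of $\PhiaIsoc(X)$. Given $\calE$, I will produce a maximal such subobject $\calE^{\max} \subseteq \calE$ as follows: any ascending chain of subobjects in the essential image has rank bounded by $\rank \calE$, so the rank eventually stabilizes; once stable, any further $\calE' \subseteq \calE$ in the essential image gives $\calE^{\max} + \calE'$ of the same rank, so $(\calE^{\max} + \calE')/\calE^{\max}$ is a subquotient in the essential image of rank zero, hence is zero. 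The functor $\pi_n^*$ is fully faithful: working locally, a $\Phi^a$-equivariant horizontal morphism between two pullback objects is forced by the triviality of $F_1, \dots, F_{n-1}$ and of the $X_i$-direction connections for $i < n$ to descend to a morphism of the underlying modules over $X_n$, using that the invariants in $\Gamma(P, \calO)$ under $\sigma_1^a, \dots, \sigma_{n-1}^a$ and under the corresponding connection directions are exactly $\Gamma(P_n, \calO)$. Setting $\pi_{n*}\calE$ to be the unique object of $\FaIsoc(X_n)$ with $\pi_n^*(\pi_{n*}\calE) = \calE^{\max}$, the adjunction follows because any map $\pi_n^*\calF \to \calE$ has image in the essential image of $\pi_n^*$ (by Lemma~\ref{L:pullback subquotient}), hence factors through $\calE^{\max}$, and full faithfulness then upgrades this factorization to a map $\calF \to \pi_{n*}\calE$.

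For part (b), given a morphism $f\colon Y_n \to U_n$ of admissible schemes, pulling $\calE$ back along $X' \times_{\FF_p} Y_n \to X$ yields $\calE_Y \in \PhiaIsoc(X' \times_{\FF_p} Y_n)$; the restriction $(\pi_{n*}\calE)|_{Y_n}$ corresponds to a subobject of $\calE_Y$ in the essential image of the analogous pullback functor, giving a natural injection $(\pi_{n*}\calE)|_{Y_n} \hookrightarrow \pi_{Y,*}\calE_Y$. To upgrade this to an isomorphism, I reduce via Lemma~\ref{L:fully faithful} to working on $U$ (on which the diagonal Newton polygon is constant by hypothesis), then apply Theorem~\ref{T:total slope filtration} to reduce to the diagonally isoclinic case, and finally enlarge $a$ and twist (per Remark~\ref{R:reduce to first power}) to reduce to the diagonally unit-root case.

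In the diagonally unit-root case, Theorem~\ref{T:unit-root representation} identifies $\calE|_U$ with a continuous representation of a product of fundamental groups equipped with commuting partial Frobenius structures, and a direct check shows that the pushforward of part (a) corresponds to taking invariants under $\pi_1(U_1) \times \cdots \times \pi_1(U_{n-1})$ together with the actions of $\varphi_1^a, \dots, \varphi_{n-1}^a$. Since such an invariants functor commutes with change of the coefficient ring that governs the remaining $\pi_1(U_n)$-factor and its Frobenius action, the base change compatibility follows. The principal obstacle will be confirming that the abstract maximal-subobject description from part (a) indeed coincides, after these reductions, with the representation-theoretic pushforward; this is where the interplay among Theorems~\ref{T:total slope filtration}, \ref{T:unit-root representation} and Lemma~\ref{L:fully faithful} must be tracked carefully.
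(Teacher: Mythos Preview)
Your proposal is correct and follows essentially the same route as the paper: for (a) you build $\pi_{n*}$ as the preimage of the maximal subobject of $\calE$ lying in the essential image of the fully faithful $\pi_n^*$ (using Lemma~\ref{L:pullback subquotient}), and for (b) you reduce via Lemma~\ref{L:fully faithful} to constant diagonal Newton polygon, then via Theorem~\ref{T:total slope filtration} and a twist to the diagonally unit-root case, and finish with Theorem~\ref{T:unit-root representation}. Your write-up is somewhat more explicit than the paper's (e.g., the rank-bound argument for the maximal subobject and the representation-theoretic identification of $\pi_{n*}$ with an invariants functor), but the underlying strategy is identical.
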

\begin{proof}
Note that $\pi_n^*$ is fully faithful
and by Lemma~\ref{L:pullback subquotient} its essential image is closed under formation of subquotients.
 Hence given $\calE \in \PhiaIsoc(X)$, the collection of subobjects of $\calE$ which belong to the essential image of $\pi_n^*$
has a maximal element $\calF$; we define $\pi_{n*}(\calE)$ to be the object of $\FaIsoc(X_n)$ of which $\calF$ is a pullback. This yields (a).

To prove (b), using Lemma~\ref{L:fully faithful} we may reduce to the case where $U_i = X_i$ for $i=1,\dots,n$.
Using Theorem~\ref{T:total slope filtration} (and suitably enlarging $a$), we may further reduce to the case where
$\calE$ is diagonally unit-root. In this case, the claim follows at once from
Theorem~\ref{T:unit-root representation}.
\end{proof}

\begin{defn}
For $\calE \in \PhiaIsoc(X)$ and $\overline{x}_n \to X_n$ a geometric point, 
we may apply Theorem~\ref{T:relative DM} to split the pullback of $\calE$ to $\PhiaIsoc(X' \times_{\FF_p} \overline{x}_n)$
as a direct sum $\bigoplus_{r \in \QQ} \calE_r$.
For $r \in \ZZ$, we may further write $\calE_r = \calE'_r \boxtimes \calO(r)$ for some
$\calE'_r \in \PhiaIsoc(X')$. The object $\calE'_r$ is called the \emph{$r$-slice} of $\calE$ over $\overline{x}_n$.
\end{defn}

\begin{lemma} \label{L:same slices}
For $r \in \ZZ$, the $r$-slices of $\calE$ over any two geometric points of $U_n$ are isomorphic.
\end{lemma}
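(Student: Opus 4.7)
My plan is to define a pushforward functor $\pi'_*\colon \PhiaIsoc(U) \to \PhiaIsoc(U')$ along the projection $\pi'\colon U = U' \times U_n \to U'$, by symmetry with Lemma~\ref{L:pushforward}, and to show that the $0$-slice of $\calE$ at every geometric point $\overline{x}_n \to U_n$ is canonically isomorphic to $\pi'_*\calE$, hence independent of $\overline{x}_n$. First, by Lemma~\ref{L:fully faithful}, full faithfulness on $U$ lets me identify $r$-slices on $X'$ with their restrictions to $U'$, so I may replace $X$ by $U$ throughout; then tensoring $\calE$ with $\pi_n^*\calO(-r)$ shifts the $r$-slice to the $0$-slice, reducing to $r=0$.

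The constructions transposing Lemma~\ref{L:pullback subquotient} and Lemma~\ref{L:pushforward}(a) carry over without modification, with a geometric point of $X_n$ playing the role formerly played by a geometric point of $X'$: the essential image of $(\pi')^*\colon \PhiaIsoc(U') \to \PhiaIsoc(U)$ is closed under subquotients, and $\pi'_*\calE$ is defined as the object of $\PhiaIsoc(U')$ whose $(\pi')^*$-pullback is the maximal subobject of $\calE$ in that essential image.

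The crux of the proof is then a base change statement in the $U_n$-direction (different from that in Lemma~\ref{L:pushforward}(b), which concerns base change along $U_n$ for a pushforward to $X_n$): for each geometric point $\overline{x}_n \to U_n$, the pullback $(\pi')^*\pi'_*\calE$ restricts on $U' \times \overline{x}_n$ to the \emph{maximal} subobject of $\calE|_{U' \times \overline{x}_n}$ pulled back from $U'$, i.e., to $\calE'_0 \boxtimes \calO(0)$. Granting this, the isomorphism $\calE'_0 \cong \pi'_*\calE$ is valid at every $\overline{x}_n$, and the lemma follows. To establish the base change claim, I would mimic the proof of Lemma~\ref{L:pushforward}(b): reduce to the diagonally unit-root case by applying Theorem~\ref{T:total slope filtration} and twisting in the $X_1$-direction, then invoke Theorem~\ref{T:unit-root representation} to realize $\calE$ as a representation $(V, F_1, \ldots, F_n)$ of $G := \pi_1(U_0, \overline{x})$ with commuting $G$-equivariant Frobenii composing to the identity. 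Since $(\pi')^*$ corresponds to inflating representations from $U'$ with $F_n$ acting as the identity, the maximal pullback-from-$U'$ subobject of $\calE|_{U' \times \overline{x}_n}$ corresponds to the sub-representation $V^{F_n}$ with its residual $F_1, \ldots, F_{n-1}$-structure; this is intrinsic to $(V, F_1, \ldots, F_n)$ and manifestly independent of $\overline{x}_n$.

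The main obstacle will be making the representation-theoretic identification rigorous. This requires invoking Drinfeld's lemma (Theorem~\ref{T:Drinfeld}) to express $G$ as a suitable fiber product involving $\pi_1(U'_0)$ and $\pi_1(U_{n,0})$, and then matching this description with the analog of Theorem~\ref{T:unit-root representation} applied to $U'$, so that $V^{F_n}$ really does descend to a diagonally unit-root object of $\PhiaIsoc(U')$ whose $(\pi')^*$-pullback is the expected sub-representation of $V$. Once these compatibilities are in place, the base change claim, and with it the lemma, follow immediately.
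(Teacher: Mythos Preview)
The identification $\calE'_0 \cong \pi'_*\calE$ at the heart of your argument is false. A subobject of $\calE$ in the essential image of $(\pi')^*$ must have $\varphi_n$ acting by pullback \emph{and} must be constant along $U_n$; the $0$-slice at $\overline{x}_n$ imposes only the first condition, because restricting to a fiber automatically kills the variation along $U_n$. For a concrete counterexample, take $n=2$ with $X_1 = \Spec k_1$ a geometric point and $X_2$ a smooth connected curve, and let $\calE = \calO_{X_1} \boxtimes \calL$ where $\calL \in \FaIsoc(X_2)$ is a rank-one unit-root object corresponding to a nontrivial continuous character of $\pi_1(X_2)$. Then the $0$-slice at every $\overline{x}_2$ is the trivial rank-one object of $\FaIsoc(X_1)$, yet $\pi'_*\calE = 0$ since $H^0(X_2, \calL) = 0$. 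In your representation-theoretic picture, the maximal subobject of $\calE$ pulled back from $U'$ corresponds to the joint invariants under $F_n$ \emph{and} the $\pi_1(U_n)$-part of $G$, not to $V^{F_n}$ alone; so $V^{F_n}$ does not descend to $\PhiaIsoc(U')$ in the way you claim, and your base-change statement fails.

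The reduction step is also off target: the diagonal slope filtration of Theorem~\ref{T:total slope filtration} separates by total slope, so its graded pieces can still carry several $\varphi_n$-slopes, and since it is only a filtration you would in addition need to control how extension classes between slices vary with $\overline{x}_n$. The paper instead invokes the $n$-th partial slope filtration (Corollary~\ref{C:partial slope filtration}) to reduce directly to the case where $\calE$ has a single $\varphi_n$-slope, which one may twist to $0$; then the $0$-slice at each $\overline{x}_n$ is the full fiber $\calE|_{X'\times\overline{x}_n}$ (descended to $X'$ via Theorem~\ref{T:relative DM}), and one constructs, by analogy with Theorem~\ref{T:unit-root representation}, a continuous $\pi_1(X_n,\overline{x}_n)$-action on this slice. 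Parallel transport along $X_n$ then furnishes the required isomorphisms between slices over distinct geometric points, replacing your attempted global pushforward identification with a monodromy argument.
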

\begin{proof}
By twisting, we may reduce to the case $r=0$.
By Lemma~\ref{L:fully faithful}, we may reduce to the case $U = X$.
By Corollary~\ref{C:partial slope filtration}, we may further reduce to the case where the $n$-th partial slope filtration of $\calE$ consists of a single step.
In this case, by analogy with Theorem~\ref{T:unit-root representation},
we may construct a continuous action of $\pi_1(X_n, \overline{x}_n)$ on the $r$-slice of $\calE$ over $\overline{x}_n$;
we then obtain isomorphisms of $r$-slices over distinct geometric points via parallel transport.
\end{proof}

\begin{lemma} \label{L:convergent decomposition}
Let $\calE'_r$ be the $r$-slice of $\calE$ at some geometric point of $U_n$
and suppose that this is nonzero.
Then there exist an object $\calG \in \FaIsoc(U_n)$ and a nonzero map $\calE'_r \boxtimes \calG \to \calE|_{X' \times_{\FF_p} U_n}$.
\end{lemma}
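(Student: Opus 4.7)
The plan is to construct $\calG$ using the right adjoint $\pi_{n*}$ furnished by Lemma~\ref{L:pushforward} and then produce the map via the counit of adjunction. Let $\pi_n\colon X' \times_{\FF_p} U_n \to U_n$ denote the projection and set
\[
\calH := \bigl((\calE'_r)^\dual \boxtimes \calO_{U_n}\bigr) \otimes \calE|_{X' \times_{\FF_p} U_n} \in \PhiaIsoc(X' \times_{\FF_p} U_n),
\]
where $(\calE'_r)^\dual \boxtimes \calO_{U_n}$ denotes the pullback of $(\calE'_r)^\dual$ from $X'$. Put $\calG := \pi_{n*}\calH \in \FaIsoc(U_n)$. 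The counit $\pi_n^*\calG \to \calH$ of the $(\pi_n^*, \pi_{n*})$-adjunction, transported via tensor--hom duality, yields a map $\calE'_r \boxtimes \calG \to \calE|_{X' \times_{\FF_p} U_n}$; since the counit corresponds under adjunction to the identity on $\calG$, it is nonzero precisely when $\calG \neq 0$. Thus it suffices to show $\calG$ is nonzero.

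By Lemma~\ref{L:pushforward}(b) this reduces to showing $\pi_{n,\overline{x}_n*}(\calH|_{X' \times_{\FF_p} \overline{x}_n}) \neq 0$, where $\overline{x}_n$ is the geometric point used to define the $r$-slice. The relative Dieudonn\'e--Manin theorem (Theorem~\ref{T:relative DM}) gives
\[
\calE|_{X' \times_{\FF_p} \overline{x}_n} \cong \bigoplus_{s \in \QQ} \calE'_s \boxtimes \calO(s),
\]
and therefore $\calH|_{X' \times_{\FF_p} \overline{x}_n} \cong \bigoplus_s \bigl((\calE'_r)^\dual \otimes \calE'_s\bigr) \boxtimes \calO(s)$.

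It now suffices to exhibit a nonzero morphism $\calO(r) \to \pi_{n,\overline{x}_n*}(\calH|_{X' \times_{\FF_p} \overline{x}_n})$; by the pullback--pushforward adjunction, such a morphism is the same datum as a nonzero morphism $\calO_{X'} \boxtimes \calO(r) \to \calH|_{X' \times_{\FF_p} \overline{x}_n}$. Since $\Hom_{\FaIsoc(\overline{x}_n)}(\calO(r), \calO(s)) = 0$ for $s \neq r$, such a morphism must factor through the $s=r$ summand. Internalizing the Hom and canceling the copies of $\calO(r)$ via a twist, one reduces to computing $H^0(X' \times_{\FF_p} \overline{x}_n,\ ((\calE'_r)^\dual \otimes \calE'_r) \boxtimes \calO_{\overline{x}_n})$, which by Lemma~\ref{L:cohomology pullback} equals $H^0(X', (\calE'_r)^\dual \otimes \calE'_r) = \End_{\PhiaIsoc(X')}(\calE'_r)$. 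The identity $\mathrm{id}_{\calE'_r}$ is a nonzero element of this endomorphism algebra because $\calE'_r \neq 0$, supplying the required morphism. The principal delicate point is the interlocking of the several adjunctions together with the K\"unneth-type $H^0$ reduction, but all of these are furnished by the formalism developed in the preceding sections.
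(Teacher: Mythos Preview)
Your proof is correct and follows essentially the same approach as the paper: both define $\calG = \pi_{n*}\bigl(\pi_n^{\prime *}(\calE^{\prime\dual}_r) \otimes \calE\bigr)$, construct the map via the adjunction counit (equivalently, the canonical inclusion) followed by the contraction $\calE'_r \otimes \calE^{\prime\dual}_r \to \calO$, and verify nonvanishing by restricting to a geometric point of $U_n$ and invoking the relative Dieudonn\'e--Manin decomposition. The paper additionally cites Lemma~\ref{L:same slices} so that the slice computation is valid at \emph{every} geometric point of $U_n$, but as your argument shows, checking at the single point where $\calE'_r$ is defined already suffices to see $\calG \neq 0$.
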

\begin{proof}
Let $\pi_n^{\prime *}$ denote the pullback functor from $\PhiaIsoc(X')$ to $\PhiaIsoc(X)$.
Put $\calG := \pi_{n*}(\pi_n^{\prime *}(\calE^{\prime \dual}_r) \otimes \calE)$; by Lemma~\ref{L:same slices} and Lemma~\ref{L:pushforward}(b), we have $\calG \neq 0$.
The desired map is given by composing the canonical inclusion $\calE'_r \boxtimes \calG \to \pi_n^{\prime *}(\calE'_r) \otimes (\pi_n^{\prime *}(\calE^{\prime \dual}_r) \otimes \calE)$,
the identification $\pi_n^{\prime *}(\calE'_r) \otimes (\pi_n^{\prime *}(\calE^{\prime \dual}_r) \otimes \calE) \cong
\pi_n^{\prime *}(\calE'_r \otimes \calE^{\prime \dual}_r) \otimes \calE$,
and the contraction $\pi_n^{\prime *}(\calE'_r \otimes \calE^{\prime \dual}_r) \otimes \calE \to \calE$. We may see that this map is nonzero by slicing again.
\end{proof}

\begin{lemma} \label{L:convergent decomposition2}
Suppose that $\calE$ is absolutely irreducible. Then 
there exist absolutely irreducible objects $\calF \in \PhiaIsoc(X'), \calG \in \FaIsoc(X_n)$ such that $\calE \cong \calF \boxtimes \calG$.
\end{lemma}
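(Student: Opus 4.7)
The plan is to construct a nonzero morphism $\calF \boxtimes \calG \to \calE$ with $\calF \in \PhiaIsoc(X')$ and $\calG \in \FaIsoc(X_n)$ both absolutely irreducible (possibly after enlarging $a$), and then upgrade this morphism to an isomorphism using the absolute irreducibility of $\calE$.

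First, apply Theorem~\ref{T:relative DM} at a geometric point $\overline{x}_n \to U_n$ to select an $r$ with $\calE'_r \neq 0$. After enlarging $a$ via Remark~\ref{R:reduce to first power}, let $\calF \in \PhiaIsoc(X')$ be an absolutely irreducible subquotient of $\calE'_r$. Set
\[
\calG := \pi_{n*}\bigl(\pi_n^{\prime *}(\calF^\dual) \otimes \calE\bigr) \in \FaIsoc(X_n)
\]
via Lemma~\ref{L:pushforward}(a). Lemma~\ref{L:pushforward}(b) guarantees that the formation of $\calG$ commutes with restriction to $U_n$, so $\calG|_{U_n}$ is the analogous pushforward computed on $X' \times_{\FF_p} U_n$, which is nonzero as in the proof of Lemma~\ref{L:convergent decomposition}; hence $\calG \neq 0$. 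The adjunction $\pi_n^* \dashv \pi_{n*}$ combined with the tensor--Hom adjunction yields a canonical morphism $\Psi\colon \calF \boxtimes \calG \to \calE$ in $\PhiaIsoc(X)$ whose restriction to $X' \times_{\FF_p} U_n$ is the nonzero morphism of Lemma~\ref{L:convergent decomposition}, so $\Psi \neq 0$.

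Next, refine $\calG$ to an absolutely irreducible object. Choose a Jordan--H\"older filtration $0 = \calG_0 \subset \cdots \subset \calG_l = \calG$ in $\FaIsoc(X_n)$; using exactness of $\calF \boxtimes {-}$, let $i_0$ be the smallest index for which $\Psi|_{\calF \boxtimes \calG_{i_0}} \neq 0$. Then $\Psi$ factors through the irreducible quotient $\calG_{i_0}/\calG_{i_0-1}$, which we relabel as $\calG$; after a further enlargement of $a$ we may arrange for $\calG$ to be absolutely irreducible while preserving nonvanishing of $\Psi$.

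By absolute irreducibility of $\calE$, the nonzero $\Psi$ is surjective. Absolute irreducibility of $\calF$ and $\calG$ implies that of $\calF \boxtimes \calG$---one checks this by restricting to a product of geometric points, invoking Corollary~\ref{C:relative DM} to decompose the restriction, and using the uniqueness of Lemma~\ref{L:same product} to exclude proper subobjects---so $\Psi$ is injective as well. Hence $\calF \boxtimes \calG \cong \calE$.

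The principal obstacle is verifying that the external product of two absolutely irreducible objects is itself absolutely irreducible, since this is what converts surjectivity of $\Psi$ (from irreducibility of $\calE$) into an isomorphism. Settling this point requires combining the Tannakian-style description of diagonally unit-root objects from Theorem~\ref{T:unit-root representation} with the uniqueness of external-product decompositions in Lemma~\ref{L:same product}, after first reducing the general case to the diagonally unit-root case via Theorem~\ref{T:total slope filtration}.
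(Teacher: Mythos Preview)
Your overall strategy matches the paper's: produce a nonzero morphism $\calF \boxtimes \calG \to \calE$ with $\calF$ and $\calG$ absolutely irreducible, then conclude it is an isomorphism. Two differences are worth flagging.

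First, you build $\calG$ directly in $\FaIsoc(X_n)$ by applying $\pi_{n*}$ from Lemma~\ref{L:pushforward}(a) to $\pi_n^{\prime*}(\calF^\dual)\otimes\calE$ and then invoking base change over $U_n$ to see it is nonzero. The paper instead first obtains $\calG$ only over $U_n$ via Lemma~\ref{L:convergent decomposition}, and then runs a separate extension argument: it pulls back to a product of geometric points $\overline{x}'$ of $X'$, decomposes $\calE|_{\overline{x}'\times_{\FF_p}X_n}$ via Theorem~\ref{T:relative DM} to find a nonzero $\calH\in\FaIsoc(X_n)$ and a surjection $\calG^{\oplus m}\to\calH|_{U_n}$, concludes that all constituents of $\calH|_{U_n}$ are isomorphic to $\calG$, and replaces $\calG$ by an absolutely irreducible quotient $\calG'$ of $\calH$, finally extending the morphism back to $X$ via Lemma~\ref{L:fully faithful}. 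Your route is more economical. One small correction: take $\calF$ to be an absolutely irreducible \emph{subobject} of $\calE'_r$, not merely a subquotient, so that the inclusion $\calF\hookrightarrow\calE'_r$ directly witnesses $\Hom_{X'}(\calF,\calE'_r)\neq 0$ and hence $\calG|_{\overline{x}_n}\neq 0$; for an arbitrary subquotient this nonvanishing is not automatic.

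Second, for the absolute irreducibility of $\calF\boxtimes\calG$, the paper uses the K\"unneth-type identity
\[
H^0\bigl(X,(\calF\boxtimes\calG)^\dual\otimes(\calF\boxtimes\calG)\bigr)\cong H^0(X',\calF^\dual\otimes\calF)\otimes_{\QQ_{p^a}}H^0(X_n,\calG^\dual\otimes\calG)\cong\QQ_{p^a}.
\]
Your proposed route---reduce to the diagonally unit-root case via Theorem~\ref{T:total slope filtration} and then invoke Theorem~\ref{T:unit-root representation}---does not go through as stated, because Theorem~\ref{T:total slope filtration} requires constant diagonal Newton polygon, which you have not established for $\calF\boxtimes\calG$ (neither $\calF$ nor $\calG$ is known to have constant Newton polygon). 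Likewise, Lemma~\ref{L:same product} compares morphisms between two external products of absolutely irreducibles; it does not by itself rule out proper subobjects of a single such product. The K\"unneth computation is both shorter and sidesteps these issues.
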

\begin{proof}
As per Remark~\ref{R:reduce to first power}, we are free to increase $a$.
By Lemma~\ref{L:convergent decomposition}, we have absolutely irreducible objects $\calF \in \PhiaIsoc(X'), \calG \in \FaIsoc(U_n)$ 
and a surjective morphism $\calF \boxtimes \calG \to \calE|_{X' \times_{\FF_p} U_n}$.
Pulling back from $X'$ to a product of geometric points $\overline{x}'$,
we obtain a surjective morphism $\calF|_{\overline{x}'} \boxtimes \calG \to \calE|_{\overline{x}' \times_{\FF_p} U_n}$.
By decomposing $\calE|_{\overline{x}' \times_{\FF_p} X_n}$ according to Theorem~\ref{T:relative DM},
we obtain a surjective morphism $\calG^{\oplus m} \to \calH|_{U_n}$ in $\FaIsoc(U_n)$
for some nonzero $\calH \in \FaIsoc(X_n)$ and some positive integer $m$. In particular, all of the constituents of $\calH$ in $\FaIsoc(U_n)$
are isomorphic to $\calG$, so there exists a surjection $\calH|_{U_n} \to \calG$. 
We may thus replace $\calG$ with an absolutely irreducible quotient $\calG'$ of $\calH$ in $\FaIsoc(X_n)$,
then apply Lemma~\ref{L:fully faithful} to extend the resulting morphism $\calF \boxtimes (\calG'|_{U_n}) \to \calE|_{X' \times_{\FF_p} U_n}$ to a surjective morphism $\calF \boxtimes \calG' \to \calE$.
Since $\calF$ and $\calG'$ are both absolutely irreducible, so is 
$\calF \boxtimes \calG'$ because
\[
H^0(X, (\calF \boxtimes \calG')^\dual \otimes (\calF \boxtimes \calG'))
\cong H^0(X', \calF^\dual \otimes \calF) \otimes_{\QQ_{p^a}}
H^0(X_n, \calG^{\prime \dual} \otimes \calG)
\cong \QQ_{p^a} \otimes_{\QQ_{p^a}} \QQ_{p^a} \cong \QQ_{p^a};
\]
the map $\calF \boxtimes \calG' \to \calE$ must therefore be an isomorphism.
\end{proof}

\begin{theorem} \label{T:convergent decomposition}
Any absolutely irreducible object in $\PhiaIsoc(X)$ is isomorphic to
$\calE_1 \boxtimes \dots \boxtimes \calE_n$ for some absolutely irreducible objects $\calE_i \in \FaIsoc(X_i)$.
\end{theorem}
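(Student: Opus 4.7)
The plan is to deduce the theorem by induction on $n$, using Lemma~\ref{L:convergent decomposition2} as the one-step peel-off.

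The base case $n=1$ is immediate: by Definition~\ref{D:phi-isocrystal} the category $\PhiaIsoc(X_1)$ coincides with $\FaIsoc(X_1)$ (the single partial Frobenius agrees with the diagonal Frobenius), so any absolutely irreducible object is itself of the required form.

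For the inductive step, suppose the statement holds for products of fewer than $n$ factors, and let $\calE \in \PhiaIsoc(X)$ be absolutely irreducible. Apply Lemma~\ref{L:convergent decomposition2} to obtain absolutely irreducible objects $\calF \in \PhiaIsoc(X')$ and $\calG \in \FaIsoc(X_n)$ with $\calE \cong \calF \boxtimes \calG$, where $X' = X_1 \times_{\FF_p} \cdots \times_{\FF_p} X_{n-1}$. Now invoke the induction hypothesis on $\calF$ to get absolutely irreducible $\calE_i \in \FaIsoc(X_i)$ for $i=1,\dots,n-1$ with $\calF \cong \calE_1 \boxtimes \cdots \boxtimes \calE_{n-1}$. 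Setting $\calE_n := \calG$ and using the evident associativity of the external product construction from Definition~\ref{D:external product} completes the decomposition.

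The substantive content has already been established: the hard work was Lemma~\ref{L:convergent decomposition2} (which itself relied on Lemma~\ref{L:convergent decomposition}, the slice formalism, and the fully faithful extension provided by Lemma~\ref{L:fully faithful}), together with the Dieudonn\'e--Manin splitting of Theorem~\ref{T:relative DM} to match $\calG$ along the $n$-th factor. So no further obstacle arises in the induction itself; the only care needed is to confirm that the coefficient field $\QQ_{p^a}$ is consistent throughout the inductive cascade, which is ensured by the fact that Lemma~\ref{L:convergent decomposition2} permits us to enlarge $a$ (via Remark~\ref{R:reduce to first power}) finitely many times without losing absolute irreducibility of the factors.
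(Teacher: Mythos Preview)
Your argument is correct and matches the paper's own proof, which reads in its entirety: ``This follows from Lemma~\ref{L:convergent decomposition2} by induction on $n$.'' Your additional remarks about the base case, associativity of $\boxtimes$, and coefficient-field consistency are harmless elaborations, though note that Lemma~\ref{L:convergent decomposition2} as stated already produces $\calF$ and $\calG$ over the same $\QQ_{p^a}$, so no enlargement of $a$ is actually needed in the induction itself.
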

\begin{proof}
This follows from Lemma~\ref{L:convergent decomposition2} by induction on $n$.
\end{proof}

\begin{remark} \label{R:convergent Tannakian}
When the base fields $k_i$ are all finite, one can reformulate Theorem~\ref{T:convergent decomposition} in the language of Tannakian fundamental groups to obtain a much more direct analogue of Theorem~\ref{T:Drinfeld}. See \cite[\S 3]{kedlaya-xu}.
\end{remark}

\section{Overconvergent $\Phi$-isocrystals}

We conclude by transferring our previous results, particularly Theorem~\ref{T:convergent decomposition}, from convergent to overconvergent $F$-isocrystals.

\begin{defn}
For $i=1,\dots,n$, let $X_i \to Y_i$ be an open immersion of schemes of finite type over $k_i$
and put
\[
Y := Y_1 \times_{\FF_p} \cdots \times_{\FF_p} Y_n.
\]
We may again define the category
$\PhiaIsoc(X, Y)$ using local liftings as in \cite[\S 2]{kedlaya-isocrystals}; it is again a stack for the Zariski topology and the \'etale topology 
on each $X_i$.

To make this explicit, suppose that $X_i$ and $Y_i$ are both affine.
Let $Q_i$ be an affine formal scheme over $W(k_i)$ with special fiber $Y_i$
which is smooth in a neighborhood of $X_i$, and such that the open subscheme supported on $X_i$ is isomorphic (in a chosen way) to $P_i$.
Let $Q$ be the product $Q_1 \times_{\ZZ_p} \cdots \times_{\ZZ_p} Q_n$.
Let $R$ be the direct limit of $\calO(U)$ as $U$ runs over products of strict neighborhoods of the tube of $X_i$ in the Raynaud generic fiber of $Q_i$ over $K_i$.
Then an object of $\PhiaIsoc(X,Y)$ is a finite projective module $\calE$ over $R$ equipped with an integrable $K$-linear connection
and horizontal isomorphisms $F_i\colon (\sigma_i^a)^* \calE \cong \calE$ that commute pairwise in the same sense as in Definition~\ref{D:phi-isocrystal}.

Again by comparing to a site-theoretic definition using all possible local lifts, one can show that the definition of $\PhiaIsoc(X,Y)$ is independent of the choice of $X$
(compare \cite{lestum}). We may then globalize to allow the case where each $Y_i$ is proper over $k_i$,
and show that the resulting category depends only on the $X_i$ (compare \cite[Lemma~2.6]{kedlaya-isocrystals}.
This gives the category of 
\emph{overconvergent $\Phi^a$-isocrystals} on $X$, denoted $\PhiaIsoc^\dagger(X)$.

The category $\PhiaIsoc(X,Y)$ is a $\QQ_{p^a}$-linear tensor category, where $\QQ_{p^a}$ denotes the unramified
extension of $\QQ_p$ with residue field of degree $a$ over $\FF_p$.
\end{defn}

We have the following analogue of Lemma~\ref{L:fully faithful}.
\begin{lemma} \label{L:fully faithful2}
The restriction functor $\PhiaIsoc(X, Y) \to \PhiaIsoc(X)$ is fully faithful.
\end{lemma}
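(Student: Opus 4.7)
The plan is to mimic the proof of Lemma~\ref{L:fully faithful}, substituting the classical single-factor full faithfulness of overconvergent into convergent $F$-isocrystals for the role played there by the convergent extension theorem. First, apply Remark~\ref{R:reduce to first power} to reduce to the case $a=1$, and use Zariski stackiness to reduce to the case where each $X_i$ and each $Y_i$ is affine. Next, factor the restriction functor as a composition of $n$ intermediate restriction functors, each of which weakens the overconvergence condition in exactly one of the $n$ factors; since full faithfulness is closed under composition, it suffices to treat one such step, say in the $n$-th factor. Using internal Homs, this further reduces to the assertion that every horizontal, $\Phi$-stable element of the $n$-th-convergent enlargement of the underlying module of an object already lies in the $n$-th-overconvergent module.

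The core technical step is an overconvergent analogue of Lemma~\ref{L:restrict to point}: a section of the $n$-th-convergent module belongs to the $n$-th-overconvergent submodule if and only if for every geometric point $\overline{x} \to X' := X_1 \times_{\FF_p} \cdots \times_{\FF_p} X_{n-1}$, its pullback belongs to the overconvergent module relative to $W(\kappa(\overline{x})) \otimes_{\ZZ_p} Q_n$. I would prove this by reduction modulo successive powers of $p$, paralleling the structure of Lemma~\ref{L:restrict to point}. Modulo $p$, the claim becomes an assertion that an element of $\Gamma(X', \calO) \otimes_{\FF_p} \Gamma(V, \calO)$, where $V$ is a suitable strict neighborhood of the tube of $X_n$ in the rigid generic fiber of $Q_n$, already lies in $\Gamma(X', \calO) \otimes_{\FF_p} \Gamma(V', \calO)$ for some shrunken strict neighborhood $V' \subset V$ whenever this holds after pullback to every geometric point of $X'$. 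This in turn follows from a variant of Lemma~\ref{L:mod p restrict to point}, obtained by choosing an $\FF_p$-basis for $\Gamma(V, \calO)/\Gamma(V', \calO)$ and separating coefficients.

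Granted this analogue, the problem reduces to the case $n = 1$, where $X'$ is a single geometric point. In that setting, the desired assertion is the classical full faithfulness of the restriction functor from overconvergent to convergent $F$-isocrystals on a single smooth variety, due to Kedlaya and also in Le Stum's formulation.

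The main obstacle is establishing the overconvergent analogue of Lemma~\ref{L:restrict to point}. Unlike the convergent case, overconvergence is a growth condition across a filtered colimit of strict neighborhoods of shrinking radius, rather than a purely algebraic condition on a fixed ring, so uniformity of the radius of overconvergence as the geometric point $\overline{x}$ varies over $X'$ must be handled with care. The reduction modulo $p^k$ mitigates this by allowing the strict neighborhood radius to be fixed at each finite level; the $p$-adic limit then recovers overconvergence because the overconvergent module is the intersection of its images in the sections over each strict neighborhood.
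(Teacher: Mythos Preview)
Your overall architecture---factor into one-coordinate steps, reduce to extending horizontal $\Phi$-invariant sections, and reduce to the classical single-factor statement by checking at geometric points of $X'$---matches the paper's. The gap is in your proposed overconvergent analogue of Lemma~\ref{L:restrict to point}.

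The mod-$p^k$ reduction that drives Lemma~\ref{L:restrict to point} works because both $\Gamma(P,\calO)$ and $\Gamma(Q,\calO)$ are $p$-adically complete, so membership can be tested level by level. The overconvergent ring $R$ is a \emph{direct limit} of affinoid algebras over strict neighborhoods; it is not $p$-adically complete, and ``reduction mod $p$'' does not yield the object you describe (the strict-neighborhood sections $\Gamma(V,\calO)$ are $K_n$-algebras, so the tensor product $\Gamma(X',\calO)\otimes_{\FF_p}\Gamma(V,\calO)$ you write down is not what arises). More importantly, even granting some integral-model interpretation, the classical single-factor theorem only tells you that for each geometric point $\overline{x}\to X'$ there exists \emph{some} strict neighborhood $U_{\overline{x}}$ on which the pulled-back section lives. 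You need a single $U$ that works for all $\overline{x}$ simultaneously, and nothing in your argument produces that uniformity; you flag the concern in your last paragraph but the fix you sketch does not actually supply it.

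The paper handles exactly this point by a different mechanism: after reducing to $X_n=\GG_m^d\subset Y_n=\AAA^d$, it uses the Frobenius equation $A\varphi(\bv)=\bv$ together with log-convexity of Gauss norms to convert the qualitative statement $|\bv|_r<\infty$ (which is what the pointwise classical theorem gives) into a quantitative bound
\[
|\bv|_r \le |\bv|_1 \cdot |A^{-1}|_{r^{1/p}}^{\,p/(p-1)}
\]
for $r\in[r_0^p,1)$. The right-hand side depends only on globally controlled data, so the radius of overconvergence is uniform over $X'$. This norm estimate is the missing ingredient in your approach.
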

\begin{proof}
It is sufficient to prove that the restriction functor $\PhiaIsoc(X, Y) \to \PhiaIsoc(X, Y')$ is fully faithful for
$Y' := Y_1 \times_{\FF_p} \cdots \times_{\FF_p} Y_{n-1} \times_{\FF_p} X_n$, as the functor in question can be represented as a composition
of functors of this form.
The proof of Lemma~\ref{L:fully faithful} carries over except that
to show that a given element of $H^0(X, \calE)$ extends, it is not sufficient to pull back
to individual geometric points of $X_1 \times_{\FF_p} \cdots \times_{\FF_p} X_{n-1}$; however, we claim that we may infer
a suitable uniformity over these points which does suffice.

To make this explicit,
we assume that $X = \GG_m^d, Y = \AAA^d$ (the general case can be reduced to this using \cite[Remark~2.9]{kedlaya-isocrystals}).
We then take $P_n := \Spf W(k_n) \langle T_1^{\pm}, \dots, T_d^{\pm} \rangle$, $Q_n := \Spf W(k_n) \langle T_1,\dots,T_n \rangle$.
For $r < 1$, let $U_r$ be the strict neighborhood of the tube of $X_n$ given by the conditions $|T_1|,\dots,|T_n| \geq r$.
Fix a finite set of module generators of $\calE$, let $\calF$ be the free module on these generators, and fix a splitting 
$\calF \cong \calE \oplus \calE'$ of the surjection $\calF \to \calE$.
Let $\widehat{\calE}, \widehat{\calF}$ be the base extensions of $\calE, \calF$ to $\Gamma(P, \calO)$.
Let $A$ be the matrix of action on the basis of $\calF$ for the endomorphism given by projecting to $\calE$, applying $\varphi$ there, and injecting back to $\calE$.
For $r \leq 1$, let $\left| \bullet \right|_r$ be the supremum norm on $\calF$ given by taking the supremum over $U_r$.
Then we can choose $r_0$ such that $|A|_{r_0}, |A^{-1}|_{r_0} > \infty$.

Let $\bv \in \widehat{\calE}$ be the image of an element of $H^0(X, \widehat{\calF})$.
Then $A \varphi(\bv) = \bv$, where now the action of $\varphi$ on $\widehat{\calF}$ is coefficientwise. For $r \in [r_0^p, 1)$, we have
\[
\left| \bv \right|_r \leq \left| \bv \right|_{r^{1/p}} \left| A^{-1} \right|_{r^{1/p}};
\]
on the other hand, by the log-convexity of Gauss norms,
\[
\left| \bv \right|_{r^{1/p}} \leq \left| \bv \right|_r^{1/p} \left| \bv \right|_1^{1-1/p}.
\]
Putting these together yields
\[
\left| \bv \right|_r \leq \left| \bv \right|_1 \left| A^{-1} \right|_{r^{1/p}}^{p/(p-1)}
\]
provided that $\left| \bv \right|_r < \infty$.
The upshot is that given the qualitative statement that $\left| \bv \right|_r < \infty$ for some $r \in (0,1)$, we may then deduce a quantitative bound
for a specific value of $r$; this justifies the argument in the first paragraph.
(Compare \cite[Proposition~2.5.8]{kedlaya-rel}.)
\end{proof}

We have the following analogue of Theorem~\ref{T:relative DM}.

\begin{theorem} \label{T:relative DM overconvergent}
Suppose that $k_n$ is algebraically closed and $X_n = Y_n = \Spec k_n$.
Put $X' := X_1 \times_{\FF_p} \cdots \times_{\FF_p} X_{n-1}$
and $Y' := Y_1 \times_{\FF_p} \cdots \times_{\FF_p} Y_{n-1}$.
Then every object $\calE \in \PhiaIsoc(X,Y)$ decomposes uniquely as a direct sum
\[
\calE \cong \bigoplus_{d \in \QQ} \calE_d
\]
in which for $d = \frac{r}{s}$ in lowest terms,
$\calE_d$ is obtained by pulling back an object of $\PhiaIsoc(X', Y)$ equipped with an
endomorphism $F_n$ such that $F_n^s = p^r$, which then gives the action of $\varphi_n$ on the pullback.
\end{theorem}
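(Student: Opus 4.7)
The plan is to lift the convergent decomposition of Theorem~\ref{T:relative DM} to the overconvergent category via the full faithfulness of restriction (Lemma~\ref{L:fully faithful2}). Concretely, I would restrict $\calE$ to $\PhiaIsoc(X)$ and apply Theorem~\ref{T:relative DM} to the restriction, yielding a decomposition $\bigoplus_{d \in \QQ} \calE^c_d$. This decomposition corresponds to a family of orthogonal idempotents in $\End_{\PhiaIsoc(X)}(\calE)$ summing to the identity. By Lemma~\ref{L:fully faithful2}, the restriction functor induces a ring isomorphism $\End_{\PhiaIsoc(X,Y)}(\calE) \to \End_{\PhiaIsoc(X)}(\calE)$, so these projectors lift uniquely to a family $\{e_d\}$ of orthogonal idempotents in $\End_{\PhiaIsoc(X,Y)}(\calE)$ summing to the identity (all defining algebraic identities transport across a ring isomorphism). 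Setting $\calE_d := e_d(\calE)$ yields the desired direct sum decomposition in $\PhiaIsoc(X, Y)$. The relation $\varphi_n^{as} - p^r = 0$ on each $\calE_d$ likewise lifts from the convergent case by applying full faithfulness to the morphism $\varphi_n^{as} - p^r \colon \calE_d \to \calE_d$.

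The second task is to exhibit each $\calE_d$ as the pullback of an object of $\PhiaIsoc(X', Y')$ equipped with an endomorphism $F_n$ satisfying $F_n^s = p^r$. The key simplification is that $Y_n = X_n = \Spec k_n$ is a geometric point, so one may take $Q_n = \Spf W(k_n)$, which has $\Omega^1_{Q_n} = 0$ and Raynaud generic fiber a single point. Consequently the overconvergent section ring $R$ for $(X, Y)$ factors as $R \cong R' \widehat{\otimes}_{\QQ_p} K_n$, where $R'$ is the analogous ring for $(X', Y')$; the connection on $\calE_d$ has no $n$-th component; and the operator $\varphi_n^a$ is $\sigma_n^a$-semilinear over $K_n$ but $R'$-linear on the $R'$-factor. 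Using the relation $F_n^s = p^r$ together with the hypothesis that $k_n$ is algebraically closed (so $K_n$ contains enough roots to mimic the Dieudonn\'e--Manin descent from the proof of Theorem~\ref{T:relative DM}), one extracts an $R'$-module with the remaining $\Phi^a$-isocrystal data, producing $\calF_d \in \PhiaIsoc(X', Y')$ with the desired endomorphism. Uniqueness follows from that in Theorem~\ref{T:relative DM} combined with faithfulness of restriction.

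The main obstacle will be rigorously verifying that the descended $R'$-module is itself overconvergent, i.e., extends to sections over a strict neighborhood of the tube of $X'$ in $Y'$. Intuitively this should be automatic, because a strict neighborhood of the tube of $X$ in $Y$ is simply the product of a strict neighborhood of the tube of $X'$ in $Y'$ with $\Spf W(k_n)$ (the tube in the $n$-th factor being trivial); however, tracking a finite projective module with semilinear $F_n$-action through the completed tensor product $R' \widehat{\otimes}_{\QQ_p} K_n$ under the constraint $F_n^s = p^r$ requires care. A useful preliminary reduction is to first enlarge $a$ to a multiple of $s$ via Remark~\ref{R:reduce to first power}, after which $F_n$ becomes essentially linear and the separation of the $R'$-factor becomes more transparent.
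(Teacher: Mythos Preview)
Your approach is correct and coincides with the paper's: restrict to $\PhiaIsoc(X)$, apply Theorem~\ref{T:relative DM}, and lift the resulting idempotents (and the relation $\varphi_n^{as}=p^r$) via the full faithfulness of Lemma~\ref{L:fully faithful2}. The obstacle you flag in your third paragraph is not genuine: since $X_n = Y_n$, the factorization $R \cong R' \widehat{\otimes}_{\QQ_p} K_n$ you already observed means the descent to $R'$ is purely the Dieudonn\'e--Manin computation over $K_n$, and the resulting $R'$-module is overconvergent by construction.
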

\begin{proof}
By Theorem~\ref{T:relative DM}, we obtain a corresponding decomposition in $\PhiaIsoc(X)$;
we may then use Lemma~\ref{L:fully faithful2} to promote this decomposition to $\Phi^a\Isoc(X, Y)$.
\end{proof}

We have the following analogue of Lemma~\ref{L:pushforward}, but with a very different proof.
\begin{lemma} \label{L:pushforward2}
Let $\pi_n\colon X \to X_n$ be the projection map.
\begin{enumerate}
\item[(a)]
The pullback functor $\pi_n^*$ from $\FaIsoc^\dagger(X_n)$ to $\PhiaIsoc^\dagger(X)$ admits a left and right adjoint $\pi_{n*}$.
\item[(b)]
There exists an open dense subset $V_n$ of $X_n$ such that the formation of $\pi_{n*}(\calE)$ commutes with base change from $V_n$ to any other scheme.
\end{enumerate}
\end{lemma}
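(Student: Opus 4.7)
The plan is to leverage the convergent pushforward of Lemma~\ref{L:pushforward} together with the full faithfulness of Lemma~\ref{L:fully faithful2}. Given $\calE \in \PhiaIsoc^\dagger(X)$, we may also view $\calE$ as an object of $\PhiaIsoc(X)$ via restriction and form $\calF := \pi_{n*}(\calE) \in \FaIsoc(X_n)$ using the convergent pushforward. The first main task is to lift $\calF$ canonically to $\FaIsoc^\dagger(X_n)$; this will be our definition of the overconvergent $\pi_{n*}(\calE)$. The right-adjoint property is then automatic, since by two applications of Lemma~\ref{L:fully faithful2} (once on $X$ and once on $X_n$), morphisms $\pi_n^* \calG \to \calE$ and $\calG \to \pi_{n*}(\calE)$ in the overconvergent categories coincide with the corresponding morphisms in the convergent categories.

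To produce the overconvergent structure on $\calF$, I would work locally on a toric model, taking $X_i = \GG_m^{d_i}$ and $Y_i = \AAA^{d_i}$ as in the proof of Lemma~\ref{L:fully faithful2}. The submodule $\pi_n^* \calF \subseteq \calE$ is locally characterized by fixed-point-type equations for each $\varphi_i$ with $i < n$, of exactly the form appearing in that proof. Since $\calE$ is overconvergent, the log-convexity estimate $\lvert \bv \rvert_r \leq \lvert \bv \rvert_1 \lvert A^{-1} \rvert_{r^{1/p}}^{p/(p-1)}$ shows that any such fixed-point section extends to a strict neighborhood of the tube of $X$ in $Y$. Taking extensions for a full set of generators and restricting along a local section of $\pi_n$ (for instance the inclusion of a chosen point of $X'$, possibly after enlarging $k_n$) yields overconvergent generators of $\calF$ on $X_n$, hence the desired structure.

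For the left-adjoint assertion in (a), I would use duality. Since $\pi_n^*$ commutes with tensor products and dualization, the candidate left adjoint is $\calE \mapsto (\pi_{n*}(\calE^\dual))^\dual$. One identifies this with $\pi_{n*}$ itself by a fiberwise check: over a product of geometric points of $X_1,\dots,X_{n-1}$, iterated application of Theorem~\ref{T:relative DM overconvergent} gives a decomposition of $\calE$ into eigenpieces for the $\varphi_i$, and both functors extract the same eigenpiece. The identification extends globally via Lemma~\ref{L:fully faithful2} applied on $X_n$.

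For (b), set $V_n := U_n$ where the $U_i \subseteq X_i$ are the open dense subschemes of Theorem~\ref{T:total Newton stratification} applied to the convergent restriction of $\calE$. Over $V_n$, the convergent base-change statement of Lemma~\ref{L:pushforward}(b) applies to $\calF$, and transfers to the overconvergent category by Lemma~\ref{L:fully faithful2}. I expect the main obstacle to be the second paragraph: although the fixed-point estimate works at each individual geometric fiber of $\pi_n$, making the extension of sections uniform as the fiber over $X_n$ varies is what ultimately forces the restriction to $V_n$ in part (b), and careful bookkeeping of constants in the log-convexity estimate appears to be required.
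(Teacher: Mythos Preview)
Your approach has a genuine gap, and the paper takes a completely different route. The paper's proof is a one-line citation of \cite[Theorem~7.3.3]{kedlaya-fin}, the finiteness theorem for rigid cohomology with coefficients; the paper even flags this by writing ``the following analogue of Lemma~\ref{L:pushforward}, but with a very different proof.''

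The gap in your argument is in the second paragraph. You want to show that a generator $\bv$ of $\pi_n^* \calF$, viewed as an element of the convergent module $\calE_{\mathrm{conv}}$, actually lies in the overconvergent module $\calE$. The fixed-point equations you have are $F_i(\sigma_i^* \bv) = \bv$ for $i < n$, and the log-convexity estimate of Lemma~\ref{L:fully faithful2} applied to $\varphi_i$ controls the growth of $\bv$ only in the $Y_i$-direction, because $\varphi_i$ acts trivially on the coordinates of $X_n$. You obtain no information whatsoever about extension across the boundary $Y_n \setminus X_n$, and that is precisely the direction needed to conclude that $\calF$ is overconvergent on $X_n$. Restricting along a section $x' \in X'$ does not help: you land in $\calE|_{\{x'\} \times X_n}$, which is overconvergent, but $\calF$ sits inside it only as a \emph{convergent} subobject, and convergent subobjects of overconvergent $F$-isocrystals are not overconvergent in general (think of the generic unit-root sub-$F$-isocrystal of an object with nonconstant Newton polygon, or more subtly of slope filtrations before one invokes the $p$-adic local monodromy theorem).

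Conceptually, what you are attempting is to deduce finiteness of overconvergent pushforward from the convergent pushforward plus full faithfulness. This cannot work: full faithfulness (Lemma~\ref{L:fully faithful2}) transports \emph{morphisms} between already-overconvergent objects, not overconvergence of objects themselves. Producing an overconvergent $\pi_{n*}$ genuinely requires the finiteness of higher direct images in rigid cohomology, which is the content of the cited theorem and is not accessible by the estimate you invoke. Your arguments for the left adjoint and for part (b) both rest on the existence of the overconvergent $\pi_{n*}$, so they inherit the same gap.
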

\begin{proof}
Apply \cite[Theorem~7.3.3]{kedlaya-fin}.
\end{proof}

\begin{lemma} \label{L:product overconvergent}
Suppose that $\calE$ is absolutely irreducible. Then 
there exist absolutely irreducible objects $\calF \in \PhiaIsoc(X'), \calG \in \FaIsoc(X_n)$  such that $\calE \cong \calF \boxtimes \calG$.
\end{lemma}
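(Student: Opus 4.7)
The approach is to mirror the proof of Lemma~\ref{L:convergent decomposition2} in the overconvergent setting, substituting each convergent input by its overconvergent analogue: Theorem~\ref{T:relative DM overconvergent} for fiberwise slicing, Lemma~\ref{L:pushforward2} for the adjoint pair $(\pi_n^*, \pi_{n*})$ together with its base change property over the open dense subscheme $V_n \subseteq X_n$, and Lemma~\ref{L:fully faithful2} to transport cohomological comparisons back to the convergent category when convenient.

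After enlarging $a$ as in Remark~\ref{R:reduce to first power} so that joint slopes become integers, I pick a geometric point $\overline{x}_n \to V_n$ and apply Theorem~\ref{T:relative DM overconvergent} to the pullback $\calE|_{X' \times_{\FF_p} \overline{x}_n}$. After twisting away the resulting $\calO(d)$ factors, this yields a direct sum decomposition in $\PhiaIsoc^\dagger(X')$. I select a nonzero summand and an absolutely irreducible subobject $\calF \in \PhiaIsoc^\dagger(X')$ inside it, together with a nonzero morphism $\calF \hookrightarrow \calE|_{X' \times_{\FF_p} \overline{x}_n}$.

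I then set
\[
\calG := \pi_{n*}\bigl(\pi_n^{\prime *}(\calF^\dual) \otimes \calE\bigr) \in \FaIsoc^\dagger(X_n),
\]
where $\pi_n^{\prime *}$ denotes pullback from $X'$, and invoke Lemma~\ref{L:pushforward2}(b) to identify the fiber of $\calG$ at $\overline{x}_n$ with $\Hom_{\PhiaIsoc^\dagger(X')}(\calF, \calE|_{X' \times_{\FF_p} \overline{x}_n})$, which is nonzero by the previous step; hence $\calG \neq 0$. The counit of adjunction followed by contraction produces a natural morphism $\calF \boxtimes \calG \to \calE$ whose restriction to $\overline{x}_n$ remains nonzero, and absolute irreducibility of $\calE$ forces it to be surjective.

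To finish, I replace $\calG$ by $\calG/\calG_0$, where $\calG_0 \subseteq \calG$ is the maximal subobject killed by the map to $\calE$; exactness of $\calF \boxtimes (-)$ then makes $\calF \boxtimes (\calG/\calG_0) \to \calE$ an isomorphism, and the absolute irreducibility of $\calF$ together with that of $\calE$ forces $\calG/\calG_0$ to be absolutely irreducible, since any proper subobject of it (at any level of coefficients, cf.\ Remark~\ref{R:reduce to first power}) would yield a proper subobject of $\calE$ by tensoring with $\calF$. The main obstacles I foresee are: (i) ensuring that the base change of Lemma~\ref{L:pushforward2}(b) applies at the chosen slicing point, which forces $\overline{x}_n$ to lie in $V_n$ and requires care in tracing the adjunction isomorphism; and (ii) verifying that $\FaIsoc^\dagger(X_n)$ is sufficiently noetherian for $\calG_0$ to exist as a maximal subobject and that $\calF \boxtimes (-)$ is exact in the overconvergent setting, both of which should follow from local calculations on formal lifts as in the construction of $\PhiaIsoc^\dagger(X)$.
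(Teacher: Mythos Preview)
Your overall strategy mirrors the paper's---slice $\calE$ via Theorem~\ref{T:relative DM overconvergent}, build $\calG$ via $\pi_{n*}$, and produce a map $\calF \boxtimes \calG \to \calE$---but the endgame has a genuine gap. From the surjection $\calF \boxtimes \calG \twoheadrightarrow \calE$ you pass to $\calF \boxtimes (\calG/\calG_0) \to \calE$ and assert this is an isomorphism by ``exactness of $\calF \boxtimes (-)$''. Exactness only tells you the map is well-defined and still surjective; it says nothing about injectivity, because there is no reason the kernel of $\calF \boxtimes \calG \to \calE$ should equal $\calF \boxtimes \calG_0$. A subobject of $\calF \boxtimes \calG$ in $\PhiaIsoc^\dagger(X)$ is not a priori of the form $\calF \boxtimes (\text{subobject of }\calG)$; proving this would require something like ``$\pi_{n*}(\pi_n^{\prime *}(\calF^\dual)\otimes -)$ recovers the second factor'', which in turn needs the base-change property of $\pi_{n*}$---precisely what Lemma~\ref{L:pushforward2}(b) supplies only over $V_n$, not globally. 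Your subsequent irreducibility claim for $\calG/\calG_0$ presupposes this isomorphism and is therefore also unjustified. The paper avoids the issue entirely: it first arranges both $\calF$ and $\calG'$ to be absolutely irreducible (the latter by slicing $\calE$ in the \emph{other} direction, at a product of geometric points $\overline{x}' \to X'$, to produce an object $\calH \in \FaIsoc^\dagger(X_n)$ whose restriction to $V_n$ is $\calG$-isotypic, and then taking an absolutely irreducible quotient of $\calH$). With both factors absolutely irreducible, the computation $H^0(X,(\calF\boxtimes\calG')^\dual\otimes(\calF\boxtimes\calG'))\cong\QQ_{p^a}$ shows $\calF\boxtimes\calG'$ is itself absolutely irreducible, so the surjection to $\calE$ is forced to be an isomorphism by Schur.

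There is also the circularity you flag as obstacle (i): you pick $\overline{x}_n \in V_n$, but $V_n$ in Lemma~\ref{L:pushforward2}(b) depends on the object $\pi_n^{\prime *}(\calF^\dual)\otimes\calE$, which depends on $\calF$, which depends on the slice at $\overline{x}_n$. The paper breaks this loop by invoking Lemma~\ref{L:same slices} (transported to the overconvergent side via Lemma~\ref{L:fully faithful2}): the $r$-slice is the same over every geometric point of $U_n$, so one can fix $\calE'_r$ first, then choose $V_n$ for the resulting object, and only afterwards appeal to base change at points of $V_n$. You should invoke this explicitly rather than leaving it as an unresolved obstacle.
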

\begin{proof}
As per Remark~\ref{R:reduce to first power}, we are free to increase $a$. Using Theorem~\ref{T:relative DM overconvergent} in place of Theorem~\ref{T:relative DM},
for $r \in \ZZ$
we may define the \emph{$r$-slice} of $\calE$ over a geometric point $\overline{x}_n$ of $X_n$.
By Lemma~\ref{L:fully faithful} plus Lemma~\ref{L:same slices}, the $r$-slices of $\calE$ over any two geometric points of $U_n$ are isomorphic.
By increasing $a$, we may ensure that there exists $r \in \ZZ$ for which one of these slices, which we denote by $\calE'_r$, is nonzero.
Choose $V_n \subseteq U_n$ as in Lemma~\ref{L:pushforward2};
we can then define $\calG = \pi_{n*}(\pi_n^{\prime *}(\calE_r^{\prime \dual}) \otimes \calE) \in \FaIsoc^\dagger(V_n)$.
Arguing as in the proof of Lemma~\ref{L:convergent decomposition}, 
we obtain a nonzero map $\calE'_r \boxtimes \calG \to \calE|_{X' \times_{\FF_p} V_n}$.

We now argue as in the proof of Lemma~\ref{L:convergent decomposition2}.
Pulling back from $X'$ to a product of geometric points $\overline{x}'$,
we obtain a surjective morphism $\calF|_{\overline{x}'} \boxtimes \calG \to \calE|_{\overline{x}' \times_{\FF_p} V_n}$.
By decomposing $\calE|_{\overline{x}' \times_{\FF_p} X_n}$ according to Theorem~\ref{T:relative DM overconvergent},
we obtain a surjective morphism $\calG^{\oplus r} \to \calH|_{V_n}$ in $\FaIsoc^\dagger(V_n)$
for some nonzero $\calH \in \FaIsoc(X_n)$. In particular, all of the constituents of $\calH$ in $\FaIsoc^\dagger(V_n)$
are isomorphic to $\calG$, so there exists a surjection $\calH|_{U_n} \to \calG$. 
We may thus replace $\calG$ with an absolutely irreducible quotient $\calG'$ of $\calH$ in $\FaIsoc^\dagger(X_n)$,
then apply Lemma~\ref{L:fully faithful2} to extend the resulting morphism $\calF \boxtimes (\calG'|_{V_n}) \to \calE|_{X' \times_{\FF_p} V_n}$ to a morphism $\calF \boxtimes \calG' \to \calE$. As in the proof of Lemma~\ref{L:convergent decomposition2}, this map is an isomorphism.
\end{proof}

\begin{theorem} \label{T:product overconvergent}
Any absolutely irreducible object in $\PhiaIsoc^\dagger(X)$ is isomorphic to
$\calE_1 \boxtimes \dots \boxtimes \calE_n$ for some absolutely irreducible objects $\calE_i \in \FaIsoc^\dagger(X_i)$.
\end{theorem}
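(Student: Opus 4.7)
The plan is to proceed by induction on $n$, in direct parallel with the derivation of Theorem~\ref{T:convergent decomposition} from Lemma~\ref{L:convergent decomposition2}. The base case $n = 1$ is tautological: $\PhiaIsoc^\dagger(X_1)$ is just $\FaIsoc^\dagger(X_1)$, so any absolutely irreducible object is already its own one-factor external product. For the inductive step, given an absolutely irreducible $\calE \in \PhiaIsoc^\dagger(X)$, apply Lemma~\ref{L:product overconvergent} (the overconvergent analogue of Lemma~\ref{L:convergent decomposition2}) to obtain an isomorphism $\calE \cong \calF \boxtimes \calG$ with $\calF \in \PhiaIsoc^\dagger(X')$ and $\calG \in \FaIsoc^\dagger(X_n)$ both absolutely irreducible, where $X' := X_1 \times_{\FF_p} \cdots \times_{\FF_p} X_{n-1}$. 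Invoking the inductive hypothesis on $\calF$ then yields an isomorphism $\calF \cong \calE_1 \boxtimes \cdots \boxtimes \calE_{n-1}$ with each $\calE_i \in \FaIsoc^\dagger(X_i)$ absolutely irreducible, and setting $\calE_n := \calG$ finishes the argument.

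In this sense, the theorem is entirely formal once Lemma~\ref{L:product overconvergent} is in hand; no new estimates or constructions are needed at this stage. The genuine work has already been discharged inside Lemma~\ref{L:product overconvergent}, where the overconvergent setting forces three substitutions relative to the convergent case: one uses Theorem~\ref{T:relative DM overconvergent} in place of Theorem~\ref{T:relative DM} to define slices over geometric points of $X_n$; one uses Lemma~\ref{L:pushforward2} rather than Lemma~\ref{L:pushforward} to construct the pushforward, accepting the weaker base-change statement which forces a restriction to a dense open $V_n \subseteq U_n$; and one invokes Lemma~\ref{L:fully faithful2} (full faithfulness of restriction from overconvergent to convergent $\Phi^a$-isocrystals) together with Lemma~\ref{L:fully faithful} to propagate the isomorphism built over $V_n$ back out to all of $X_n$.

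Consequently, the main obstacle is not in the proof of the theorem itself but in the infrastructure: the Dieudonn\'e--Manin decomposition in the overconvergent setting (Theorem~\ref{T:relative DM overconvergent}), the existence of an overconvergent pushforward with a reasonable base-change property (Lemma~\ref{L:pushforward2}), and full faithfulness of the restriction $\PhiaIsoc^\dagger \to \PhiaIsoc$ (Lemma~\ref{L:fully faithful2}). With those tools available, the induction collapses to a one-line argument, so the proof I would write is simply: by Lemma~\ref{L:product overconvergent} and induction on $n$.
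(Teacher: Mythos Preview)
Your proposal is correct and matches the paper's proof exactly: the paper also derives the theorem from Lemma~\ref{L:product overconvergent} by induction on $n$, with no further argument. Your summary of how Lemma~\ref{L:product overconvergent} adapts the convergent proof (substituting Theorem~\ref{T:relative DM overconvergent}, Lemma~\ref{L:pushforward2}, and Lemma~\ref{L:fully faithful2} for their convergent counterparts) is also accurate.
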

\begin{proof}
This follows from Lemma~\ref{L:product overconvergent} by induction on $n$.
\end{proof}

\begin{remark}
Echoing Remark~\ref{R:convergent Tannakian}, in the case where the $k_i$ are all finite (which is the only case needed for the application to geometric Langlands) one can reformulate
Theorem~\ref{T:product overconvergent} as a statement about products of Tannakian fundamental groups. In addition, one can use cohomological methods to establish this result directly,
without reduction to the convergent case. See \cite[\S 1]{kedlaya-xu}.
\end{remark}


\begin{thebibliography}{10}

\bibitem{abe-companion}
T. Abe, Langlands correspondence for isocrystals and existence of
crystalline companion for curves, \textit{J. Amer. Math. Soc.} \textbf{31} (2018), 921--1057.

\bibitem{bckl}
J. Bella\"iche, G. Chenevier, C. Khare, and M. Larsen, Converging sequences of $p$-adic Galois representations
and density theorems, \textit{Int. Math. Res. Notices} \textbf{2005}, 3691--3720.

\bibitem{bhatt-mathew}
B. Bhatt and A. Mathew, The arc-topology, \textit{Duke Math. J.} \textbf{170} (2021), 1899--1988.

\bibitem{ckz}
A.T. Carter, K.S. Kedlaya, and G. Z\'abr\'adi, 
Drinfeld's lemma for perfectoid spaces and overconvergence of multivariate $(\varphi, \Gamma)$-modules, \textit{Doc. Math.} \textbf{26}, 1329--1393. 

\bibitem{crew-f}
R. Crew, $F$-isocrystals and $p$-adic representations,
\textit{Proc. Symp. Pure Math.} \textbf{46} (1987), 111--138.

\bibitem{drinfeld}
V.G. Drinfeld,
Langlands’ Conjecture for $\GL(2)$ over functional Fields, 
Proceedings of the International Congress of Mathematicians, Helsinki, 565--574 (1978).

\bibitem{fargues-scholze}
L. Fargues and P. Scholze, 
Geometrization of the local Langlands correspondence,
arXiv:2102.13459v3 (2024).

\bibitem{grubb-kedlaya-upton}
T. Grubb, K.S. Kedlaya, and J. Upton, A cut-by-curves criterion for overconvergence of $F$-isocrystals, arXiv:2202.03604v1 (2022). 

\bibitem{ivanov}
A. Ivanov, Arc-descent for the perfect loop functor and $p$-adic Deligne--Lusztig spaces, \textit{J. reine angew. Math.} \textbf{794} (2023), 1--54.

\bibitem{katz-modular}
N. Katz, $p$-adic properties of modular schemes and modular forms,
\textit{Modular Functions of One Variable III}, Lecture Notes in Math. 350,
Springer-Verlag, 1973, 69--190. 

\bibitem{katz-slope}
N. Katz, Slope filtrations of $F$-crystals, Journ\'ees de G\'eom\'etrie Alg\'ebriques
(Rennes, 1978), \textit{Ast\'erisque} \textbf{63} (1979), 113--164.

\bibitem{kedlaya-slope-filt}
K.S. Kedlaya, Slope filtrations revisited, \textit{Doc. Math.} \textbf{10} (2005), 447--525.

\bibitem{kedlaya-fin}
K.S. Kedlaya,
Finiteness of rigid cohomology with coefficients, \textit{Duke Math. J.} \textbf{134} (2006), 15--97.

\bibitem{kedlaya-rel}
K.S. Kedlaya, Slope filtrations for relative Frobenius, \textit{Ast\'erisque} 319 (2008), 259--301.

\bibitem{kedlaya-goodformal2}
K.S. Kedlaya, Good formal structures for flat meromorphic connections, II: excellent schemes,
\textit{J. Amer. Math. Soc.} \textbf{24} (2011), 183--229.

\bibitem{kedlaya-aws}
K.S. Kedlaya, Sheaves, stacks, and shtukas, in
\textit{Perfectoid Spaces: Lectures from the 2017 Arizona Winter School},
Math. Surveys and Monographs 242, American Mathematical Society, 2019.

\bibitem{kedlaya-ainf}
K.S. Kedlaya, Some ring-theoretic properties of $\mathbf{A}_{\mathrm{inf}}$, in \textit{$p$-adic Hodge Theory}, Simons Symposia, Springer, 2020, 129--141.

\bibitem{kedlaya-simpleconn}
K.S. Kedlaya, Simple connectivity of Fargues--Fontaine curves, 
\textit{Annales Henri Lebesgue} \textbf{4} (2021), 1203--1233. 

\bibitem{kedlaya-isocrystals}
K.S. Kedlaya, Notes on isocrystals,
\textit{J. Number Theory} \textbf{237} (2022), 353--394.

\bibitem{kedlaya-book}
K.S. Kedlaya, \textit{$p$-adic Differential Equations}, second edition, 
Cambridge Studies in Advanced Math. 199,
Cambridge Univ. Press, Cambridge, UK, 2022.

\bibitem{kedlaya-xu}
K.S. Kedlaya and D. Xu, Drinfeld's lemma for $F$-isocrystals, II: Tannakian approach, \textit{Compos. Math.} \textbf{160} (2024), 90--119.

\bibitem{lafforgue-l}
L. Lafforgue, Chtoucas de Drinfeld et correspondance de Langlands,
\textit{Invent. Math.} \textbf{147} (2002), 1--241.

\bibitem{lafforgue-v}
V. Lafforgue, Chtoucas pour les groupes r\'eductifs et param\'etrisation de Langlands globale,
\textit{J. Amer. Math. Soc.} \textbf{31} (2018), 719--891.

\bibitem{lang}
S. Lang, \textit{Algebra}, revised third edition, Volume 1,
Graduate Texts in Math. 211, Springer-Verlag, 2002.

\bibitem{lau}
E. Lau, On generalized $\mathcal{D}$-shtukas, PhD thesis, University of Bonn, 2004,
\url{http://bib.math.uni-bonn.de/downloads/bms/BMS-369.pdf}.

\bibitem{lestum}
B. Le Stum, \textit{Rigid Cohomology}, Cambridge Tracts in Math. 172, Cambridge Univ. Press, 2007.

\bibitem{manin}
Yu. I. Manin, The theory of commutative formal groups over fields of finite characteristic, \textit{Russian Math. Surveys} \textbf{18} (1963), 1--83.

\bibitem{rydh}
D. Rydh, Submersions and effective descent of \'etale morphisms, \textit{Bull. Soc. Math. France} \textbf{138} (2010), 181--230.

\bibitem{stacks-project}
The Stacks Project Authors, \textit{Stacks Project},
\url{http://stacks.math.columbia.edu}, retrieved March 2019.


\end{thebibliography}
\end{document}